\newcommand{\bet}{\noalign{\vskip6pt plus 3pt minus 1pt}}
\newcommand{\betless}{\noalign{\vskip3pt plus 3pt minus 1pt}}
\renewcommand{\lor }{\longrightarrow}
\renewcommand{\O }{\Omega }
\newtheorem{Theorem}{Theorem}
\newtheorem{lema}{Lemma}
\newcounter{remark}
\def\theremark {\arabic{remark}}
\newenvironment{remark}{\refstepcounter{remark}\par\noindent{\bf Remark\ \theremark}\ }{\par}
\newtheorem{Proof}{Proof}
\newenvironment{proof}{\begin{Proof}\rm}{\hfill $\Box$ \end{Proof}}
\begin{document}
\title{Optimal error bounds for two-grid schemes applied to the Navier-Stokes equations}
\author{Javier de Frutos\thanks{Departamento de Matem\'{a}tica Aplicada,
Universidad de Valladolid. Spain. Research supported by Spanish MICINN under grant MTM2010-14919,
and by JCyL grant VA001A10-1 (frutos@mac.uva.es)} \and Bosco Garc\'{\i}a-Archilla\thanks{Departamento de
Matem\'atica Aplicada II, Universidad de Sevilla, Sevilla, Spain. Research supported by Spanish
MICINN under grant MTM2009-07849 (bosco@esi.us.es)}
  \and Julia Novo\thanks{Departamento de
Matem\'aticas, Universidad Aut\'onoma de Madrid, Instituto de Ciencias Matem\'aticas
CSIC-UAM-UC3M-UCM, Spain. Research supported by Spanish MICINN under grant MTM2010-14919
(julia.novo@uam.es)}} \maketitle \abstract{We consider two-grid mixed-finite element schemes for
the spatial discretization of the incompressible Navier-Stokes equations. A standard mixed-finite
element method is applied over the coarse grid to approximate the nonlinear Navier-Stokes equations
while a linear evolutionary problem is solved over the fine grid. The previously computed Galerkin
approximation to the velocity is used to linearize the convective term. For the analysis we take
into account the lack of regularity of the solutions of the Navier-Stokes equations at the initial
time in the absence of nonlocal compatibility conditions of the data. Optimal error bounds are
obtained.}


\section{Introduction}
In this paper we study two-grid mixed finite-element (MFE) methods for the spatial discretization
of the incompressible Navier--Stokes equations
\begin{eqnarray}
\label{onetwo}
u_t -\Delta u + (u\cdot\nabla)u + \nabla p &=& f,\\
{\rm div}(u)&=&0,\nonumber
\end{eqnarray}
in a bounded domain $\Omega\subset {\mathbb R}^d$ ($d=2,3$) with a smooth
boundary subject to homogeneous Dirichlet boundary conditions $u=0$
on~$\partial\Omega$. In~(\ref{onetwo}), $u$ is the velocity
field, $p$ the pressure, and~$f$ a given force field. As in \cite{heyran0}, \cite{heyran2}, \cite{HR-IV}
we assume that
the fluid density and viscosity have been normalized by an adequate change
of scale in space and time. We approximate the solution $u$ and $p$
corresponding to a given
initial condition
\begin{equation}\label{ic}
u(\cdot,0)=u_0.
\end{equation}
Two-grid methods are a well established technique for nonlinear steady problems, see \cite{Xu1}, \cite{Xu}.
The main idea in a two-level method
 involves the discretization of the equations over two meshes of different size.
  A nonlinear system over the coarse mesh is solved in the first step of the method.
  In a second step, a linearized equation based on the approximation over the coarse
  mesh is solved on the fine mesh.
In \cite{Lay-Tob}, \cite{Lay-Len} several two-level methods are considered to approximate the
steady Navier-Stokes equations. In these papers, depending on the algorithm, the second step is
based on the solution of a discrete Stokes problem, a linear Oseen problem or one step of the
Newton method over the fine mesh with the coarse mesh approximation as initial guess.

Several two-level or two-grid schemes have also been considered in the literature to
approximate the evolutionary nonlinear Navier-Stokes equations (\ref{onetwo})-(\ref{ic}).
Again, two approximations to the velocity (and correspondingly two approximations to the pressure),
are computed. One of them is defined by a discretization of the nonlinear equations over a coarse
mesh, $u_H$,  and another one, $\tilde u_h$, is defined by an appropriate linearization
over a fine mesh. Different classes of algorithms can be seen as two level methods.
In particular, although they were originally developed from different ideas, the so called nonlinear
Galerkin methods, postprocessed and dynamical postprocessed methods, fall into this category.

Postprocessed Galerkin methods were first introduced for Fourier spectral methods
in \cite{Bosco-Julia-Titi}, \cite{Bosco-Julia-Titi-2} (see also \cite{Margolin_titi_wynne}) and later extended
to finite element methods
in \cite{bbj}, \cite{bjj}, \cite{jbj_regularity}, \cite{jbj_fully}. In all these works the main idea is the following:
one first compute
the standard Galerkin approximation to the velocity and pressure over a coarse mesh $(u_H,p_H)$ of size $H$
and then compute
the postprocessed approximation in a finer mesh
 at selected times in which one wants to obtain an improved approximation. More precisely, the postprocessed
approximation $(\tilde u_h,\tilde p_h)$
computed at a given time $t^*$
is an approximation in a mesh of size $h\ll H$ to the following (steady) Stokes problem:
\begin{equation}
\begin{array}{rcl}
\left.\begin{array}{r@{}}
-\Delta \tilde u +\nabla \tilde p=f-\frac{d}{dt}u_H({t^*})
-(u_H({t^*})\cdot\nabla) u_H({t^*})
\\
\betless
{\rm div}(\tilde u)=0
\end{array}\right\}&\quad \hbox{\rm in~$\Omega$},
\\
\bet
&\hspace*{-41pt}\tilde u=0, \quad \hbox{\,\rm on~$\partial\Omega$}.
\end{array}
\label{eq:stokes}
\end{equation}
Here, $u_H(t)$, $t\in (0,T]$, is the standard MFE approximation computed in the coarse mesh in a time interval
$(0,T]$ and $t^*\in (0,T]$. 
Note that the computation of $(u_H(t), p_H(t))$, $t\in (0,T]$,
is completely independent of the computation of
$(\tilde{u}_h(t^*),\tilde{p}_h(t^*))$ in the fine mesh.
The postprocessed approximation improves the rate of convergence of the standard Galerkin approximation
over the coarse mesh in the following sense.
If the rate of convergence of the Galerkin approximation to the velocity in the $L^2(\Omega)^d$ ($j=0$)
or $H^1(\Omega)^d$ ($j=1$) norm is $O(H^{r-j})$ then
the rate of convergence of the postprocessed approximation to the velocity is
$O(H^{r+1-j}\left|\log(H)\right|)$+$O(h^{r-j})$. Analogous results
are obtained for the pressure. For first order mixed finite element methods the improvement in the rate of convergence
of the velocity is only achieved in the $H^1(\Omega)^d$ norm, \cite{bjj}.
Then, if one wants to achieve the optimal accuracy of the fine
level in the $H^1(\Omega)^d$ norm, one can first compute the Galerkin approximation
on a coarse mesh of size $H=h^{(r-1)/r}$ and
then compute the postprocessed approximation over the fine mesh of size $h$ at the desired time levels.
For example, one should take $H=h^{1/2}$ and~$H=h^{2/3}$
for linear and quadratic mixed finite elements, respectively. It can be expected
that the computational cost of the postprocessed approximation is smaller than that of
the Galerkin approximation on the same fine mesh, since in the former method
the time evolution is done on the coarse mesh, and only at selected time levels are
computations done on the fine mesh. This has been confirmed by the numerical
experiments in~\cite{bbj} (see also~\cite{novo3} and \cite{Bosco-Julia-Titi})

In \cite{Margolin_titi_wynne} a related
algorithm, the so-called dynamical postprocessing, is introduced for the Fourier case.
In this algorithm, the standard Galerkin approximation,  $(u_H,p_H)$,
is computed over a coarse mesh in the first level, as before. For the second level an approximation to
a linear evolutionary problem, instead of the steady problem (\ref{eq:stokes}), is computed. More precisely,
the dynamical postprocessing involves the approximation, at each time step, over a mesh of size $h\ll H$ of the problem:
\begin{equation}
\begin{array}{rcl}
\left.\begin{array}{r@{}}
\frac{d}{dt}\tilde u-\Delta \tilde u +\nabla \tilde p=f
-(u_H\cdot\nabla) u_H
\\
\betless
{\rm div}(\tilde u)=0
\end{array}\right\}&\qquad \hbox{\rm in~$\Omega$},
\\
\bet
&\hspace*{-41pt}\tilde u=0, \quad \hbox{~~~~~~~~~~\,\rm on~$\partial\Omega$}.
\end{array}
\label{eq:dyn}
\end{equation}
Note that in the dynamical postprocessing, the computation of
$(u_H(t), p_H(t))$ and $(\tilde{u}_h(t),\tilde{p}_h(t))$, $t\in [0,T]$,
is coupled.
The rate of convergence of the dynamical postprocessing scheme is proved in \cite{Margolin_titi_wynne}
to be the same as the rate of
convergence of the standard postprocessing. In the case of highly oscillatory solutions the dynamical
algorithm is shown to be more
efficient than the standard postprocessing in some one dimensional examples. The dynamical postprocessing
method is also considered in \cite{Olshanskii},
named now as two-level method, in the case of mixed finite elements.
In~\cite{Olshanskii}, the author treats the fully discrete case integrating in time
with the backward Euler method. A similar two-level scheme is also considered and analyzed in \cite{He} where
the author uses first order mixed finite elements in space,  Crank-Nicolson extrapolation for the
time integration over the coarse mesh and the backward Euler method for the time integration over the fine mesh.

The so-called nonlinear Galerkin methods are also two-level methods that have been used to compute
approximations to (\ref{onetwo})-(\ref{ic}).
They were first introduced for Fourier spectral methods \cite{Devulder_marion_titi},
\cite{Marion-Temam}, and later extended to mixed finite element
methods in \cite{Ait_etal}. In this work the authors obtain the rate of convergence of the nonlinear Galerkin method
in the case of first order elements. The rate of convergence is the same one of the postprocessed method.
The main difference between the nonlinear Galerkin methods and the postprocessed or two-grid methods
is that in the former  the approximation on the coarse mesh
takes into account the influence of the fine mesh, whereas in the latter
it is just the standard
Galerkin method (i.e., computed independently of the fine mesh).

In this paper we analyze two two-grid algorithms in the context of spatial mixed finite element
discretizations to approximate
the solutions of  (\ref{onetwo})-(\ref{ic}). The two algorithms we consider
 are very similar to
the dynamical postprocessing method. The difference is the treatment of the nonlinearity in the second level. In the
dynamical postprocessing method the nonlinear convective term of the fine level is approximated by
the data $(u_H\cdot \nabla)u_H$ (see  the
right-hand-side of (\ref{eq:dyn})).  In the two algorithms
we consider in the present paper, the approximation to the velocity of the coarse mesh $u_H$ is used to linearize
the nonlinear convective term of the
 fine level. In the first algorithm, the linearized convective term of the fine level is $(u_H\cdot \nabla)\tilde u_h$.
In the second algorithm $u_H$ is regarded as an initial guess to perform one Newton step in the fine level.
For the spatial discretization we consider mixed finite elements of
first, second and third order. More precisely, we consider the mini-element and the quadratic and cubic Hood-Taylor
elements. The analysis for other mixed finite elements of the same order is similar.
As in \cite{heyran2}, \cite{jbj_regularity} due to the lack of regularity at $t=0$ of the solution
of (\ref{onetwo})-(\ref{ic}) no better than $O(H^5)$ error bounds
can be expected. For this reason we do not analyze higher than cubic finite element discretizations.
For the temporal discretization we
use the backward Euler method or the two-step backward differentiation formula.
The analysis of the fully discrete methods is similar to the one appeared in \cite{jbj_fully}
and it is only briefly indicated in this paper.

 This is
not the first time these two algorithms have been considered. The first algorithm was introduced
in \cite{Girault-Lions}, where
the authors analyze the semi-discrete in space case for first order finite elements.
In \cite{Abboud_Saya} the authors
 extend this analysis to the fully discrete case and in \cite{Abboud_etal} the second order Hood-Taylor
 finite element is used
 for the spatial discretization and the two-step backward differentiation formula for the time integration.
 In \cite{Hou_etal} the second algorithm is analyzed for the Fourier
spectral case while in \cite{Liu_Hou} the analysis is extended to the case of first order mixed finite
elements considering the fully discrete
case coupled with the Crank-Nicolson scheme for the time integration.
As opposed to the above mentioned works on the same methods, in the present paper
we take into account the lack of regularity suffered by the solutions of the Navier-Stokes
  equations at the initial time. Then, for the analysis in the present paper we do not assume
more than second-order spatial derivatives bounded in $L^2$ up to initial time $t=0$,
since demanding further regularity requires the data to satisfy nonlocal
compatibility conditions unlikely to be fulfilled in practical situations
\cite{heyran0}, \cite{heyran2}.  This is the first time these methods are analyzed under realistic regularity assumptions.
Also, this
is the first time the cubic case is considered and the first time the quadratic case is considered for the second method.

There are
some other improvements with respect to previous works. In \cite{Abboud_etal} the authors get an error bound of
order $O(H^3+h^2+(\Delta t)^2)$ for the fine approximation to the velocity $\tilde u_h$ in the $H^1(\Omega)^d$ norm
whenever the following inequality is satisfied
 $
 \alpha_1 H^3\le (\Delta t)^2\le \alpha_2 H^3,
 $
  $\alpha_1$ and  $\alpha_2$ being constants independent of $H$ and $\Delta t$. With the technique of this paper
  an error bound of order
  $O(|\log(h)||\log(H)|H^4+h^2+(\Delta t)^2)$ for the same fully discrete method in the $H^1(\Omega)^d$ norm
  can be obtained for $H$ and $\Delta t$ independently chosen.
   With the new error bound obtained in this paper one can achieve the rate of convergence of the fine mesh in
  the $H^1(\Omega)^d$ norm by taking $H=h^{1/2}$ instead of $H=h^{2/3}$. This fact improves the efficiency of
  the method compared with
  the (same order) standard Galerkin method over the fine mesh. Also, the authors of \cite{Abboud_etal} remark
  that they have observed the same rate of convergence for the two-grid method with $H=h^{1/2}$ and $H=h^{2/3}$ in
  the numerical tests they have carried out, which supports the improved rate of convergence we obtain in this paper.
  We want to remark that in all the numerical
   experiments of \cite{Hou_etal}, \cite{Liu_Hou} and \cite{Abboud_etal} the two-grid algorithms improve the efficiency
   of the standard Galerkin method in the sense that a given error can be achieved with less computational cost with the
   new algorithms than with the standard Galerkin method. In \cite{Hou_etal} a comparison in the Fourier case between the
   standard postprocessing, the dynamical postprocessing and the second two-grid algorithm is also included.
   Although the computational
   cost of the two-grid approximation over the fine mesh is bigger than that of the postprocessed approximations, the
   two-grid algorithm produces smaller errors in the case of moderate to high Reynolds numbers. Finally,
comparing the two algorithms we analyze in this paper we remark that with the second algorithm better error bounds are
obtained in terms of $H$. Although this fact could make the choice of the second algorithm preferable for computations,
it turns out in practice to be rather inefficient to solve the linear systems accurately.
For this reason, some
authors suggest solving instead an Oseen problem leading then to the first algorithm, see \cite{Lay-Tob}.

The rest of the paper is as follows. In Section 2 we introduce some preliminaries and notations. In Section 3 we carry out the error
analysis of the first two-grid algorithm in the semi-discrete in space case. In Section 4 we consider the analysis of the
second two-grid algorithm in the semi-discrete in space case.  Finally, in Section 5 we consider the fully discrete case integrating in time with the backward Euler method or
the two-step backward differentiation formula.


\section{Preliminaries and notations}
We will assume that
$\Omega$ is a bounded domain in~${\mathbb{R}}^{d},\, d=2,3$, not
necessarily convex and smooth enough. When dealing with linear elements
($r=2$ below) $\Omega$ may also be a convex polygonal or polyhedral domain.
We will consider
the Hilbert spaces
\begin{align*}
 H&=\{ u \in \big(L^{2}(\Omega))^d \mid\mbox{div}(u)=0, \, u\cdot n_{|_{\partial \Omega}}=0 \},\\
\bet
V&=\{
u \in \big(H^{1}_{0}(\Omega))^d \mid \mbox{div}(u)=0 \},
\end{align*}
endowed
with the inner product of $L^{2}(\Omega)^{d}$ and $H^{1}_{0}(\Omega)^{d}$,
respectively. For $l\ge 0$ integer and $1\le q\le \infty$, we consider the
standard Sobolev spaces $W^{l,q}(\Omega)^d$ of functions with
derivatives up to order $l$ in $L^q(\Omega)$, and
$H^l(\Omega)^d=W^{l,2}(\Omega)^d$. We will denote by $\|\cdot \|_l$ the norm in $H^l(\Omega)^d$,
and~$\|\cdot\|_{-l}$ will represent
the norm of its dual space. We consider also the quotient spaces
$H^l(\Omega)/{\mathbb R}$ with norm $\left\| p\right\|_{H^l/{\mathbb
R}}= \inf\{ \left\| p+c\right\|_l\mid c\in {\mathbb R}\}$.

Let us recall the following Sobolev's imbedding inequalities \cite{Adams}: For
$q \in [1, \infty)$, there exists a constant $C=C(\Omega, q)$ such
that
\begin{equation}\label{sob1}
\|v\|_{L^{q'}(\Omega)^{d}} \le C \| v\|_{W^{s,q}(\Omega)^{d}}, \,\,\quad
\frac{1}{q'}
\ge \frac{1}{q}-\frac{s}{d}>0,\quad q<\infty, \quad v \in
W^{s,q}(\Omega)^{d}.
\end{equation}
For $q'=\infty$, (\ref{sob1}) holds with $\frac{1}{q}<
\frac{s}{d}$.

Let $\Pi:
L^2(\Omega)^d \lor H$ be the $L^2(\Omega)^d $ projection onto $H$.
We denote by $A$ the Stokes
operator on $\Omega$:
$$
A: \mathcal{D}(A)\subset H \lor H, \quad \, A=-\Pi\Delta , \quad
\mathcal{D}(A)=H^{2}(\Omega)^{d} \cap V.
$$
Applying Leray's projector to
(\ref{onetwo}), the equations can be written in the form
\begin{eqnarray*}
u_t + A u +B(u,u) =\Pi f \qquad \mbox{ in } \O,
\end{eqnarray*}
where $B(u,v)=\Pi (u\cdot \nabla) v$ for $u$, $v$ in $H_0^1(\Omega)^d$.

We shall use the trilinear form $b(\cdot,\cdot,\cdot)$ defined by
$$
b(u,v,w)=(F(u,v),w)\quad\forall u,v,w\in H_0^1(\Omega)^d,
$$
where
$$
F(u,v)=(u\cdot \nabla) v +\frac{1}{2}(\nabla\cdot u)v\quad\forall u,v\in H_0^1(\Omega)^d.
$$
It is straightforward to verify that $b$ enjoys the skew-symmetry property
\begin{equation}\label{skew}
b(u,v,w)=-b(u,w,v) \quad \forall u,v,w\in H_0^1(\Omega)^d.
\end{equation}
Let us observe that $B(u,v)=\Pi F(u,v)$ for $u\in V,$ $v\in H_0^1(\Omega)^d$.

We shall assume that
$$
\left\|u(t)\right\|_1\le M_1,\quad
\left\|u(t)\right\|_2\le M_2,\qquad 0\le t\le T,
$$
and, for $k\ge 2$ integer,
$$
\sup_{0\le t\le T}
\bigl\| \partial_t^{\lfloor k/2\rfloor} f\bigr\|_{k-1-2{\lfloor k/2\rfloor}}+
\sum_{j=0}^{\lfloor (k-2)/2\rfloor}\sup_{0\le t\le T}
\bigl\| \partial_t^j f\bigr\|_{k-2j-2}
<+\infty,
$$
so that, according to Theorems~2.4 and~2.5 in \cite{heyran0},
there exist positive constants $M_k$ and $K_{k}$ such that for $k\ge 2$
\begin{equation}
\left\| u(t)\right\|_{k} + \left\| u_t(t)\right\|_{k-2}
+\left\| p(t)\right\|_{H^{k-1}/{\mathbb R}} \le M_k \tau(t)^{1-k/2}
\label{eq:u-inf}
\end{equation}
and for $\ k\ge 3$
\begin{equation}
\int_0^t
\sigma_{k-3}(s)
\bigr(\left\| u(s)\right\|_{k}^2 + \left\| u_s(s)\right\|_{k-2}^2
+\left\| p(s)\right\|_{H^{k-1}/{\mathbb R}}^2+
\left\| p_s(s)\right\|_{H^{k-3}/{\mathbb R}} ^2\bigl)\,ds\le
K_{k}^2,
\label{eq:u-int}
\end{equation}
where $\tau(t)=\min(t,1)$ and~$\sigma_n=e^{-\alpha(t-s)}\tau^n(s)$ for some
$\alpha>0$. Observe that, for $t\le T<\infty$, we can take~$\tau(t)=t$ and
$\sigma_n(s)=s^n$. For simplicity, we will take these values of
$\tau$ and~$\sigma_n$. We note that no further
than $k\le 6$ will be needed in the present paper.

Let $\mathcal{T}_{h}=(\tau_i^h,\phi_{i}^{h})_{i \in
I_{h}}$, $h>0$, be a family of partitions of suitable domains $\Omega_h$, where
$h$ is the maximum diameter of the elements $\tau_i^h\in \mathcal{T}_{h}$
and $\phi_i^h$ are the mappings of the reference simplex
$\tau_0$ onto $\tau_i^h$.
We restrict ourselves to quasi-uniform and regular meshes $\mathcal{T}_{h}$.

Let $r \geq 2$, we consider the finite-element spaces
\begin{eqnarray*}
S_{h,r}&=&\big\{ \chi_{h} \in \mathcal{C}(\overline{\Omega}_{h}) \, |
\, {\chi_{h}}{|_{\tau_{i}^{h}}}
\circ \phi^{h}_{i} \, \in \, P^{r-1}(\tau_{0}) \big\},
\nonumber\\
\bet
{S}_{h,r}^0&=&\big\{ \chi_{h} \in \mathcal{C}(\overline{\Omega}_{h}) \, | \,
{\chi_{h}}{|_{\tau_{i}^{h}}} \circ \phi^{h}_{i} \, \in \, P^{r-1}(\tau_{0}) , \, \, \chi_{h}
(x)=0 \,\, \forall\, x \in
\partial\Omega_{h} \big\},
\end{eqnarray*}
where $P^{r-1}(\tau_{0})$ denotes the space of polynomials
of degree at most $r-1$ on $\tau_{0}$. Since we are assuming that
the meshes are quasi-uniform, the
following inverse inequality holds
for each $v_{h} \in ({S}_{h,r}^0)^{d}$
(see, e.g., \cite[Theorem 3.2.6]{ciar0})
\begin{eqnarray}
\label{inv}
\| v_{h} \|_{W^{m,q}(\tau)^{d}} \leq C h^{l-m-d(\frac{1}{q'}-\frac{1}{q})}
\|v_{h}\|_{W^{l,q'}(\tau)^{d}},
\end{eqnarray}
where $0\leq l \leq m \leq 1$, $1\leq q' \leq q \leq \infty$, and $\tau$ is an element in the partition $\mathcal{T}_{h}$.

We shall denote by $(X_{h,r}, Q_{h,r-1})$
the so-called Hood--Taylor element \cite{BF,hood0}, when $r\ge 3$, where
$$
X_{h,r}=\left({S}_{h,r}^0\right)^{d},\quad
Q_{h,r-1}=S_{h,r-1}\cap L^2(\O_{h})/{\mathbb R},\quad r
\ge 3,
$$
and the so-called mini-element~\cite{Brezzi-Fortin91} when $r=2$,
where $Q_{h,1}=S_{h,2}\cap L^2(\O_{h})/{\mathbb R}$, and
$X_{h,2}=({S}_{h,2}^0)^{d}\oplus{\mathbb B}_h$. Here,
${\mathbb B}_h$ is spanned by the bubble functions $b_\tau$,
$\tau\in\mathcal{T}_h$, defined by
$b_\tau(x)=(d+1)^{d+1}\lambda_1(x)\cdots
 \lambda_{d+1}(x)$,  if~$x\in \tau$ and 0 elsewhere,
where  $\lambda_1(x),\ldots,\lambda_{d+1}(x)$ denote the
 barycentric coordinates of~$x$.
For these mixed elements a uniform inf-sup condition is satisfied
(see \cite{BF}); that is,
there exists a constant $\beta>0$ independent of the mesh grid size $h$ such that
\begin{equation}\label{lbbh}
\displaystyle \inf_{q_{h}\in Q_{h,r-1}}\displaystyle\sup_{v_{h}\in X_{h,r}}
\frac{(q_{h},\nabla \cdot v_{h})}{\|v_{h}\|_{1}
\|q_{h}\|_{L^2/{\mathbb R}}} \geq \beta.
\end{equation}
The approximate velocity belongs to the discretely
divergence-free space
$$
V_{h,r}=X_{h,r}\cap \Bigl\{ \chi_{h} \in H^{1}_{0}(\Omega_{h}) \mid
(q_{h}, \nabla\cdot\chi_{h}) =0 \quad\forall q_{h} \in Q_{h,r-1} \Bigr\}.
$$
We observe that, for the
Hood--Taylor element, $V_{h,r}$ is not a subspace of $V$.
Let $\Pi_{h}:L^{2}(\O )^{d}\lor V_{h,r}$ be the discrete Leray's
projection defined by
$$
(\Pi_{h}u,\chi_{h} )=(u,
\chi_{h})\quad \forall\chi_{h}\in V_{h,r}.
$$
We will use the following well known bounds
\begin{equation}
\left\|(I-\Pi_{h})u\right\|_j \le Ch^{l-j}\left\|u\right\|_l,\qquad
1\le l\le r,\quad j=0,1 \label{eq:error-Pi_h}.
\end{equation}
Assuming that $\Omega$ is has a smooth enough boundary, we also have
\begin{equation}
 \bigl\|A^{-m/2}\Pi(I-\Pi_{h})u\bigr\|_0\le Ch^{l+\min(m,r-2)}
\left\|u\right\|_l,\qquad 1\le l\le r,\quad m=1,2.\quad
\label{eq:error-A-1Pi_h}
\end{equation}
Since $(A_{h}^{-1/2} \Pi_{h} f, v_h)=(f,
 A_{h}^{-1/2}v_h)$, for all $v_h\in V_{h,r}$, it follows that
\begin{equation}\label{eq:nch11}
\|A_{h}^{-1/2} \Pi_{h} f\|_{0}\leq C\| f\|_{-1}.
\end{equation}
Moreover it holds for $f\in
L^2(\Omega)^d$, see \cite{jbj_regularity}:
\begin{eqnarray}
\|A_{h}^{-s/2}\Pi_{h}f\|_{0}&\leq& Ch^{s}\|f\|_{0}+\|A^{-s/2}\Pi f\|_{0}
\quad s=1,2.\label{nch1}
\end{eqnarray}
Let~$\mathcal{A}$ denote either $\mathcal{A}=A$ or~$\mathcal{A}=A_h$. Notice that
both are positive self-adjoint operators with compact resolvent in~$H$ and~$V_h$,
respectively.
Let us consider then for $\alpha\in {\mathbb R}$ and $t>0$ the operators
$\mathcal{A}^{\alpha}$ and~$e^{-t\mathcal{A}}$, which are defined
by means of the spectral properties of~${\cal A}$
(see, e.g., \cite[p.~33]{Constantin-Foias}, \cite{Fujita-Kato}).
An easy calculation shows that
\begin{equation}\label{lee1}
\|\mathcal{A}^{\alpha}e^{-t\mathcal{A}}\|_{0}\leq
(\alpha e^{-1})^\alpha
t^{-\alpha}, \qquad \alpha\ge 0,\ t>0,
\end{equation}
where, here and in what follows, $\left\|\cdot\right\|_0$ when applied to an
operator denotes the associated operator norm.
Also, using
the change of
variables $\tau=s/t$, it is easy to show that
\begin{equation}
\int_0^ts^{-1/2}\bigl\| A_h^{1/2}e^{-(t-s)A_h}\bigr\|_0\,ds
\le \frac{1}{\sqrt{2e}} B\left(\frac{1}{2},\frac{1}{2}\right),
\label{labeta}
\end{equation}
where $B$ is the Beta function (see, e.g., \cite{Abramowitz-Stegun}).


\section{Semi-discretization in space. The first two-grid algorithm.}
\label{se:3}
In this section we carry out the error analysis of the first two-grid algorithm for the Hood-Taylor
mixed finite element with $r=3$ or $4$. At the end of the section we include the results
that can be obtained for the mini-element with a similar but simpler analysis than the
one showed along the section.

The first algorithm we consider is the following. Let us choose $h<H$ so that
$V_{H,r}\subset V_{h,r}$. Then, in the first level we compute the
standard mixed finite-element approximation to (\ref{onetwo})--(\ref{ic}). That is,
given $u_H(0)=\Pi_H(u_{0})$, we compute
$u_{H}(t)\in X_{H,r}$ and $p_{H}(t)\in Q_{H,r-1}$, $t\in(0,T]$, satisfying, for all $\phi_{H} \in X_{H,r}$
and $\psi_{H} \in Q_{H,r-1}$
\begin{align}\label{ten}
(\dot u_{H}, \phi_{H})
+ ( \nabla u_{H}, \nabla \phi_{H}) + b(u_{H}, u_{H}, \phi_{H}) +
( \nabla p_{H}, \phi_{H}) & = (f, \phi_{H})\\
\bet
(\nabla \cdot u_{H}, \psi_{H}) & = 0.
\label{ten2}
\end{align}
In the second level we solve a linearized problem on a finer grid and given $\tilde u_h(0)=\Pi_hu_0$ we compute
$\tilde u_{h}(t)\in X_{h,r}$ and $\tilde p_{h}(t)\in Q_{h,r-1}$, $t\in(0,T]$, satisfying, for all $\phi_{h} \in X_{h,r}$
and $\psi_{h} \in Q_{h,r-1}$
\begin{align}\label{ten_p1}
(\dot  {\tilde u}_{h}, \phi_{h})
+ ( \nabla \tilde u_{h}, \nabla \phi_{h}) + (u_{H}\cdot\nabla \tilde u_{h}, \phi_{h}) +
( \nabla \tilde p_{h}, \phi_{h}) & = (f, \phi_{h})\\
\bet
(\nabla \cdot \tilde u_{h}, \psi_{h}) & = 0.
\label{ten2_p1}
\end{align}
To obtain the error bounds for $(\tilde u_h,\tilde p_h)$ we will follow the error analysis of \cite{jbj_regularity}
and introduce an auxiliary approximation (see~\cite[Section 4.1]{jbj_regularity}).
For a $u$ and $p$ solution of (\ref{onetwo})--(\ref{ic}) let us
consider the approximations
$v_h:[0,T]\longrightarrow X_{h,r}$ and $g_h:[0,T]\longrightarrow
Q_{h,r-1}$, respectively, solutions of
\begin{align}\label{tenv}
(\dot v_{h}, \phi_{h}) + ( \nabla v_{h}, \nabla \phi_{h}) +
( \nabla g_{h}, \phi_{h}) & = (f, \phi_{h}) - b(u, u, \phi_{h})\\
\bet
(\nabla \cdot v_{h}, \psi_{h}) & = 0,\label{ten2v}
\end{align}
for all $\phi_{h} \in X_{h,r}$ and $\psi_{h} \in
Q_{h,r-1}$,
with initial condition $v_h(0)=\Pi_h u_0$.
We will also use the following notation:
\begin{equation}\label{z_h}
z_h=\Pi_{h} u- v_h.
\end{equation}
Next, we state some lemmas that are needed in the proof of the main theorems.
The first one summarizes previous results.

\begin{lema}\label{cotas_gal}
Let $(u,p)$ be the solution
of {\rm (\ref{onetwo})--(\ref{ic})}. There exists a positive
constant $C$ such that the discrete velocity~$v_h$
defined by (\ref{tenv})-(\ref{ten2v}) and the Hood--Taylor
element approximation to $u$, $u_{H}$, satisfy the following bounds for
$j=0,1$, and~$t\in(0,T]$:
\begin{eqnarray}\label{supercuad}
 \|v_{H}(t)-u_{H}(t)\|_{j} &\le&
 \frac{C}{t^{(r-2)/2}}|\log(H)|
 H^{r+1-j},\quad  3\le r\le 4,\qquad
\\
\label{gal_cuad}
  \|u_{H}(t)-u(t)\|_{j} &\le&
 \frac{C}{t^{(r-2)/2}}
 H^{r-j},\quad 2\le r\le 5,
 \\
 \label{gal_int}
 \int_0^t \|u_{H}(s)-u(s)\|_{j}^2\,ds &\le& CH^{2(3-j)},\quad 3\le r\le 4.
\end{eqnarray}
\end{lema}
\begin{proof}
The bound  (\ref{supercuad}) is proved
in Theorems 4.7 and 4.15 in \cite{jbj_regularity}.
The case $j=0$ in (\ref{gal_int}) is proved in~\cite[Theorem 3.1]{heyran0}
and~\cite[Theorem 3.1]{heyran2}. The case $j=1$ follows from the case
$j=0$ by applying  (\ref{inv}) and~(\ref{eq:error-Pi_h}), see also Corollaries 4.8 and 4.16 in \cite{jbj_regularity}.
Finally, (\ref{gal_int}) is proved in~Lemmas~5.1 and~5.2 in~\cite{heyran2}.
\end{proof}
For the convenience of the reader, we will reproduce here the following two Lemmas,
the first one from \cite{bbj}
and the second one from \cite{jbj_regularity}.
\begin{lema}
\label{l4b}
For any $f \in C([0,T];L^{2}(\O)^{d})$, the
following estimate holds for all $t\in [0,T]$:
\begin{eqnarray*}
\int_{0}^{t} \big\|A_{h}e^{- (t-s)A_{h}}\Pi_{h}f(s)\big\|_{0}ds
\leq {C}
|\log(h)|\displaystyle\max_{0\leq t\leq T}\|f(t)\|_{0}.
\end{eqnarray*}
\end{lema}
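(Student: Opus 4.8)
The plan is to estimate the integral
$\int_0^t \|A_h e^{-(t-s)A_h}\Pi_h f(s)\|_0\,ds$
by splitting the time interval $(0,t)$ into two pieces, one near the singularity at $s=t$ and one away from it, and exploiting the smoothing estimate $\|A_h^\alpha e^{-\sigma A_h}\|_0 \le (\alpha e^{-1})^\alpha \sigma^{-\alpha}$ from~(\ref{lee1}) with $\alpha=1$. Concretely, I would write $\|A_h e^{-(t-s)A_h}\Pi_h f(s)\|_0 \le \|A_h e^{-(t-s)A_h}\|_0 \|\Pi_h f(s)\|_0 \le e^{-1}(t-s)^{-1}\|\Pi_h f(s)\|_0$, and since $\Pi_h$ is an $L^2$-orthogonal projection, $\|\Pi_h f(s)\|_0 \le \|f(s)\|_0 \le \max_{0\le s\le T}\|f(s)\|_0 =: \|f\|_\infty$. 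This reduces the bound to $e^{-1}\|f\|_\infty \int_0^t (t-s)^{-1}\,ds$, which is logarithmically divergent at the upper endpoint — so this crude estimate is not enough by itself, and the logarithm in the statement must come from cutting the integral off at a scale related to $h$.

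The key point is that for the piece of the integral with $t-s$ very small, one should instead use the trivial bound via the largest eigenvalue of $A_h$: by the inverse inequality~(\ref{inv}) the discrete Stokes operator satisfies $\|A_h v_h\|_0 \le C h^{-2}\|v_h\|_0$ for $v_h \in V_{h,r}$, equivalently $\|A_h e^{-(t-s)A_h}\Pi_h f(s)\|_0 \le \|A_h\|_0 \|e^{-(t-s)A_h}\|_0 \|f(s)\|_0 \le C h^{-2}\|f\|_\infty$, using that the semigroup is a contraction on $H$. Then I would split at $s=t-h^2$: for $s\in(t-h^2,t)$ use the $h^{-2}$ bound, giving a contribution $\le C h^{-2}\cdot h^2 \|f\|_\infty = C\|f\|_\infty$; for $s\in(0,t-h^2)$ use the $(t-s)^{-1}$ bound, giving $e^{-1}\|f\|_\infty \int_0^{t-h^2}(t-s)^{-1}\,ds = e^{-1}\|f\|_\infty \log\!\big(t/h^2\big) \le C|\log(h)|\,\|f\|_\infty$ for $t\le T$ and $h$ small (absorbing the $\log 2$ and $\log T$ into the constant, and noting $\log(1/h^2)=2|\log h|$). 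Adding the two contributions yields the claimed bound.

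The main obstacle — more a point requiring care than a genuine difficulty — is justifying the inverse-type bound $\|A_h\|_0 \le C h^{-2}$ on $V_{h,r}$ uniformly in $h$, which relies on the quasi-uniformity of the mesh through~(\ref{inv}) (applied with $l=0$, $m=1$, $q=q'=2$ twice, or directly for second derivatives after noting $A_h v_h = -\Pi_h \Delta v_h$ is controlled elementwise); one must also make sure the constant is independent of $t$ and of the particular $f$, which is automatic here since everything is bounded by $\|f\|_\infty$. One should also be slightly careful that when $t < h^2$ the first subinterval is all of $(0,t)$ and the second is empty, in which case the $h^{-2}$ bound alone gives $\le C h^{-2} t \le C$, so the estimate still holds. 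Everything else is a routine application of the spectral estimates already recorded in the preliminaries.
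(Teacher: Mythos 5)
Your argument is correct. The paper does not actually prove Lemma~\ref{l4b}; it reproduces the statement from \cite{bbj}, and your proof is the standard argument for this type of logarithmic estimate (essentially what one finds in the cited reference): split the integral at $s=t-h^2$, use the smoothing bound (\ref{lee1}) with $\alpha=1$ away from the singularity, and near $s=t$ use $\|A_h\|_0\le Ch^{-2}$ on $V_{h,r}$ together with the contractivity of $e^{-\sigma A_h}$. The only point to state carefully is the bound $\|A_hv_h\|_0\le Ch^{-2}\|v_h\|_0$: since $A_h$ is defined variationally, the clean route is duality, $\|A_hv_h\|_0=\sup_{w_h\in V_{h,r}}(\nabla v_h,\nabla w_h)/\|w_h\|_0\le Ch^{-1}\|\nabla v_h\|_0\le Ch^{-2}\|v_h\|_0$, i.e.\ two applications of (\ref{inv}) with $l=0$, $m=1$, $q=q'=2$ (which also cover the bubble components of the mini-element on quasi-uniform meshes), rather than reading $A_hv_h$ as an elementwise Laplacian; your treatment of the degenerate case $t<h^2$ and the absorption of $\log T$ into the constant are fine.
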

\begin{lema}
\label{le:estz_h}
Let $(u,p)$ be the solution of {\rm (\ref{onetwo})--(\ref{ic})}. Then,
there exists a positive constant $C$ such that the error
$z_h=\Pi_{h}u-v_h$ in (\ref{z_h}) satisfies
the following bound:
\begin{align}
\label{eq:cota-z-1}
\|A_h^{(-1+j)/2}z_h\|_0&\le \frac{C}{t^{(r-2)/2}} h^{r+1-j},&& j=0,1,2,&&r\ge 3.
\end{align}
\end{lema}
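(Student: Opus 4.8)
The plan is to reduce the problem to a linear parabolic equation for $z_h$ whose forcing is $A_h$ applied to the gap between the mixed finite‑element Stokes projection and the discrete Leray projection of $u$, to integrate it by the variation‑of‑constants formula after one integration by parts in time, and to estimate the outcome with the smoothing bounds (\ref{lee1})--(\ref{labeta}), the approximation properties (\ref{eq:error-Pi_h})--(\ref{nch1}) and the regularity bounds (\ref{eq:u-inf})--(\ref{eq:u-int}); this is the argument of \cite[Section~4.1]{jbj_regularity}, which we sketch.

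First I derive the error equation. Let $(R_h u,\ell_h u)\in X_{h,r}\times Q_{h,r-1}$ be the mixed finite‑element Stokes projection of $(u,p)$, so that $R_h u\in V_{h,r}$ and $(\nabla(R_h u-u),\nabla\phi_h)=(\nabla(p-q_h),\phi_h)$ for all $\phi_h\in V_{h,r}$ and $q_h\in Q_{h,r-1}$. Testing the weak form of (\ref{onetwo}) against $\phi_h\in V_{h,r}$, subtracting (\ref{tenv}), and using that $\Pi_h$ is the $L^2(\Omega)^d$‑orthogonal projection onto $V_{h,r}$ (so that $\Pi_h\dot u=\tfrac{d}{dt}\Pi_h u$ and $(\dot u-\dot v_h,\phi_h)=(\dot z_h,\phi_h)$) and that $(\nabla q_h,\phi_h)=0$, one finds, with $\eta_h=(I-\Pi_h)u$ and $\xi_h:=R_h u-\Pi_h u$,
$$
(\dot z_h,\phi_h)+(\nabla z_h,\nabla\phi_h)=-(\nabla\eta_h,\nabla\phi_h)-(\nabla(p-q_h),\phi_h)=-(\nabla\xi_h,\nabla\phi_h),\qquad\forall\,\phi_h\in V_{h,r},
$$
the last equality being the characterisation of $R_h u$ (the pressure term is absorbed). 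In operator form $\dot z_h+A_h z_h=-A_h\xi_h$ with $z_h(0)=0$ (since $v_h(0)=\Pi_h u_0$), and hence, for $0<t\le T$,
$$
z_h(t)=-\xi_h(t)+e^{-(t/2)A_h}\xi_h(t/2)+\int_{t/2}^t e^{-(t-s)A_h}\dot\xi_h(s)\,ds-\int_0^{t/2}A_h e^{-(t-s)A_h}\xi_h(s)\,ds .
$$

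Next the four terms are estimated. For the first two I use a superconvergence estimate for the gap of the two projections, $\|A_h^{(-1+j)/2}\xi_h(\sigma)\|_0\le C\sigma^{-(r-2)/2}h^{r+1-j}$ for $j=0,1,2$, which follows from (\ref{eq:error-Pi_h}), (\ref{eq:error-A-1Pi_h}) (and the fact that discretely divergence‑free functions are $O(h)$‑close in $L^2$ to divergence‑free ones) together with $\|u(\sigma)\|_r\le M_r\sigma^{1-r/2}$ from (\ref{eq:u-inf}); for the second term $e^{-(t/2)A_h}$ is a contraction (or smooths further). For the third term, $\dot\xi_h=R_h u_s-\Pi_h u_s$ has the same structure with $u_s$ in place of $u$, so $\|A_h^{(-1+j+k)/2}\dot\xi_h(s)\|_0$ is bounded by $h^{r+1-j}$ times $\|u_s(s)\|_{r-2}$ for the appropriate $k$, and one combines $\|A_h^{(-1+j+k)/2}e^{-(t-s)A_h}\|_0\le C(t-s)^{-(-1+j+k)/2}$ from (\ref{lee1}) (or (\ref{labeta}) when the exponent is $1/2$) with $\|u_s(s)\|_{r-2}\le C(t/2)^{1-r/2}$ on $[t/2,t]$; for the fourth term $t-s\ge t/2$, so $\|A_h^{(1+j+k)/2}e^{-(t-s)A_h}\|_0\le Ct^{-(1+j+k)/2}$ and $\int_0^{t/2}\|A_h^{-k/2}\xi_h(s)\|_0\,ds$ is controlled by the Cauchy--Schwarz inequality against the weight $s^{r-3}$ using (\ref{eq:u-int}). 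Collecting the four contributions yields $\|A_h^{(-1+j)/2}z_h(t)\|_0\le Ct^{-(r-2)/2}h^{r+1-j}$; the case $r=2$ is treated analogously with the corresponding orders.

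I expect the main obstacle to lie in two coupled points. First, the superconvergence estimate for $\xi_h=R_h u-\Pi_h u$ in the negative norm $\|A_h^{-1/2}\cdot\|_0$: extracting the extra power $h^{r+1}$ from $\|u\|_r$ relies crucially on the Hood--Taylor structure through (\ref{eq:error-A-1Pi_h}) and on the near‑solenoidality of $V_{h,r}$. Second, the fourth term near $s=0$: because of the lack of regularity at $t=0$ one only has $\|u(s)\|_r,\|u_s(s)\|_{r-2}=O(s^{1-r/2})$, which for $r=4$ fails to be integrable near $s=0$, so one must use the merely square‑integrable bound (\ref{eq:u-int}) and split the weight $s^{1-r/2}$ between the semigroup kernel and the data so as to land exactly on the factor $t^{-(r-2)/2}$ with no logarithmic loss — in contrast with the estimate (\ref{supercuad}) for the nonlinear term.
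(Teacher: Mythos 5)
First, a remark on the comparison: the paper does not prove Lemma~\ref{le:estz_h} at all --- it is reproduced from \cite{jbj_regularity} --- so your argument has to stand on its own. Your reduction to $\dot z_h+A_hz_h=-A_h\xi_h$ with $\xi_h=R_hu-\Pi_hu$, the variation-of-constants formula with one integration by parts on $[t/2,t]$, and the negative-norm superconvergence of $\xi_h$ are sound, and for $r=3$ the four terms do close as you indicate: there (\ref{eq:u-int}) with $k=3$ gives the unweighted square integrability of $\|u\|_3+\|p\|_{H^2/\mathbb{R}}$, so $\int_0^{t/2}\|A_h^{-k/2}\xi_h\|_0\,ds\le Ch^{3+\min(k,1)}t^{1/2}$, and the $\dot\xi_h$ term is handled with $k=5$ in (\ref{eq:u-int}).

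The genuine gap is the cubic case $r=4$, exactly at the far-field term $\int_0^{t/2}A_he^{-(t-s)A_h}\xi_h\,ds$, and the fix you sketch does not work. On $[0,t/2]$ one has $t-s\ge t/2$, so the semigroup factor only contributes a constant of size $t^{-\alpha}$; it cannot absorb any weight in $s$. Hence, whatever negative power $A_h^{-k/2}$ you spend, this term forces you to bound $\int_0^{t/2}\bigl(\|u(s)\|_4+\|p(s)\|_{H^3/\mathbb{R}}\bigr)ds$, and that integral is not finite under the stated hypotheses: (\ref{eq:u-inf}) gives only $\|u(s)\|_4\le M_4s^{-1}$ and (\ref{eq:u-int}) only $\int_0^t s\,\|u(s)\|_4^2\,ds\le K_4^2$, which are compatible with, say, $\|u(s)\|_4\sim s^{-1}|\log s|^{-1}$, whose time integral diverges. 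Your Cauchy--Schwarz ``against the weight $s^{r-3}$'' requires $\int_0^{t/2}s^{-(r-3)}ds<\infty$, which fails precisely for $r=4$; and interpolating with the low-order bound $\|\xi_h\|_0\le Ch^2M_2$ near $s=0$ only recovers the estimate with an extra logarithmic factor, whereas (\ref{eq:cota-z-1}) is stated log-free (and is used as such). A secondary inaccuracy: for the near-field term you invoke $\|u_s\|_{r-2}$, but superconvergence of $\dot\xi_h$ of order $h^{r+1-j}$ needs $\|u_s\|_r$ and $\|p_s\|_{H^{r-1}/\mathbb{R}}$, which are only controlled in the weighted $L^2$-in-time sense of (\ref{eq:u-int}) with $k=r+2\le 6$; that part is repairable by Cauchy--Schwarz on $[t/2,t]$, unlike the far-field issue. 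To obtain the log-free $r=4$ bound one must avoid $L^1$-in-time norms of $\|u\|_4$ altogether, which is what the weighted-in-time (Heywood--Rannacher type) estimates underlying the analysis in \cite{jbj_regularity} are designed to do, rather than the pointwise Duhamel estimate you propose.
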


\begin{lema}
\label{le:lips01}
For each $\alpha>0$ there exist positive constants
$K>0$ and $h_0$ depending on $\alpha$ and $M_2$ such that, for
$h<H<h_0$,
$h_1=h_2=h$ or $\{h_1,h_2\}=\{h,H\}$,
and
every $w^1_{h_1}(\cdot)\in V_{h_1,r}$ and $,w^2_{h_2}(\cdot)\in V_{h_2,r}$
satisfying the threshold condition
\begin{equation}
\left\| u(t)-w^1_{h_1}(t)\right\|_j\le\alpha h_1^{2-j},\quad \left\|
u(t)-w^2_{h_2}(t)\right\|_j\le \alpha h_2^{2-j},\quad j=0,1,\ t\in[0,T],
\label{eq:threshold}
\end{equation}
for $w_h(t)\in H^1_0(\Omega)$, $t\in [0,T]$, satisfying
$$\left\| w_{h}(t)\right\|_j\le 2\alpha \max(h_1,h_2)^{2-j},$$
the following bounds hold:
\begin{eqnarray}
\left\| {\cal F}(w_h,w^2_{h_2})\right\|_0+\left\|{\cal F}(w^1_{h_1},w_h)\right\|_0
&\le& K \left\|
w_h\right\|_1, \label{eq:lips0b}
\\
\bet
\left\|{\cal F}(w_h,w^2_{h_2})\right\|_{-1}+\left\|{\cal F}(w^1_{h_1},w_h)\right\|_{-1} &\le& K
\left\| w_h\right\|_0, \label{eq:lips1b}\\
\bet
\left\| {\cal B}_h(w_h,w^2_{h_2})\right\|_0+\left\|{\cal B}_h(w^1_{h_1},w_h)\right\|_0 &\le& K \left\|
w_h\right\|_1, \label{eq:lips0}
\\
\bet
\bigl\| A_h^{-1/2}({\cal B}_h(w_h,w^2_{h_2}))\bigr\|_0+\bigl\|A_h^{-1/2}({\cal B}_h(w^1_{h_1},w_{h}))\bigr\|_0 &\le& K
\left\| w_h\right\|_0, \label{eq:lips1}
\end{eqnarray}
where ${\cal F}(u,v)$ can be either $(u\cdot\nabla) v+\frac{1}{2}(\nabla\cdot u)v$ or $(u\cdot\nabla) v$,
and ${\cal B}_h=\Pi_h{\cal F}$.
\end{lema}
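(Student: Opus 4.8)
The plan is to reduce all four inequalities to a single a priori fact about $w^1_{h_1}$ and $w^2_{h_2}$, and then to obtain \eqref{eq:lips0b}--\eqref{eq:lips1} by H\"older's and Sobolev's inequalities together with integration by parts. \emph{Step~1 (the key reduction).} I would first show that the threshold conditions \eqref{eq:threshold} and the regularity bound $\|u(t)\|_2\le M_2$ force $w^1_{h_1}$ and $w^2_{h_2}$ to be bounded, uniformly in $t\in[0,T]$ and in $h<H<h_0$, in the norm of $W^{1,4}(\Omega)^d$ --- and hence, since $W^{1,4}\hookrightarrow L^\infty$ for $d\le3$, also in $L^\infty(\Omega)^d$ --- by a constant depending only on $\alpha$ and $M_2$. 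To do this, write $w^i_{h_i}=\Pi_{h_i}u+(w^i_{h_i}-\Pi_{h_i}u)$, where $w^i_{h_i}-\Pi_{h_i}u\in V_{h_i,r}$ is a finite element function: by \eqref{eq:threshold} (for $j=0,1$), \eqref{eq:error-Pi_h} and $\|u\|_2\le M_2$ one has $\|w^i_{h_i}-\Pi_{h_i}u\|_1\le Ch_i$, so the inverse inequality \eqref{inv} gives $\|w^i_{h_i}-\Pi_{h_i}u\|_{W^{1,4}}\le Ch_i^{-d/4}\|w^i_{h_i}-\Pi_{h_i}u\|_1\le Ch_i^{1-d/4}$, which is bounded (indeed it tends to $0$) for $d\le3$; for $\Pi_{h_i}u$ itself I would insert the Lagrange interpolant $I_{h_i}u$, apply \eqref{inv} to the finite element function $\Pi_{h_i}u-I_{h_i}u$, and use the standard interpolation bound $\|u-I_{h_i}u\|_{W^{1,4}}\le Ch_i^{1-d/4}\|u\|_2$ and the Sobolev embedding $H^2\hookrightarrow W^{1,4}$ (valid for $d\le3$) to get $\|\Pi_{h_i}u\|_{W^{1,4}}\le C\|u\|_2$. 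Adding the two contributions yields $\|w^i_{h_i}(t)\|_{W^{1,4}}+\|w^i_{h_i}(t)\|_{L^\infty}+\|\nabla\cdot w^i_{h_i}(t)\|_{L^4}\le C$ on $[0,T]$, with $C$ and $h_0$ depending only on $\alpha$ and $M_2$.

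\emph{Step~2 (the $L^2$ bounds).} Granted Step~1, \eqref{eq:lips0b} is immediate. Writing ${\cal F}(v,w)=(v\cdot\nabla)w+c\,(\nabla\cdot v)w$ with $c\in\{0,\tfrac12\}$, I would bound, using $H^1\hookrightarrow L^4$, $\|(w_h\cdot\nabla)w^2_{h_2}\|_0\le\|w_h\|_{L^4}\|\nabla w^2_{h_2}\|_{L^4}\le C\|w_h\|_1$, $\|(\nabla\cdot w_h)w^2_{h_2}\|_0\le\|\nabla\cdot w_h\|_0\|w^2_{h_2}\|_{L^\infty}\le C\|w_h\|_1$, $\|(w^1_{h_1}\cdot\nabla)w_h\|_0\le\|w^1_{h_1}\|_{L^\infty}\|\nabla w_h\|_0\le C\|w_h\|_1$ and $\|(\nabla\cdot w^1_{h_1})w_h\|_0\le\|\nabla\cdot w^1_{h_1}\|_{L^4}\|w_h\|_{L^4}\le C\|w_h\|_1$. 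Then \eqref{eq:lips0} follows from \eqref{eq:lips0b} because $\Pi_h$, being the $L^2$-orthogonal projection onto $V_{h,r}$, is a contraction in $L^2$, so $\|{\cal B}_h(\cdot,\cdot)\|_0\le\|{\cal F}(\cdot,\cdot)\|_0$.

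\emph{Step~3 (the $H^{-1}$ bounds).} For \eqref{eq:lips1b} I would pair ${\cal F}$ with $\phi\in H^1_0(\Omega)^d$, $\|\phi\|_1\le1$, and integrate by parts using $w_h,w^1_{h_1}\in H^1_0$: first move the gradient in $(w_h\cdot\nabla)w^2_{h_2}$ (respectively in $(w^1_{h_1}\cdot\nabla)w_h$) onto $\phi$, then integrate by parts once more in the divergence terms so that $w_h$ appears undifferentiated in every resulting term. Estimating each term by H\"older with exponents $2,2,\infty$ or $2,4,4$, by Step~1 and by $H^1\hookrightarrow L^4$, gives $|({\cal F}(w_h,w^2_{h_2}),\phi)|+|({\cal F}(w^1_{h_1},w_h),\phi)|\le C\|w_h\|_0$, so \eqref{eq:lips1b} follows on taking the supremum over $\phi$; and then \eqref{eq:lips1} follows from \eqref{eq:lips1b} and \eqref{eq:nch11}, since $\|A_h^{-1/2}{\cal B}_h(\cdot,\cdot)\|_0=\|A_h^{-1/2}\Pi_h{\cal F}(\cdot,\cdot)\|_0\le C\|{\cal F}(\cdot,\cdot)\|_{-1}$. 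Throughout, the constant $K$ is manifestly independent of $w_h$, $w^1_{h_1}$ and $w^2_{h_2}$, and the two admissible forms of ${\cal F}$ are treated identically, the extra term $\tfrac12(\nabla\cdot v)w$ being controlled by the same estimates.

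The hard part will be Step~1: I need the negative powers of $h$ produced by the inverse inequality not to be overwhelmed by the only $O(h)$ smallness available in the $H^1$ norm. This works precisely because the argument is run in $W^{1,4}$, where $1-d/4>0$ and $H^2\hookrightarrow W^{1,4}$ for $d\le3$ --- rather than in the three-dimensional borderline space $W^{1,6}$, or in $W^{1,\infty}$ --- and choosing $h_0$ small enough then lets me absorb the factors $h_i^{1-d/4}$. Steps~2 and~3 are routine H\"older/Sobolev bookkeeping; the one point needing care there is the systematic integration by parts in Step~3, which is what makes the right-hand sides scale like $\|w_h\|_0$ rather than $\|w_h\|_1$.
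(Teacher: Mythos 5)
Your proof is correct: the uniform $W^{1,4}$ (hence $L^\infty$) bounds for $w^1_{h_1},w^2_{h_2}$ extracted from the threshold condition via \eqref{eq:error-Pi_h}, the inverse inequality \eqref{inv} and an interpolant, followed by H\"older/duality estimates, the $L^2$-stability of $\Pi_h$ and \eqref{eq:nch11}, deliver all four bounds with $K=K(\alpha,M_2)$ (and your argument in fact never needs the smallness hypothesis on $w_h$, which is harmless). The paper does not reprove the lemma but refers to the proof of Lemma~4.4 in \cite{jbj_regularity}, which proceeds by the same strategy --- uniform discrete $L^\infty$/$W^{1,p}$ bounds for functions satisfying the threshold condition obtained through inverse estimates, then H\"older and duality arguments --- so your proposal is essentially that proof written out in full.
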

\begin{proof}
The proof of
the present lemma can be found in
that of~Lemma~4.4 in \cite{jbj_regularity}
for ${\cal F}(u,v)=(u\cdot\nabla) v+\frac{1}{2}(\nabla\cdot u)v$ and
$w_h\in V_{h,r}$. With obvious
changes, the proof is also valid when ${\cal F}(u,v)=(u\cdot\nabla) v$, as well
as when $w_h\not\in V_{h,r}$.
\end{proof}
\begin{remark}
\label{re:1}
We will apply the above inequalities for $w_h=w_{h_1}^1-w_{h_2}^2$,
$w_h=w_{h_1}^1-u$ and
$w_h=w_{h_2}^2-u$. Let us also remark that the Lemma~\ref{le:lips01} also
holds if either $w_{h_1}^1$ or
$w_{h_2}^2$ is replaced by~$u$.
In what follows we will apply Lemma~\ref{le:lips01} to  $u_h$ and
$v_h$  both satisfying the threshold condition  (\ref{eq:threshold}) for an appropriate
value of~$\alpha$ (see \cite[Remark 4.1]{jbj_regularity}).
\end{remark}
\begin{lema}
\label{le:l3}
For $v \in (H^{2}(\Omega))^d \cap V$ there exists a positive constant $K=K(\|v\|_{2})$
such that $w \in H^{1}_{0}(\Omega)^{d}$ the following bound holds
for
$
e=v-w
$:
\begin{equation}\label{est4}
\phantom{aa}\big\|A^{-1}\Pi[ {\cal F}(v,e) +{\cal F}(e,v )]\big\|_{0}
\leq K
 \| v-w \|_{-1},
\end{equation}
where ${\cal F}(u,v)$ can be either $(u\cdot\nabla) v+\frac{1}{2}(\nabla\cdot u)v$ or $(u\cdot\nabla) v$.
\end{lema}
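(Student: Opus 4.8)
The plan is to estimate $\|A^{-1}\Pi[\mathcal{F}(v,e)+\mathcal{F}(e,v)]\|_0$ by duality, testing against an arbitrary $\phi\in H$ and rewriting the pairing in terms of the trilinear form. Concretely, for $\phi\in H$ with $\|\phi\|_0=1$, I would write $\bigl(A^{-1}\Pi[\mathcal{F}(v,e)+\mathcal{F}(e,v)],\phi\bigr)=\bigl(\mathcal{F}(v,e)+\mathcal{F}(e,v),A^{-1}\phi\bigr)$, using that $A^{-1}$ is self-adjoint on $H$ and that $\Pi$ is the $L^2$-projection onto $H$ while $A^{-1}\phi\in\mathcal{D}(A)\subset V$. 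Set $\psi=A^{-1}\phi$, so that $\|\psi\|_2\le C\|\phi\|_0=C$ by elliptic regularity for the Stokes operator (here the smoothness of $\partial\Omega$ is used), and in particular $\|\psi\|_{W^{1,\infty}}\le C$ by the Sobolev embedding~(\ref{sob1}). Then the quantity to bound is $|b(v,e,\psi)+b(e,v,\psi)|$ when $\mathcal F$ is the symmetrized form $F$, or the analogous expression with $(u\cdot\nabla)v$ in place of $F(u,v)$; in either case the two terms have the same structure.

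The key step is to move all derivatives off $e=v-w$, which only belongs to $H^1_0(\Omega)^d$ and for which we only have an $H^{-1}$ norm at our disposal. Using the skew-symmetry~(\ref{skew}), $b(v,e,\psi)=-b(v,\psi,e)=-(F(v,\psi),e)$, and since $v\in H^2\cap V$ and $\psi\in\mathcal D(A)$, the function $F(v,\psi)=(v\cdot\nabla)\psi+\tfrac12(\nabla\cdot v)\psi$ lies in $H^1_0(\Omega)^d$ with $\|F(v,\psi)\|_1\le C\|v\|_2\|\psi\|_2\le K(\|v\|_2)$; hence $|b(v,e,\psi)|=|(F(v,\psi),e)|\le\|F(v,\psi)\|_1\|e\|_{-1}\le K\|v-w\|_{-1}$. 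For the second term I would integrate by parts directly: $b(e,v,\psi)=(F(e,v),\psi)$, and writing out $F(e,v)=(e\cdot\nabla)v+\tfrac12(\nabla\cdot e)v$ and integrating by parts so that no derivative falls on $e$, one gets $b(e,v,\psi)=-\sum_{i,j}\int e_i v_j\,\partial_i\psi_j-\tfrac12\sum_{i,j}\int e_i\,\partial_i(v_j\psi_j)$ plus boundary terms that vanish because $\psi\in H^1_0$; each resulting integrand is $e$ times a product of $v$, $\psi$ and one first derivative of $v$ or $\psi$, all of which are bounded in, say, $L^\infty\cap H^1$, so this term is likewise $\le K(\|v\|_2)\|e\|_{-1}$. (For the non-symmetrized choice $\mathcal F(u,v)=(u\cdot\nabla)v$ one first adds and subtracts $\tfrac12(\nabla\cdot u)v$ and uses $\|(\nabla\cdot v)e\|$-type bounds, or argues directly as above; the structure is the same.) Taking the supremum over $\phi$ then yields~(\ref{est4}).

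The main obstacle is the bookkeeping needed to ensure that after integration by parts \emph{every} surviving term is a pairing of $e$ (controlled only in $H^{-1}$) against something controlled in $H^1$, and in particular that all derivatives land on $v$ or on $\psi=A^{-1}\phi$. This forces one to use the skew-symmetry on the first term and explicit integration by parts on the second, and to invoke $H^2$-regularity of the dual solution $\psi$ together with the Sobolev embeddings $H^2(\Omega)^d\hookrightarrow W^{1,d'}$ (and $H^2\hookrightarrow L^\infty$ in dimension $d\le 3$) so that products like $v_j\,\partial_i\psi_j$ are $H^1$ functions with norm controlled by $\|v\|_2$. None of the individual estimates is hard, but the constant's dependence on $\|v\|_2$ (and only on $\|v\|_2$) must be tracked carefully, since that is exactly what the statement asserts.
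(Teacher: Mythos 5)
Your proof is correct and follows essentially the argument the paper relies on: the paper itself only cites the proof of Lemma~3.4 in \cite{bbj}, which is precisely this duality argument with $\psi=A^{-1}\phi$, Stokes $H^2$-regularity $\|\psi\|_2\le C\|\phi\|_0$, and skew-symmetry/integration by parts arranged so that $e$ is always paired against functions controlled in $H^1$ by $C\|v\|_2\|\psi\|_2$. Two harmless slips to fix: $H^2(\Omega)$ does not embed into $W^{1,\infty}$ for $d=2,3$, so delete the claim $\|\psi\|_{W^{1,\infty}}\le C$ (your estimates only need $\|\psi\|_{L^\infty}+\|\nabla\psi\|_{L^6}\le C\|\psi\|_2$, i.e.\ the $H^2\times H^1\to H^1$ product bound you invoke at the end), and the integration-by-parts identity should read $b(e,v,\psi)=-\sum_{i,j}\int e_i v_j\,\partial_i\psi_j+\tfrac12\sum_{i,j}\int e_i\,\partial_i(v_j\psi_j)$, a sign change that does not affect the final estimate.
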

\begin{proof}
The proof of this result when ${\cal F}(u,v)=(u\cdot\nabla) v+\frac{1}{2}(\nabla\cdot u)v$ can be found as part of the proof of~\cite[Lemma 3.4]{bbj}. With
obvious changes, the proof is also valid when ${\cal F}(u,v)=(u\cdot\nabla) v$.
\end{proof}

Let us observe that the approximation over the finer grid $\tilde u_h$ and the recently defined
$v_h$ satisfy
\begin{gather}
\label{atenop}
 \dot{\tilde u}_{h}+ A_{h} \tilde u_{h}+ \Pi_h(u_H\cdot \nabla \tilde u_h)= \Pi_{h}f,\qquad
 u_h(0)=\Pi_{h}u_0, \\
\bet
\label{atenopv}
 \dot{v}_{h}+ A_{h} v_{h}+ \Pi_h(u\cdot \nabla u)= \Pi_{h}f,\qquad
 v_h(0)=\Pi_{h}u_0,
 \end{gather}
respectively. Then $e_h=v_h-\tilde u_h$ satisfies
\begin{eqnarray}\label{e_h}
\dot e_h+ A_h e_h+\Pi_h(u_H\cdot \nabla e_h)=\Pi_h\rho_{h,H},\qquad e_h(0)=0,
\end{eqnarray}
where
$$
\rho_{h,H}=u_H\cdot \nabla v_h-u\cdot \nabla u.
$$
In the proof of Theorem~\ref{teo1} below we will use the following lemmas.

\begin{lema}
\label{le:aux1}
Let $(u,p)$ be the solution of {\rm (\ref{onetwo})--(\ref{ic})}.
 There exists a positive constant $C$ such that
the  following inequality holds for $r=3,4$:
\begin{eqnarray*}
\|A_h^{-1}\Pi_h\rho_{h,H}\|_0\le
\frac{C}{t^{(r-2)/2}} |\log(H)|H^{r+1},\qquad
t\in (0,T].
\end{eqnarray*}
\end{lema}
\unskip

\begin{proof}
Let us write
$\rho_{h,H}=\rho_{h,H}^1+\rho_{h,H}^2$, where
\begin{equation}\label{decomrho}
\rho_{h,H}^1=((u_H-u)\cdot \nabla v_h),\quad  \rho_{h,H}^2=(u\cdot \nabla (v_h-u)).
\end{equation}
By applying~(\ref{nch1})
we have
$$
\|A_h^{-1}\Pi_h\rho_{h,H}^j\|_0\le C
h^2\|\rho_{h,H}^j\|_0+\|A^{-1}\Pi \rho_{h,H}^j\|_0,\quad j=1,2.
$$
To bound $\|\rho_{h,H}^1\|_0$ let us recall
Remark~\ref{re:1} and~apply~(\ref{eq:lips0b})
to get
\begin{equation}\label{norma0rho1}
\|\rho_{h,H}^1\|_0\le C\|u_H-u\|_1\le C \frac{H^{r-1}}{t^{(r-2)/2}},
\end{equation}
where we have applied (\ref{gal_cuad})  from Lemma~\ref{cotas_gal} in the last inequality.
Applying (\ref{eq:lips0b}) we also get
\begin{equation}\label{norma0rho2}
\|\rho_{h,H}^2\|_0\le C \|v_h-u\|_1\le C \frac{h^{r-1}}{t^{(r-2)/2}},
\end{equation}
where in the last inequality we have applied standard bounds for $\Pi_{h}$ (see
(\ref{eq:error-Pi_h})) together with the estimates  (\ref{eq:cota-z-1}) for $z_h$ in
Lemma~\ref{le:estz_h}.
Let us next bound $\|A^{-1}\Pi \rho_{h,H}^1\|_0$. We will use the decomposition
\begin{equation}\label{decomposition}
\rho_{h,H}^1=((u-u_H)\cdot \nabla)(v_h-u)+((u-u_H)\cdot \nabla)u.
\end{equation}
Then, we obtain
\begin{eqnarray}\label{Amenos1}
\|A^{-1}\Pi \rho_{h,H}^1\|_0&=&\|A^{-1}\Pi \left(((u_H-u)\cdot \nabla) (v_h-u)\right)\|_0\nonumber\\
\qquad
&&{}+\|A^{-1}\Pi \left(((u_H-u)\cdot \nabla) u\right)\|_0.
\end{eqnarray}
To bound the second term in (\ref{Amenos1}) we apply (\ref{est4}) from Lemma~\ref{le:l3} to get
\begin{eqnarray*}
 \|A^{-1}\Pi \left(((u_H-u)\cdot \nabla) u\right)\|_0\le C \|u_H-u\|_{-1}.
\end{eqnarray*}
Applying then (\ref{supercuad}) together with (\ref{eq:error-A-1Pi_h}) and (\ref{eq:cota-z-1})  we get
\begin{eqnarray}\label{menos1u_H}
\|u_H-u\|_{-1}&\le& \|u_H-v_H\|_0+\|v_H-u\|_{-1}\nonumber\\
&\le& \frac{C}{t^{(r-2)/2}}|\log(H)|H^{r+1}+\frac{C}{t^{(r-2)/2}}H^{r+1}.
\end{eqnarray}
To bound the first term in (\ref{Amenos1}) we argue by duality, using (\ref{sob1}), we get
\begin{eqnarray*}
&&\|A^{-1}\Pi \left(((u_H-u)\cdot \nabla) (v_h-u)\right)\|_0\nonumber\\
&&\quad=\sup_{\|\phi\|_0=1}
\left(((u_H-u)\cdot \nabla) (v_h-u),A^{-1}\Pi\phi\right)\nonumber\\
&&\qquad\le\sup_{\|\phi\|_0=1}\|u_H-u\|_0\|v_h-u\|_1\|A^{-1}\Pi\phi\|_\infty\nonumber\\
&& \qquad\le\sup_{\|\phi\|_0=1} C\|u_H-u\|_0\|v_h-u\|_1\|A^{-1}\Pi\phi\|_2\le C\|u_H-u\|_0\|v_h-u\|_1.
\end{eqnarray*}
Now, in view of the case $r=2$ in~(\ref{gal_cuad})
and using again (\ref{eq:error-Pi_h}) and (\ref{eq:cota-z-1})
 we conclude
$$
\|A^{-1}\Pi \left(((u_H-u)\cdot \nabla) (v_h-u)\right)\|_0\le \frac{C}{t^{(r-2)/2}} H^2 h^{r-1}.
$$
Finally, to bound $\|A^{-1}\Pi\rho_{h,H}^2\|_0$ we apply again (\ref{est4}) to bound this norm in terms of $\|v_h-u\|_{-1}$
which, as we shown in (\ref{menos1u_H}), is bounded by $Ct^{(2-r)/2} h^{r+1}$.
\end{proof}
\begin{lema}
\label{le:aux1b}
Let $(u,p)$ be the solution of {\rm (\ref{onetwo})--(\ref{ic})}.
 There exists a positive constant $C$ such that
the  following inequalities hold for $r=3,4$:
\begin{eqnarray}
\|\rho_{h,H}\|_{-1}&\le&
\frac{C}{t^{(r-1)/2}} |\log(H)|H^{r+1}+\frac{C}{t^{(r-2)/2}}h^r, \qquad t\in (0,T],
\label{primer}
\\
\|\rho_{h,H}\|_{-1}&\le&\frac{C}{t^{1/2}}H^3, \qquad t\in (0,T].\label{segun}
\end{eqnarray}
\end{lema}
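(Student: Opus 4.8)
The plan is to estimate $\|\rho_{h,H}\|_{-1}$ using the same decomposition $\rho_{h,H}=\rho_{h,H}^1+\rho_{h,H}^2$ as in Lemma~\ref{le:aux1}, with $\rho_{h,H}^1=((u_H-u)\cdot\nabla)v_h$ and $\rho_{h,H}^2=(u\cdot\nabla)(v_h-u)$, and then bound each $\|\rho_{h,H}^j\|_{-1}$ by duality. For $\rho_{h,H}^2$, I would rewrite it as $(u\cdot\nabla)(v_h-u)$ and combine with the skew-symmetric companion term: more precisely, invoking Lemma~\ref{le:l3} (inequality (\ref{est4})) with $v=u$ and $w=v_h$, one controls $\|A^{-1}\Pi[\mathcal{F}(u,v_h-u)+\mathcal{F}(v_h-u,u)]\|_0$ by $\|v_h-u\|_{-1}$, which by (\ref{eq:error-Pi_h}) and the case $j=1$, actually $j=0$, of (\ref{eq:cota-z-1}) in Lemma~\ref{le:estz_h} together with (\ref{eq:error-A-1Pi_h}) gives a bound of order $t^{(2-r)/2}h^{r+1}$; since $\|\cdot\|_{-1}\le C\|A^{-1/2}\cdot\|_0\le C\|A^{-1}\cdot\|_0^{1/2}\|\cdot\|_0^{1/2}$ is not quite what is wanted, I would instead argue the $H^{-1}$ bound directly by duality, pairing against $\phi\in V$ with $\|\phi\|_1=1$, integrating by parts to move a derivative onto $\phi$, and using $\|u\|_{L^\infty}\le C\|u\|_2\le CM_2$ to get $\|\rho_{h,H}^2\|_{-1}\le C\|v_h-u\|_0\le C t^{(2-r)/2}h^r$ after absorbing $\|A_h^{1/2}z_h\|_0$-type terms; here the $h^r$ rather than $h^{r+1}$ comes from the $\Pi_h$-projection error in (\ref{eq:error-Pi_h}) at $j=0$.

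For the first and harder term $\rho_{h,H}^1$, I would again split $v_h=u+(v_h-u)$ to write $\rho_{h,H}^1=((u_H-u)\cdot\nabla)(v_h-u)+((u_H-u)\cdot\nabla)u$. The second piece $((u_H-u)\cdot\nabla)u$ is paired in $H^{-1}$ against $\phi$ with $\|\phi\|_1=1$; after integration by parts this is controlled by $\|u_H-u\|_0\|u\|_{L^\infty}\|\phi\|_0$, hence by $C\|u_H-u\|_0$, but to gain the extra factor of $t^{-1/2}$ relative to (\ref{menos1u_H}) I would instead keep it as $\|A^{-1/2}\Pi((u_H-u)\cdot\nabla)u\|_0$, use the analogue of Lemma~\ref{le:l3} at the $A^{-1/2}$ level (or interpolate between (\ref{est4}) and the $L^2$ bound), and invoke (\ref{supercuad}), (\ref{eq:error-A-1Pi_h}) and (\ref{eq:cota-z-1}) to obtain $C t^{-(r-1)/2}|\log(H)|H^{r+1}$; the temporal weight $t^{-(r-1)/2}$ versus $t^{-(r-2)/2}$ arises because the $H^{-1}$ norm demands half a derivative more than $A^{-1}$ did, costing one extra half-power of $\tau(t)^{-1/2}$ through (\ref{eq:u-inf}). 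The first piece $((u_H-u)\cdot\nabla)(v_h-u)$ is a genuine product of two small quantities; pairing in $H^{-1}$ and using Sobolev (\ref{sob1}) gives $\|u_H-u\|_0\|\nabla(v_h-u)\|_0\|\phi\|_{L^\infty}$, which is $O(t^{(2-r)/2}\,H^2\,h^{r-1})$ by (\ref{gal_cuad}) with $r=2$ together with (\ref{eq:error-Pi_h}) and (\ref{eq:cota-z-1}); this is absorbed into the $h^r$ term since $H^2 h^{r-1}\le h^{r+1}\le h^r$ for $h<H$. Collecting, $\|\rho_{h,H}\|_{-1}\le C t^{-(r-1)/2}|\log(H)|H^{r+1}+C t^{-(r-2)/2}h^r$, which is (\ref{primer}).

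For (\ref{segun}) I would take the coarsest possible estimate, valid for $t$ bounded away from where the singular weights matter only through $t^{-1/2}$: bound $\|\rho_{h,H}\|_{-1}\le\|\rho_{h,H}^1\|_{-1}+\|\rho_{h,H}^2\|_{-1}$ and, for each, pair against $\phi$ with $\|\phi\|_1=1$, integrate by parts, and use the crude bounds $\|v_h-u\|_0\le C H^{?}$—instead, more efficiently, I would bound $\|\rho_{h,H}\|_{-1}$ by $C(\|u_H-u\|_0+\|v_h-u\|_0)\le C t^{-1/2}H^3$ using (\ref{gal_cuad}) with $r=3$ (valid for all $3\le r\le 4$ since the coarse mesh approximation has at least third-order $L^2$ accuracy with weight $t^{-1/2}$) and the fact that $\|v_h-u\|_0\le C t^{-1/2}h^4\le C t^{-1/2}H^3$; here I use that $v_h$ is the auxiliary approximation on the fine grid, whose error is of order $h^r$, hence at least $h^3$, and $h<H$. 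The main obstacle is the bookkeeping of temporal weights: one must verify that each duality pairing and each application of Lemma~\ref{le:l3} or its half-order variant produces exactly the claimed power of $t^{-1}$, which requires using (\ref{eq:u-inf}) at the sharp index $k$ and being careful that the extra half-derivative in $\|\cdot\|_{-1}$ (compared with $\|A^{-1}\cdot\|_0$) costs precisely one factor of $t^{-1/2}$ in the $H^{r+1}$ term but none in the $h^r$ term, where the $\Pi_h$-error already carries the weight from (\ref{eq:cota-z-1}).
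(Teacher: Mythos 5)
Your overall architecture matches the paper's: the same splitting $\rho_{h,H}=\rho^1_{h,H}+\rho^2_{h,H}$ with $\rho^1_{h,H}=((u_H-u)\cdot\nabla)v_h$, $\rho^2_{h,H}=(u\cdot\nabla)(v_h-u)$, the further split $\rho^1_{h,H}=((u_H-u)\cdot\nabla)(v_h-u)+((u_H-u)\cdot\nabla)u$, and the bound $\|\rho^2_{h,H}\|_{-1}\le C\|v_h-u\|_0\le Ct^{-(r-2)/2}h^r$ (which is exactly (\ref{eq:lips1b})) are all as in the paper, and your treatment of (\ref{segun}) is in the same "analogous and easier" spirit. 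The problem is the dominant term $((u_H-u)\cdot\nabla)u$, which is precisely where the $t^{-(r-1)/2}|\log(H)|H^{r+1}$ in (\ref{primer}) must come from, and there your mechanism does not work. You propose to "keep it as $\|A^{-1/2}\Pi((u_H-u)\cdot\nabla)u\|_0$" and use a half-order analogue of Lemma~\ref{le:l3} or an interpolation between (\ref{est4}) and the $L^2$ bound. First, the reduction is wrong-directional: $\|A^{-1/2}\Pi g\|_0\le C\|g\|_{-1}$, not the reverse, so controlling $\|A^{-1/2}\Pi(\cdot)\|_0$ does not control the $H^{-1}$ norm needed in the lemma (and later in the pressure estimate of Theorem~\ref{teo5}). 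Second, the natural interpolation of (\ref{est4}) with the $L^2$ estimate yields a bound by $\|u_H-u\|_0\sim t^{-(r-2)/2}H^r$, i.e. one power of $H$ short of $H^{r+1}$ and without the logarithm; the extra $t^{-1/2}$ you expect "because $H^{-1}$ demands half a derivative more" does not appear by itself. The paper gets the claimed rate by a direct duality: $(((u_H-u)\cdot\nabla)u,\phi)\le\|u_H-u\|_{-1}\,\|\phi\nabla u\|_1$, then $\|\phi\nabla u\|_1\le C(\|\nabla u\|_{W^{1,2d/(d-1)}}\|\phi\|_{L^{2d}}+\|\nabla u\|_\infty\|\phi\|_1)\le C\|u\|_3\|\phi\|_1$, so that (\ref{menos1u_H}) supplies $|\log(H)|H^{r+1}t^{-(r-2)/2}$ and $\|u\|_3\le M_3t^{-1/2}$ from (\ref{eq:u-inf}) supplies the extra $t^{-1/2}$. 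That concrete pairing (putting the whole weight of the $H^1$ norm on $\phi\nabla u$) is the missing idea in your proposal.

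Two further technical flaws in your treatment of the cross term $((u_H-u)\cdot\nabla)(v_h-u)$: the H\"older split $\|u_H-u\|_0\|\nabla(v_h-u)\|_0\|\phi\|_{L^\infty}$ is inadmissible, since $\|\phi\|_1=1$ does not control $\|\phi\|_{L^\infty}$ in dimension $d=2,3$ (that device worked in Lemma~\ref{le:aux1} only because the test function there was $A^{-1}\Pi\phi\in H^2$); the paper instead uses $\|u_H-u\|_{L^{2d}}\|v_h-u\|_1\|\phi\|_{L^{2d/(d-1)}}\le C\|u_H-u\|_1\|v_h-u\|_1\le Ct^{-(r-1)/2}H^{r-1}h^2$. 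Also your absorption inequality is reversed: for $h<H$ one has $H^2h^{r-1}\ge h^{r+1}$, not $\le$; the cross term is still harmless, but via Young's inequality (or via the paper's $H^{r-1}h^2$ form), not by the inequality you wrote.
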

\begin{proof}
The proof is very similar to the one of the previous lemma. We will prove (\ref{primer}) since
the proof of (\ref{segun}) is completely analogous and yet easier.
We use the decomposition (\ref{decomrho}).

For $\rho_{h,H}^2$ we apply (\ref{eq:lips1b}) to get
$$
\|\rho_{h,H}^2\|_{-1} \le C\|v_h-u\|_0\le C \frac{h^r}{t^{(r-2)/2}},
$$
where we have applied
(\ref{eq:error-Pi_h}) and  (\ref{eq:cota-z-1}) in the last inequality.
For  $\rho_{h,H}^1$ we will use the decomposition (\ref{decomposition}).
For the first term in (\ref{decomposition}) using~(\ref{sob1}) we have
\begin{eqnarray*}
\| ((u_H-u)\cdot \nabla) (v_h-u)\|_{-1}
&=&\sup_{\|\phi\|_1=1}
\left(((u_H-u)\cdot \nabla) (v_h-u),\phi\right)\nonumber\\
&\le&\sup_{\|\phi\|_1=1} \|u_H-u\|_{L^{2d}}\|v_h-u\|_1\|\phi\|_{L^{2d/(d-1)}}\nonumber\\
&\le& C\|u_H-u\|_1\|v_h-u\|_1\le C \frac{H^{r-1}}{t^{(r-2)/2}}\frac{h^2}{t^{1/2}},
\end{eqnarray*}
where we have applied (\ref{gal_cuad}) and (\ref{eq:error-Pi_h}) and  (\ref{eq:cota-z-1}) in the
last inequality. Finally, for the second term using  (\ref{sob1}), (\ref{eq:u-inf}) and (\ref{menos1u_H})
we obtain
\begin{align*}
\|((u_H-u)\cdot \nabla) u\|_{-1}
&
=\sup_{\|\phi\|_1=1}
\left(((u_H-u)\cdot \nabla) u,\phi\right)\nonumber\\
&
\le\sup_{\|\phi\|_1=1} \|u_H-u\|_{-1}\|\phi\nabla u \|_{1}\nonumber\\
&\le \sup_{\|\phi\|_1=1}\|u_H-u\|_{-1}\Bigl(\|\nabla u\|_{W^{1,2d/(d-1)}}\|\phi\|_{L^{2d}}
\\
&\hphantom{\quad\le \sup_{\|\phi\|_0=1}\|u_H-u\|_{-1}\Bigl(\|\Delta u\|_{L^{2d/(d-1)}}
}
{}+\|\nabla u\|_\infty\| \phi\|_1\Bigr)\nonumber\\
&\le \frac{C}{t^{(r-2)/2}}|\log(H)|H^{r+1}\|u\|_3\le \frac{C}{t^{(r-1)/2}}|\log(H)|H^{r+1}.
\end{align*}
\end{proof}
\begin{lema}
\label{le:cota_pre_e1}
Let $(u,p)$ be the solution of {\rm (\ref{onetwo})--(\ref{ic})}. Then there
exists a positive constant $C$ such that
the discrete velocity $v_h$ defined by~$(\ref{atenopv})$
and the approximation to $u$ over the finer grid, $\tilde u_{h}$ satisfy
the following bound:
\begin{eqnarray*}
\|A_h^{l/2}(v_h(t)-\tilde u_h(t))\|_0 \le C H^{3-l},\quad r\ge 3,\quad l=0,1,\quad t\in (0,T].
\end{eqnarray*}
\end{lema}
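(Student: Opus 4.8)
The plan is to estimate $e_h=v_h-\tilde u_h$ through the variation-of-constants formula associated with the error equation~(\ref{e_h}). Since $e_h(t)\in V_{h,r}$ for every $t$ and satisfies $\dot e_h+A_he_h=\Pi_h\rho_{h,H}-{\cal B}_h(u_H,e_h)$ with $e_h(0)=0$, where ${\cal B}_h(u,v)=\Pi_h(u\cdot\nabla v)$, we have
\[
A_h^{l/2}e_h(t)=\int_0^tA_h^{l/2}e^{-(t-s)A_h}\bigl(\Pi_h\rho_{h,H}(s)-{\cal B}_h(u_H(s),e_h(s))\bigr)\,ds=:I_1^{(l)}(t)-I_2^{(l)}(t),
\]
and the task is to bound $\|I_1^{(l)}(t)\|_0$ and $\|I_2^{(l)}(t)\|_0$ by $CH^{3-l}$ uniformly for $t\in(0,T]$.

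For the forcing term $I_1^{(l)}$ I would put all the fractional powers on the semigroup so that the time integral becomes a Beta-function integral of the type~(\ref{labeta}). For $l=0$, write $e^{-(t-s)A_h}\Pi_h\rho_{h,H}=A_h^{1/2}e^{-(t-s)A_h}\,A_h^{-1/2}\Pi_h\rho_{h,H}$, use $\|A_h^{-1/2}\Pi_h\rho_{h,H}(s)\|_0\le C\|\rho_{h,H}(s)\|_{-1}$ from (\ref{eq:nch11}) and the bound $\|\rho_{h,H}(s)\|_{-1}\le CH^3s^{-1/2}$ of (\ref{segun}); then~(\ref{labeta}) gives $\|I_1^{(0)}(t)\|_0\le CH^3$ uniformly. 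For $l=1$, keep one half-power on the semigroup, $A_h^{1/2}e^{-(t-s)A_h}\Pi_h\rho_{h,H}$, and use $\|\Pi_h\rho_{h,H}(s)\|_0\le\|\rho_{h,H}(s)\|_0\le C(\|u_H(s)-u(s)\|_1+\|v_h(s)-u(s)\|_1)\le CH^2s^{-1/2}$, the last inequality coming from~(\ref{eq:lips0b}) together with the $r=3$ versions of the estimates of Lemma~\ref{cotas_gal} and Lemma~\ref{le:estz_h} (that is, using only $u\in H^3$ with the weight $s^{-1/2}$); then again (\ref{labeta}) yields $\|A_h^{1/2}I_1^{(1)}(t)\|_0\le CH^2$ uniformly.

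For the convective term $I_2^{(l)}$ the idea is to absorb it by a generalized Gronwall argument using Lemma~\ref{le:lips01}. One first shows, by a continuation argument, that $e_h$ satisfies the threshold condition $\|e_h(t)\|_j\le2\alpha H^{2-j}$, $j=0,1$, for a suitably large fixed $\alpha$: letting $t^*$ be the supremum of times up to which this holds, $t^*>0$ because $e_h(0)=0$ and $e_h$ is continuous into $V_{h,r}$. On $[0,t^*]$ Lemma~\ref{le:lips01} applies with $w^1_{h_1}=u_H$ (which satisfies $\|u-u_H\|_j\le\alpha H^{2-j}$ by the uniform-in-time second-order estimate contained in Lemma~\ref{cotas_gal}) and $w_h=e_h$ (Remark~\ref{re:1}), giving $\|A_h^{-1/2}{\cal B}_h(u_H,e_h)\|_0\le K\|e_h\|_0$ from~(\ref{eq:lips1}) and $\|{\cal B}_h(u_H,e_h)\|_0\le K\|e_h\|_1\le CK\|A_h^{1/2}e_h\|_0$ from~(\ref{eq:lips0}). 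Distributing $A_h^{l/2}$ as before and using~(\ref{lee1}) then yields, for $t\in[0,t^*]$,
\[
\|e_h(t)\|_0\le CH^3+CK\!\int_0^t\!\frac{\|e_h(s)\|_0}{(t-s)^{1/2}}\,ds,\qquad \|A_h^{1/2}e_h(t)\|_0\le CH^2+CK\!\int_0^t\!\frac{\|A_h^{1/2}e_h(s)\|_0}{(t-s)^{1/2}}\,ds,
\]
and a Gronwall inequality with weakly singular (hence integrable) kernel gives $\|e_h(t)\|_0\le C'H^3$ and $\|A_h^{1/2}e_h(t)\|_0\le C'H^2$ on $[0,t^*]$. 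For $h_0$ small enough these are strictly below the threshold, so necessarily $t^*=T$, and the lemma follows.

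The step I expect to be the main obstacle is the uniform-in-$t$ control of $I_1^{(1)}$ near $t=0$: the naive bound $\|\rho_{h,H}(s)\|_0\le CH^{r-1}s^{-(r-2)/2}$ obtained from the full $r$-th order estimates is not integrable against $(t-s)^{-1/2}$ when $r=4$, and the key point is to trade the extra power of $H$ for a milder power of $s$, i.e.\ to use the $r=3$ estimates even for the cubic element, so that the spurious time singularity produced by the lack of regularity of $u$ at $t=0$ disappears while the $O(H^{3-l})$ order is preserved. The remaining bookkeeping (the several negative-norm bounds on $\rho_{h,H}$ from Lemmas~\ref{le:aux1} and~\ref{le:aux1b}, and the verification of the threshold) is routine.
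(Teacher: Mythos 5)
Your argument is essentially the paper's own proof: the same Duhamel representation for $A_h^{l/2}e_h$, the convective term absorbed through the estimates of Lemma~\ref{le:lips01} ($\|A_h^{(l-1)/2}\Pi_h((u_H\cdot\nabla)e_h)\|_0\le C\|A_h^{l/2}e_h\|_0$) together with the generalized Gronwall lemma with weakly singular kernel, and the forcing handled via (\ref{labeta}) using (\ref{eq:nch11}) with (\ref{segun}) for $l=0$ and the $O(H^2 s^{-1/2})$ bound on $\|\rho_{h,H}\|_0$ (i.e.\ the lower-order, milder-singularity version of (\ref{norma0rho1})--(\ref{norma0rho2})) for $l=1$. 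The only difference is your explicit continuation argument for the threshold condition on $e_h$, a point the paper dispatches through Remark~\ref{re:1} rather than a bootstrap.
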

\unskip

\begin{proof}
Let us consider $y_h(t)=A_h^{l/2}e_h(t)$. From (\ref{e_h}) it follows
that
$$
y_h(t)=\int_0^t e^{-(t-s)A_h}
A_h^{l/2}\Pi_h\bigl((u_H\cdot\nabla) e_h\bigr)\,ds
+ \int_0^t e^{-(t-s)A_h} A_h^{l/2}\Pi_h\rho_{h,H}(s)\,ds.
$$
Applying (\ref{lee1}), and taking into account that as a consequence of~(\ref{eq:lips0b}) and (\ref{eq:lips1}) we have $\|A_h^{(-1+l)/2}\Pi_h((u_H\cdot\nabla) e_h)\|_0\le C\|A_h^{l/2}e_h\|_0$, it follows that
\begin{eqnarray*}
\left\| y_h(t)\right\|_0 \le \int_0^t \frac{C}{\sqrt{t-s}} \|y_h\|_0\,ds  +\biggl\| \int_0^t e^{-(t-s)A_h}
A_h^{l/2}\Pi_h\rho_{h,H}(s)\,ds\biggr\|,
\end{eqnarray*}
so that a generalized Gronwall lemma \cite[pp.~188--189]{Henry} allow us to
write
\begin{equation}
\label{delema8}
\max_{0\le t\le T}\left\| y_h(t)\right\|_0 \le C\max_{0\le t\le T}
\biggl\|\int_0^t e^{-(t-s)A_h} A_h^{l/2}\Pi_h\rho_{h,H}(s)\,ds \biggr\|_0.
\end{equation}
Using (\ref{labeta}) we obtain
\begin{equation}
\label{jul5}
\max_{0\le t\le T}\left\| y_h(t)\right\|_0 \le C
B\left(\frac{1}{2},\frac{1}{2}\right)\max_{0\le s\le T} s^{1/2}\bigl\|A_h^{(-1+l)/2}\Pi_h \rho_{h,H} \bigr\|_0.
\end{equation}
To conclude we apply~(\ref{eq:nch11}) and~(\ref{segun}) from Lemma~\ref{le:aux1b} in the
case $l=0$, and (\ref{norma0rho1}) and (\ref{norma0rho2}) in the case $l=1$.
\end{proof}
\begin{lema}
\label{le:aux2}
Let $(u,p)$ be the solution of {\rm (\ref{onetwo})--(\ref{ic})}. Then, there
exists a positive constant $C$ such that
the discrete velocity $v_h$ defined by~$(\ref{atenopv})$
and the approximation to $u$ over the finer grid, $\tilde u_{h}$ satisfy
the following bound:
\begin{eqnarray*}
\|A_h^{-1/2}(v_h(t)-\tilde u_h(t))\|_0 \le C |\log(H)|H^4,\quad r\ge 3,\quad t\in (0,T].
\end{eqnarray*}
\end{lema}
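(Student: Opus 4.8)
The plan is to follow the template of the proof of Lemma~\ref{le:cota_pre_e1}, carrying one more negative power of~$A_h$ and using Lemma~\ref{le:aux1} in place of~(\ref{segun}). Set $y_h(t)=A_h^{-1/2}e_h(t)$, where $e_h=v_h-\tilde u_h$ solves~(\ref{e_h}). Since $A_h^{-1/2}$ commutes with the semigroup, the variation of constants formula applied to~(\ref{e_h}) gives
$$y_h(t)=-\int_0^t e^{-(t-s)A_h}A_h^{-1/2}\Pi_h(u_H\cdot\nabla e_h)(s)\,ds+\int_0^t e^{-(t-s)A_h}A_h^{-1/2}\Pi_h\rho_{h,H}(s)\,ds=:N(t)+R(t).$$
I would estimate $N(t)$ in terms of $\|y_h\|_0$ itself, so that the conclusion follows from the generalized Gronwall lemma of~\cite[pp.~188--189]{Henry}, and estimate $R(t)$ directly by $C|\log H|H^4$.

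For $N(t)$, write $A_h^{-1/2}\Pi_h(u_H\cdot\nabla e_h)=A_h^{1/2}A_h^{-1}\Pi_h(u_H\cdot\nabla e_h)$, so by~(\ref{lee1}), $\|N(t)\|_0\le C\int_0^t(t-s)^{-1/2}\|A_h^{-1}\Pi_h(u_H\cdot\nabla e_h)(s)\|_0\,ds$. By~(\ref{nch1}) with $s=2$, the integrand is at most $Ch^2\|u_H\cdot\nabla e_h\|_0+\|A^{-1}\Pi(u_H\cdot\nabla e_h)\|_0$; the first summand is $\le Ch^2\|u_H\|_{L^\infty}\|e_h\|_1\le CH^4$ by Lemma~\ref{le:cota_pre_e1} and the (uniform) boundedness of $\|u_H\|_{L^\infty}$. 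For the second, split $u_H\cdot\nabla e_h=u\cdot\nabla e_h+(u_H-u)\cdot\nabla e_h$: using $u\cdot\nabla e_h={\cal F}(u,e_h)$ (as $\nabla\cdot u=0$) and ${\cal F}(u,e_h)=[{\cal F}(u,e_h)+{\cal F}(e_h,u)]-{\cal F}(e_h,u)$, Lemma~\ref{le:l3} bounds the bracket by $K(M_2)\|e_h\|_{-1}$, a duality and integration-by-parts argument (using $\|u\|_2\le M_2$ and $\|A^{-1}\Pi\phi\|_2\le C\|\phi\|_0$) bounds $\|A^{-1}\Pi{\cal F}(e_h,u)\|_0$ by $C\|e_h\|_{-1}$, and $\|A^{-1}\Pi((u_H-u)\cdot\nabla e_h)\|_0\le C\|u_H-u\|_0\|e_h\|_1\le CH^4$ by the case $r=2$ in~(\ref{gal_cuad}) and Lemma~\ref{le:cota_pre_e1}. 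Since $\|e_h\|_{-1}\le C\|A_h^{-1/2}e_h\|_0=C\|y_h\|_0$ for $e_h\in V_{h,r}$ (a consequence of the $H^1$-stability of $\Pi_h$, which follows from~(\ref{eq:error-Pi_h})), one gets $\|A_h^{-1}\Pi_h(u_H\cdot\nabla e_h)(s)\|_0\le C\|y_h(s)\|_0+CH^4$, hence $\|N(t)\|_0\le CH^4+C\int_0^t(t-s)^{-1/2}\|y_h(s)\|_0\,ds$.

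For $R(t)$, write $A_h^{-1/2}\Pi_h\rho_{h,H}=A_h^{1/2}A_h^{-1}\Pi_h\rho_{h,H}$. When $r=3$, Lemma~\ref{le:aux1} gives $\|A_h^{-1}\Pi_h\rho_{h,H}(s)\|_0\le Cs^{-1/2}|\log H|H^4$, so~(\ref{labeta}) yields $\|R(t)\|_0\le C|\log H|H^4$. When $r=4$ the weight in Lemma~\ref{le:aux1} is $s^{-1}$, which is not integrable against $(t-s)^{-1/2}$; I would then interpolate that estimate with the crude one $\|A_h^{-1/2}\Pi_h\rho_{h,H}(s)\|_0\le C\|\rho_{h,H}(s)\|_{-1}\le Cs^{-1/2}H^3$ coming from~(\ref{eq:nch11}) and~(\ref{segun}), obtaining $\|A_h^{-3/4}\Pi_h\rho_{h,H}(s)\|_0\le Cs^{-3/4}|\log H|^{1/2}H^4$; writing $A_h^{-1/2}\Pi_h\rho_{h,H}=A_h^{1/4}A_h^{-3/4}\Pi_h\rho_{h,H}$ and using $\|A_h^{1/4}e^{-(t-s)A_h}\|_0\le C(t-s)^{-1/4}$ together with $\int_0^t(t-s)^{-1/4}s^{-3/4}\,ds=B(3/4,1/4)$ again gives $\|R(t)\|_0\le C|\log H|H^4$. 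Collecting the two estimates, $\|y_h(t)\|_0\le C|\log H|H^4+C\int_0^t(t-s)^{-1/2}\|y_h(s)\|_0\,ds$ for $t\in(0,T]$, and the generalized Gronwall lemma gives the assertion.

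The step I expect to be the main obstacle is the case $r=4$ of $R(t)$: the sharp bound of Lemma~\ref{le:aux1} carries a temporal weight ($s^{-1}$) too singular to convolve against the semigroup factor $(t-s)^{-1/2}$, and the remedy is to trade part of that weight against the smoothing exponent of the semigroup by interpolating with the $O(H^3)$ estimate of $\rho_{h,H}$ in $\|\cdot\|_{-1}$ --- at the cost of the weaker power $H^4$ rather than $H^5$. This must be coordinated with a consistent use of the $t$-uniform low-order ($O(H^2)$) bounds for $u_H-u$ and of Lemma~\ref{le:cota_pre_e1} for $e_h$, so that no residual temporal singularity is left to spoil the Gronwall argument.
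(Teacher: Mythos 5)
Your argument is correct and follows essentially the same route as the paper: the Duhamel representation for $y_h=A_h^{-1/2}e_h$, the splitting of $\|A_h^{-1}\Pi_h((u_H\cdot\nabla)e_h)\|_0$ via~(\ref{nch1}) into an $h^2$-term plus duality bounds for $((u_H-u)\cdot\nabla)e_h$ and $(u\cdot\nabla)e_h$, the generalized Gronwall lemma, and then~(\ref{labeta}) combined with Lemmas~\ref{le:aux1} and~\ref{le:cota_pre_e1} for the $\rho_{h,H}$-term. The only departures are minor and sound: you bound the $h^2$-term and the $(u_H-u)$-term directly by $CH^4$ (via $\|u_H\|_{L^\infty}$ bounded and Lemma~\ref{le:cota_pre_e1}) instead of absorbing $h^2\|e_h\|_1$ into $\|y_h\|_0$ and carrying $H^2\max\|e_h\|_1$ to the end, you use Lemma~\ref{le:l3} where the paper integrates by parts directly, and for $r=4$ you repair the non-integrable $s^{-1}$ weight by a moment-inequality interpolation with~(\ref{segun}), whereas the paper implicitly invokes the lower-order ($r=3$) form of Lemma~\ref{le:aux1}, which is valid for quartic elements as well.
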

\unskip

\begin{proof}
The proof follows the steps of the proof of Lemma~4.6 in \cite{jbj_regularity}.
Let us consider $y_h(t)=A_h^{-1/2}e_h(t)$. From (\ref{e_h}) it follows
that
$$
y_h(t)=\int_0^t e^{-(t-s)A_h}
A_h^{-1/2}\Pi_h\bigl((u_H\cdot\nabla) e_h\bigr)\,ds
+ \int_0^t e^{-(t-s)A_h} A_h^{-1/2}\Pi_h\rho_{h,H}(s)\,ds.
$$
Applying (\ref{lee1})  we have that
\begin{eqnarray}\label{eq:antes_gro}
\left\| y_h(t)\right\|_0 &\le& \int_0^t \frac{C}{\sqrt{t-s}} \left\|
A_h^{-1}\Pi_h ((u_H\cdot \nabla)e_h)\right\|_0\,ds  \nonumber\\
&&\quad+\biggl\| \int_0^t e^{-(t-s)A_h}
A_h^{-1/2}\Pi_h\rho_{h,H}(s)\,ds \biggr\|_0.
\end{eqnarray}
Let us now bound $\left\|
A_h^{-1}\Pi_h ((u_H\cdot \nabla)e_h)\right\|_0$. We will argue as in the proof of \cite[(4.23) Lemma 4.4]{jbj_regularity}.
Let us first observe that $h^2\|e_h\|_1\le C\|A_h^{-1/2}e_h\|_0$. Using (\ref{nch1}) we get
\begin{eqnarray*}
\left\|
A_h^{-1}\Pi_h ((u_H\cdot \nabla)e_h)\right\|_0&\le& C h^2\|(u_H\cdot \nabla)e_h\|_0+\|A^{-1}\Pi((u_H\cdot \nabla)e_h)\|_0
\nonumber\\
&\le& C h^2\|e_h\|_1+\|A^{-1}\Pi((u_H\cdot \nabla)e_h)\|_0
\nonumber\\&\le&C \|A_h^{-1/2}e_h\|_0+\|A^{-1}\Pi((u_H\cdot \nabla)e_h)\|_0.
\end{eqnarray*}
Let us now bound the second term on the right hand side above. We write
\begin{eqnarray*}
\|A^{-1}\Pi((u_H\cdot \nabla)e_h)\|_0\le \|A^{-1}\Pi(((u_H-u)\cdot \nabla)e_h)\|_0
+\|A^{-1}\Pi((u\cdot \nabla)e_h)\|_0.
\end{eqnarray*}
For the first term arguing by duality we get
$$
\|A^{-1}\Pi(((u_H-u)\cdot \nabla)e_h)\|_0\le C \|u_H-u\|_0\|e_h\|_1\le C H^2\|e_h\|_1.
$$
For the second one, arguing again by duality and integrating by parts, we get
$$
\|A^{-1}\Pi((u\cdot \nabla)e_h)\|_0\le C\|e_h\|_{-1}\|u\|_2\le C \|y_h\|_0.
$$
We finally obtain
\begin{equation}
\label{jul11}
\left\|
A_h^{-1}\Pi_h ((u_H\cdot \nabla)e_h)\right\|_0\le C \|y_h\|_0+C H^2\|e_h\|_1.
\end{equation}
Going back to (\ref{eq:antes_gro}) we obtain
\begin{eqnarray}
\left\| y_h(t)\right\|_0 &\le& \int_0^t \frac{C}{\sqrt{t-s}} \|y_h\|_0\,ds  +\biggl\| \int_0^t e^{-(t-s)A_h}
A_h^{-1/2}\Pi_h\rho_{h,H}(s)\,ds \biggr\|_0\nonumber\\
&&\quad+C t^{1/2}H^2\max_{0\le s\le t}\|e_h(s)\|_1,
\label{dellema9}
\end{eqnarray}
so that a generalized Gronwall lemma \cite[pp.~188--189]{Henry} allow us to
write
$$
\max_{0\le t\le T}\left\| y_h(t)\right\|_0 \le C\biggl(\max_{0\le t\le T}
\biggl\|\int_0^t \!\!e^{-(t-s)A_h} A_h^{-1/2}\Pi_h\rho_{h,H}\,ds \biggr\|_0\!\!+
H^2\!\!\!\max_{0\le t\le T}\|e_h\|_1\biggr).
$$
Using (\ref{labeta}) we obtain
\begin{equation}
\label{jul12}
\max_{0\le t\le T}\left\| y_h(t)\right\|_0 \le C\biggl(
B\left(\frac{1}{2},\frac{1}{2}\right)\max_{0\le s\le T} s^{1/2}\bigl\|
A_{h}^{-1}\Pi_h \rho_{h,H} \bigr\|_0+H^2\max_{0\le t\le T}\|e_h\|_1\biggr),
\end{equation}
where, by applying Lemmas~\ref{le:aux1} and~\ref{le:cota_pre_e1}  the proof is finished.
\qquad\end{proof}
The proof of the following theorem follows the steps of the proof of \cite[Theorem 4.7]{jbj_regularity}.
\begin{Theorem}
\label{teo1}
Let $(u,p)$ be the solution
of {\rm (\ref{onetwo})--(\ref{ic})}. There exists a positive
constant $C$ such that the discrete velocity~$v_h$
defined by $(\ref{atenopv})$ and the approximation to $u$ over the finer grid, $\tilde u_{h}$, satisfy the following bound:
\begin{equation}\label{supercuad_tg}
 \qquad \|v_{h}(t)-\tilde u_{h}(t)\|_{0} \le
 \frac{C}{t^{1/2}}|\log(h)|\left(|\log(H)|
 H^{4}\right),\quad t\in(0,T],\quad r\ge 3.
\end{equation}
\end{Theorem}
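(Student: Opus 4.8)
The plan is to apply Duhamel's formula to equation~(\ref{e_h}) for $e_h=v_h-\tilde u_h$,
\[
e_h(t)=-\int_0^t e^{-(t-s)A_h}\Pi_h\bigl((u_H\cdot\nabla)e_h\bigr)(s)\,ds+\int_0^t e^{-(t-s)A_h}\Pi_h\rho_{h,H}(s)\,ds,
\]
and to estimate the two terms in the $L^2$ norm. The convective term can be handled in either of two ways. One may factor $e^{-(t-s)A_h}\Pi_h((u_H\cdot\nabla)e_h)=A_h^{1/2}e^{-(t-s)A_h}A_h^{-1/2}\Pi_h((u_H\cdot\nabla)e_h)$ and use~(\ref{lee1}) together with $\|A_h^{-1/2}\Pi_h((u_H\cdot\nabla)e_h)\|_0\le C\|e_h\|_0$ (a consequence of~(\ref{eq:lips1}); see Remark~\ref{re:1}) to produce a weakly singular kernel $\int_0^t(t-s)^{-1/2}\|e_h(s)\|_0\,ds$ to be absorbed by a generalized Gronwall lemma at the end. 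Alternatively, since Lemmas~\ref{le:cota_pre_e1} and~\ref{le:aux2} are already in hand, one may bound $\|A_h^{-1}\Pi_h((u_H\cdot\nabla)e_h)\|_0\le C\|A_h^{-1/2}e_h\|_0+CH^2\|e_h\|_1\le C|\log(H)|H^4$ uniformly in $t$, exactly as in~(\ref{jul11}) in the proof of Lemma~\ref{le:aux2}, treat the convective term as data, and dispense with Gronwall; in this second option that term contributes at most $C|\log(h)||\log(H)|H^4$ uniformly (splitting at $s=t/2$ and using Lemma~\ref{l4b} on $[t/2,t]$ as below).

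The core of the argument is the forcing term $\int_0^t A_he^{-(t-s)A_h}A_h^{-1}\Pi_h\rho_{h,H}(s)\,ds$, which I would split at $s=t/2$. On $[t/2,t]$, Lemma~\ref{le:aux1} gives $\|A_h^{-1}\Pi_h\rho_{h,H}(s)\|_0\le C|\log(H)|H^{r+1}(t/2)^{-(r-2)/2}$, and applying Lemma~\ref{l4b} with the time origin shifted to $t/2$ (where $A_h^{-1}\Pi_h\rho_{h,H}$ is continuous and bounded) bounds this piece by $C|\log(h)||\log(H)|H^{r+1}\tau(t)^{-(r-2)/2}$; this is where the extra $|\log(h)|$ enters. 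On $[0,t/2]$ one uses $\|A_he^{-(t-s)A_h}\|_0\le C(t-s)^{-1}\le 2C/t$, so this piece is at most $(C/t)\int_0^{t/2}\|A_h^{-1}\Pi_h\rho_{h,H}(s)\|_0\,ds$; for $r=3$ the $s^{-1/2}$ singularity of Lemma~\ref{le:aux1} is integrable and this yields directly $C|\log(H)|H^4\tau(t)^{-1/2}$, while for $r=4$ the $s^{-1}$ singularity is not integrable near $s=0$ and I would use in addition the complementary bound $\|A_h^{-1}\Pi_h\rho_{h,H}(s)\|_0\le C\|\rho_{h,H}(s)\|_{-1}\le CH^3 s^{-1/2}$, which follows from~(\ref{eq:nch11}) and~(\ref{segun}), keeping on each subinterval whichever of the two bounds is smaller (equivalently, interpolating between the $A_h^{-1}$ and $A_h^{-1/2}$ bounds of $\rho_{h,H}$ before integrating).

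Finally, to turn the intermediate estimate $C|\log(h)||\log(H)|H^{r+1}\tau(t)^{-(r-2)/2}$ into the stated $C|\log(h)||\log(H)|H^{4}\tau(t)^{-1/2}$ I would argue by a dichotomy in $t$: if $\tau(t)\ge H^2$ then $H^{r+1}\tau(t)^{-(r-2)/2}=H^{4}\tau(t)^{-1/2}\bigl(H^{2}/\tau(t)\bigr)^{(r-3)/2}\le H^{4}\tau(t)^{-1/2}$; if $\tau(t)\le H^2$ then $\tau(t)^{1/2}\le H$, so the uniform estimate $\|e_h(t)\|_0\le CH^{3}$ of Lemma~\ref{le:cota_pre_e1} already gives $CH^{3}\le CH^{4}\tau(t)^{-1/2}$. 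Collecting the pieces — and, if the convective term was kept as a kernel, invoking the generalized Gronwall lemma of~\cite[pp.~188--189]{Henry}, which preserves the $\tau(t)^{-1/2}$ weight — yields the bound.

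The step I expect to be most delicate is the behaviour near $t=0$: because $A_h^{-1}\Pi_h\rho_{h,H}$ is only singular like $\tau(t)^{-(r-2)/2}$, for $r=4$ it fails to be integrable against the $(t-s)^{-1}$ decay of $A_he^{-(t-s)A_h}$ on $[0,t/2]$, so one must play Lemma~\ref{le:aux1} off against the weaker-in-$H$ but milder-in-$t$ bound~(\ref{segun}) and lean on the uniform $O(H^{3})$ estimate of Lemma~\ref{le:cota_pre_e1} for short times; carrying out this bookkeeping so that the logarithmic factors combine into exactly $|\log(h)||\log(H)|$, rather than acquiring an extra power of $|\log(H)|$, is the part that requires the most care.
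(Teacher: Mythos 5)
Your proposal is correct in substance and draws on the same toolkit as the paper -- the Duhamel representation of (\ref{e_h}), the smoothing estimates (\ref{lee1}), Lemma~\ref{l4b} as the source of the factor $|\log(h)|$, Lemma~\ref{le:aux1} together with (\ref{eq:nch11})--(\ref{segun}) for $\rho_{h,H}$, and the generalized Gronwall lemma -- but it organizes the time weight differently. The paper works with the weighted variable $y_h(t)=t^{1/2}e_h(t)$: the weight produces the extra term $e_h/(2s^{1/2})$, which is absorbed through (\ref{labeta}) and Lemma~\ref{le:aux2}, and Lemma~\ref{l4b} is applied on the whole interval to $s^{1/2}A_h^{-1}\Pi_h\rho_{h,H}$, so the Gronwall step only ever involves a bounded majorant. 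You instead keep the unweighted equation, split the forcing integral at $s=t/2$ (Lemma~\ref{l4b} on the tail, the $(t-s)^{-1}$ decay of $A_he^{-(t-s)A_h}$ on the head), and recover the $t^{-1/2}$ weight a posteriori by the dichotomy $\tau(t)\gtrless H^2$ with Lemma~\ref{le:cota_pre_e1} for short times. Both routes work; the paper's weighting buys a proof with no dichotomy and no singular majorant, while your version makes explicit the $r=4$/small-time issue that the paper's appeal to Lemma~\ref{le:aux1} leaves implicit (for $r=4$ that lemma carries the weight $t^{-1}$, so one tacitly uses that the cubic-rate bounds also hold for the quartic element). Your remark that a stray extra $|\log(H)|$ is harmless because $|\log(H)|\le|\log(h)|$ is exactly the right bookkeeping.

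One point to tighten if you keep the convective term as a kernel (your first option): the singular Gronwall lemma needs a locally integrable majorant valid on all of $(0,t]$, and for $r=4$ the bound $C|\log(h)||\log(H)|H^{5}\tau(s)^{-1}$ is not locally integrable, so the dichotomy cannot be postponed entirely to the final step. You must fold the minimum of the two bounds (Lemma~\ref{le:aux1} versus (\ref{eq:nch11})--(\ref{segun})) into the estimate of the forcing integral itself, so that it is bounded by $C|\log(h)||\log(H)|H^{4}\tau(t)^{-1/2}$ for every $t$ (for $\tau(t)\le H^{2}$ the (\ref{segun})-based bound together with (\ref{labeta}) already gives $CH^{3}\le CH^{4}\tau(t)^{-1/2}$); with that majorant Henry's lemma applies and preserves the weight. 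Your second option -- treating the convective term as data via (\ref{jul11}), Lemma~\ref{le:aux2} and Lemma~\ref{le:cota_pre_e1}, which is not circular since those results are established independently -- avoids this issue altogether.
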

\unskip
\begin{proof}
Let us consider $y_h(t)=t^{1/2}e_h(t)$. From (\ref{e_h}) and an easy calculation we get
$$
\dot y_h+A_h y_h +t^{1/2}\Pi_h(u_H\cdot \nabla e_h)=t^{1/2}\Pi_h \rho_{h,H}+\frac{1}{2 t^{1/2}}e_h.
$$
Then,
$$
\displaylines{
y_h(t)=\int_0^t e^{-A_h(t-s)}s^{1/2}\Pi_h(u_H\cdot \nabla e_h)~ds
\hfill\cr
\bet
\hfill {}+
\int_0^t e^{-A_h(t-s)}\left(s^{1/2}\Pi_h \rho_{h,H}+\frac{1}{2s^{1/2}}e_h\right)~ds.
}
$$
Applying~(\ref{eq:lips1}) we get
\begin{eqnarray}\label{eq:comolip}
\|A_h^{-1/2}\Pi_h (u_H\cdot \nabla )e_h)\|_0\le C\|e_h\|_0.
\end{eqnarray}
Then, using (\ref{lee1}) we obtain
$$
\biggl\|\int_0^t e^{-A_h(t-s)}s^{1/2}\Pi_h (u_H\cdot \nabla e_h)\,ds
\biggr\|_0
\le
C\int_0^t\frac{\|y_h\|_0} {\sqrt{t-s}}~ds.
$$
Applying a generalized
Gronwall lemma \cite[pp.~188--189]{Henry}, it follows that
\begin{align}
 \max_{0\le t\le T}\left\| y_h(t)\right\|_0
 \le &C \biggl(\max_{0\le
t\le T} \biggl\| \int_0^t\! e^{-A_h(t-s)}s^{1/2}\Pi_h\rho_{h,H}\,ds \biggr\|_0
\nonumber\\
{}&+ \max_{0\le s\le t}\biggl\|\int_0^t\!
e^{-A_h(t-s)}\frac{e_h}{s^{1/2}}\,ds \biggr\|_0\biggr).
\label{delteo1}
\end{align}
Applying now Lemma~\ref{l4b} and (\ref{labeta}) we have
\begin{align*}
 \max_{0\le t\le T}\left\| y_h(t)\right\|_0 \le &
 C\Bigl( |\log(h)|
\max_{0\le t\le T} \bigl\|s^{1/2}A_h^{-1}\Pi_h\rho_{h,H}(s)\bigr\|_0 \\
&{}+
B\left(\frac{1}{2},\frac{1}{2}\right) \max_{0\le t\le
T}\bigl\|A_h^{-1/2}e_h(s)\bigr\|_0\Bigr),
\end{align*}
where Lemmas~\ref{le:aux1} and~\ref{le:aux2} finish the proof.
\end{proof}
\begin{Theorem}
\label{teo2}
Let $(u,p)$ be the solution
of {\rm (\ref{onetwo})--(\ref{ic})}. There exists a positive
constant $C$ such that the discrete velocity~$v_h$
defined by $(\ref{atenopv})$ and the approximation to $u$ over the finer grid, $\tilde u_{h}$, satisfy the following bound for $r\ge 3$
\begin{equation}\label{supercuad_tg_1}
 \qquad \|v_{h}(t)-\tilde u_{h}(t)\|_{1} \le
 \frac{C}{t}|\log(h)|\left(|\log(H)|
 H^{4}+T^{1/2}h^3\right),\quad t\in(0,T].
\end{equation}
\end{Theorem}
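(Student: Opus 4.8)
The plan is to follow the argument of Theorem~\ref{teo1}, replacing the weight $t^{1/2}$ by $t$ so as to gain the extra factor $t^{-1/2}$ needed when passing from the $L^2$ to the $H^1$ norm. With $e_h=v_h-\tilde u_h$ as in (\ref{e_h}), set $y_h(t)=t\,e_h(t)$. Multiplying (\ref{e_h}) by $t$ and using $\dot y_h=e_h+t\dot e_h$ one obtains
\begin{equation*}
\dot y_h+A_h y_h+t\,\Pi_h(u_H\cdot\nabla e_h)=t\,\Pi_h\rho_{h,H}+e_h,\qquad y_h(0)=0 ,
\end{equation*}
and hence, by the variation of constants formula followed by the application of $A_h^{1/2}$,
\begin{equation*}
A_h^{1/2}y_h(t)=\int_0^t A_h^{1/2}e^{-(t-s)A_h}\Bigl(-s\,\Pi_h(u_H\cdot\nabla e_h)(s)+s\,\Pi_h\rho_{h,H}(s)+e_h(s)\Bigr)\,ds .
\end{equation*}
Since $\|A_h^{1/2}y_h(t)\|_0=t\|e_h(t)\|_1$ and $y_h$ is continuous on $[0,T]$ with $y_h(0)=0$, it suffices to show that $\|A_h^{1/2}y_h(t)\|_0$ is bounded on $[0,T]$ by $C|\log(h)|\bigl(|\log(H)|H^4+T^{1/2}h^3\bigr)$; this is done by estimating the three terms on the right-hand side.

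For the nonlinear term I would use the Lipschitz-type bound (\ref{eq:lips0}) of Lemma~\ref{le:lips01}, applicable because $u_H$ and $\tilde u_h$ (hence $e_h$) satisfy the threshold condition of Remark~\ref{re:1}, to get $\|\Pi_h(u_H\cdot\nabla e_h)(s)\|_0\le C\|e_h(s)\|_1=(C/s)\|A_h^{1/2}y_h(s)\|_0$; together with (\ref{lee1}) this yields the contribution $\int_0^t (C/\sqrt{t-s})\,\|A_h^{1/2}y_h(s)\|_0\,ds$, which the generalized Gronwall lemma \cite[pp.~188--189]{Henry} absorbs. For the last term, (\ref{lee1}) gives $\|A_h^{1/2}e^{-(t-s)A_h}e_h(s)\|_0\le (C/\sqrt{t-s})\|e_h(s)\|_0$; inserting the $L^2$ bound of Theorem~\ref{teo1} and using $\int_0^t(t-s)^{-1/2}s^{-1/2}\,ds=B(\tfrac12,\tfrac12)$ as in (\ref{labeta}), this contribution is bounded by $C|\log(h)||\log(H)|H^4$.

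The remaining term, containing $\rho_{h,H}$, is the main one. Writing $A_h^{1/2}e^{-(t-s)A_h}=A_h e^{-(t-s)A_h}A_h^{-1/2}$ and applying Lemma~\ref{l4b} to the function $f(s)=s\,A_h^{-1/2}\Pi_h\rho_{h,H}(s)$ (continuous on $[0,T]$ and vanishing at $s=0$) bounds it by $C|\log(h)|\max_{0\le s\le T}\|s\,A_h^{-1/2}\Pi_h\rho_{h,H}(s)\|_0$, and (\ref{eq:nch11}) reduces this to $C|\log(h)|\max_{0\le s\le T}s\,\|\rho_{h,H}(s)\|_{-1}$. For $r=3$, (\ref{primer}) of Lemma~\ref{le:aux1b} gives directly $s\|\rho_{h,H}(s)\|_{-1}\le C|\log(H)|H^4+Cs^{1/2}h^3\le C|\log(H)|H^4+CT^{1/2}h^3$. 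For $r=4$ the estimate from (\ref{primer}) behaves like $s^{-1/2}|\log(H)|H^5$ near $s=0$, so there I would instead use (\ref{segun}) on the interval $0<s\le H^2$ (where $s\|\rho_{h,H}(s)\|_{-1}\le Cs^{1/2}H^3\le CH^4$) and (\ref{primer}) on $H^2\le s\le T$ (where $s^{-1/2}|\log(H)|H^5\le |\log(H)|H^4$), so that $\max_s s\|\rho_{h,H}(s)\|_{-1}\le C|\log(H)|H^4+Ch^4$, the last term being dominated by $T^{1/2}h^3$. Collecting the three estimates and applying the Gronwall lemma yields $\max_{0\le t\le T}t\|e_h(t)\|_1\le C|\log(h)|\bigl(|\log(H)|H^4+T^{1/2}h^3\bigr)$, which is (\ref{supercuad_tg_1}).

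The step I expect to be delicate is precisely the control of the $\rho_{h,H}$ term near $s=0$ when $r=4$: the bound coming from (\ref{primer}) is not uniformly bounded there, so it must be traded on a short initial interval (of length $\sim H^2$) for the coarser but integrable bound (\ref{segun}) of Lemma~\ref{le:aux1b}, the length of the interval being tuned so that the resulting loss stays at the level $H^4$. A secondary point is the verification that $\tilde u_h$, and therefore $e_h$, remains within the threshold required by Lemma~\ref{le:lips01}; this is guaranteed by the continuation argument of \cite[Remark 4.1]{jbj_regularity} already invoked in Remark~\ref{re:1}.
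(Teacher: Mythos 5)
Your proposal is correct and follows essentially the same route as the paper's proof: the weighted quantity $t\,A_h^{1/2}e_h$, the Duhamel representation with the generalized Gronwall lemma absorbing the convective term, and Lemma~\ref{l4b} combined with (\ref{eq:nch11}) and Lemma~\ref{le:aux1b} for the $\rho_{h,H}$ term. The only (harmless) deviations are that the paper bounds the term $\int_0^t e^{-A_h(t-s)}A_h^{1/2}e_h\,ds$ by Lemma~\ref{l4b} together with the $\|A_h^{-1/2}e_h\|_0$ estimate of Lemma~\ref{le:aux2}, where you instead insert the $L^2$ bound of Theorem~\ref{teo1} and the Beta integral, and your explicit splitting between (\ref{segun}) near $s=0$ and (\ref{primer}) away from it for $r=4$ makes precise a point the paper leaves implicit; both yield the stated bound.
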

\unskip
\begin{proof}
Let us define $y_h(t)=t A_h^{1/2} e_h(t)$, where $e_h(t)=v_h(t)-\tilde u_h(t)$.
Arguing exactly as in the proof
of Theorem~\ref{teo1}, instead of~(\ref{delteo1}) we now arrive at
\begin{align*}
 \max_{0\le t\le T}\left\| y_h(t)\right\|_0
\le &C \biggl( \max_{0\le t\le T}
 \biggl\| \int_0^t\! e^{-A_h(t-s)}s A_h^{1/2}\Pi_h\rho_{h,H}\,ds \biggr\|_0
\\
&{}+  \max_{0\le t\le T}\biggl\|\int_0^t\!
e^{-A_h(t-s)}A_h^{1/2}{e_h}\,ds \biggr\|_0\biggr).
\end{align*}
Applying now Lemma~\ref{l4b} we  get
$$
\max_{0\le t\le T}
\|y_h(t)\|_0\le C|\log(h)|\left(\max_{0\le t\le T} \left\|s A_h^{-1/2}\Pi_h \rho_{h,H}\right\|_0
+\max_{0\le t \le T}\left\|A_h^{-1/2} e_h\right\|_0\right),
$$
where Lemmas~\ref{le:aux1b} and~\ref{le:aux2} finish the proof.
\end{proof}

\begin{lema}
\label{le:Cons-4.2}
Let $(u,p)$ be the solution of {\rm (\ref{onetwo})--(\ref{ic})}. Then there
exists a positive constant $C$ such that
the discrete velocity $v_h$ defined by~$(\ref{atenopv})$
and the approximation to $u$ over the finer grid, $\tilde u_{h}$ satisfy
the following bound:
\begin{eqnarray*}
\|A_h^{-1}(v_h(t)-\tilde u_h(t))\|_0 \le C H^5,\quad r\ge 4,\quad t\in (0,T].
\end{eqnarray*}
\end{lema}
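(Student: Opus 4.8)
The plan is to mimic the variation-of-constants argument already used in Lemmas~\ref{le:cota_pre_e1} and~\ref{le:aux2}, now applied to $y_h(t)=A_h^{-1}e_h(t)$ with $e_h=v_h-\tilde u_h$. From the error equation (\ref{e_h}) one writes
$$
y_h(t)=\int_0^t e^{-(t-s)A_h}A_h^{-1}\Pi_h\bigl((u_H\cdot\nabla)e_h\bigr)\,ds
+\int_0^t e^{-(t-s)A_h}A_h^{-1}\Pi_h\rho_{h,H}(s)\,ds.
$$
For the first (convective) integral the idea is to insert an extra factor $A_h^{1/2}$, i.e.\ use $\|e^{-(t-s)A_h}A_h^{-1}w\|_0=\|e^{-(t-s)A_h}A_h^{1/2}\cdot A_h^{-3/2}w\|_0\le (2e(t-s))^{-1/2}\|A_h^{-3/2}\Pi_h((u_H\cdot\nabla)e_h)\|_0$ via (\ref{lee1}). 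Then I would bound $\|A_h^{-3/2}\Pi_h((u_H\cdot\nabla)e_h)\|_0$ in terms of $\|A_h^{-1}e_h\|_0$ by exactly the same duality/integration-by-parts splitting as in Lemma~\ref{le:aux2}: split $u_H\cdot\nabla e_h=((u_H-u)\cdot\nabla)e_h+(u\cdot\nabla)e_h$, use (\ref{nch1}) and the $H^2$-regularity of $u$ together with the Galerkin bound $\|u_H-u\|_0\le CH^2$ from (\ref{gal_cuad}), producing a term $\le C\|y_h\|_0$ plus a lower-order remainder controlled by $H^2$ times $\|A_h^{-1/2}e_h\|_0$, which Lemma~\ref{le:aux2} already bounds by $CH^2|\log(H)|H^4$. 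After applying the generalized Gronwall lemma \cite[pp.~188--189]{Henry} and (\ref{labeta}), the problem reduces to estimating $\max_{0\le s\le T}s^{1/2}\|A_h^{-3/2}\Pi_h\rho_{h,H}(s)\|_0$.

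The second (source) term is therefore the crux, and it is where the restriction $r\ge 4$ enters. I would estimate $\|A_h^{-3/2}\Pi_h\rho_{h,H}\|_0$ by interpolating between $\|A_h^{-1}\Pi_h\rho_{h,H}\|_0$ (Lemma~\ref{le:aux1}, giving $Ct^{-(r-2)/2}|\log(H)|H^{r+1}$) and $\|A_h^{-2}\Pi_h\rho_{h,H}\|_0$; alternatively, and more directly, one proves a fresh $\|A_h^{-3/2}\Pi_h\rho_{h,H}\|_0\le Ct^{-1/2}H^5$ bound for $r\ge4$ by repeating the decomposition $\rho_{h,H}=\rho_{h,H}^1+\rho_{h,H}^2$ of (\ref{decomrho})--(\ref{decomposition}). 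Using (\ref{nch1}) with $s$ between $1$ and $2$ (or rather the analogous statement $\|A_h^{-3/2}\Pi_h w\|_0\le Ch^{3}\|w\|_{0}+\|A^{-3/2}\Pi w\|_0$, obtained by interpolating (\ref{nch1})), the $h$-contribution is harmless since $r\ge4$ makes $h^3\|\rho_{h,H}\|_0$ already $O(h^{r+2})$. For the $A^{-3/2}\Pi$-part I would argue by duality against $A^{-3/2}\Pi\phi$ with $\|\phi\|_0=1$, noting $\|A^{-3/2}\Pi\phi\|_{W^{1,\infty}}\le C$ by elliptic regularity and Sobolev embedding (this is where $\Omega$ being smooth is used, as in (\ref{eq:error-A-1Pi_h})); the term $((u-u_H)\cdot\nabla)u$ from (\ref{decomposition}) is then controlled by $\|u_H-u\|_{-1}\|u\|_2\le Ct^{-(r-2)/2}|\log(H)|H^{r+1}$ via (\ref{menos1u_H}), which for $r\ge4$ is $O(H^5)$ up to the log; the cross term $((u-u_H)\cdot\nabla)(v_h-u)$ is smaller still, bounded by $C\|u_H-u\|_0\|v_h-u\|_1\le CH^2h^{r-1}$; and $\rho_{h,H}^2=(u\cdot\nabla)(v_h-u)$ is handled by Lemma~\ref{le:l3} in terms of $\|v_h-u\|_{-1}\le Ct^{-(r-2)/2}h^{r+1}$ from (\ref{eq:cota-z-1}) and (\ref{eq:error-Pi_h}).

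I would expect the main obstacle to be bookkeeping the logarithmic factor: Lemma~\ref{le:aux1} and the bound (\ref{menos1u_H}) carry a $|\log(H)|$, so a naive estimate gives $C|\log(H)|H^5$ rather than the clean $CH^5$ claimed. The resolution is that the $|\log(H)|$ comes only through the super-convergence estimate (\ref{supercuad}) for $\|u_H-v_H\|_0$, whereas in the $A^{-3/2}$ norm one has the extra power of $h$ (or $H$) available; more precisely, one should bound $\|A^{-3/2}\Pi(I-\Pi_H)u\|_0$ and the relevant piece of $\|u_H-v_H\|$ using (\ref{eq:error-A-1Pi_h}) with $m=2$, i.e.\ $\|A^{-1}\Pi(I-\Pi_H)u\|_0\le CH^{l+\min(2,r-2)}\|u\|_l$, which for $r\ge4$ gives the genuine $H^{r+1}$ with no logarithm. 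Thus the careful point is to route every appearance of $u_H-u$ through its $A^{-1}\Pi$- or $A^{-3/2}\Pi$-norm (not through the $L^2$ super-convergence bound) so that the $|\log(H)|$ never appears, leaving the stated $\|A_h^{-1}(v_h-\tilde u_h)\|_0\le CH^5$. A final application of the Gronwall/Beta-function reduction then closes the argument.
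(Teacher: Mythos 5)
Your skeleton (Duhamel formula for $y_h=A_h^{-1}e_h$ plus a generalized Gronwall lemma) is the same as the paper's, but the way you treat the source term contains a genuine gap that your own ``resolution'' does not close. You reduce, via (\ref{labeta}), to bounding $\max_{0\le s\le T}s^{1/2}\|A_h^{-3/2}\Pi_h\rho_{h,H}(s)\|_0$ and claim a bound $Ct^{-1/2}H^5$. For $r=4$ this cannot be extracted from the estimates available: every $H^5$-order bound on the relevant pieces carries the weight $t^{-(r-2)/2}=t^{-1}$, not $t^{-1/2}$ --- see (\ref{menos1u_H}), Lemma~\ref{le:aux1}, (\ref{supercuad}) and (\ref{eq:cota-z-1}) --- so the weighted sup $s^{1/2}\|\cdot\|$ behaves like $s^{-1/2}H^5$ and is not uniformly bounded. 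Your proposed fix (route $u-u_H$ through $A^{-1}\Pi$- or $A^{-3/2}\Pi$-norms via (\ref{eq:error-A-1Pi_h})) only removes the logarithm coming from the projection part $(I-\Pi_H)u$; the discrete part $u_H-\Pi_Hu$ still carries the $t^{-1}$ weight, which originates in the lack of regularity at $t=0$, not in the superconvergence log, and a stronger time weight in the definition of $y_h$ would destroy the uniform-in-$t$ bound $CH^5$ that the lemma asserts. In short, a pointwise-in-time reduction of the Duhamel source integral cannot deliver the clean $CH^5$.

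The paper avoids this precisely by \emph{not} reducing to a weighted sup: it keeps the whole integral $\int_0^t e^{-(t-s)A_h}A_h^{-1}\Pi_h\rho_{h,H}(s)\,ds$ and estimates it directly, arguing as in (4.60) of Lemma~4.14 of the reference \cite{jbj_regularity}, with the decomposition $\rho_{h,H}^1=((u_H-u)\cdot\nabla)(v_h-u)+((u_H-u)\cdot\nabla)u$ and, crucially, the \emph{time-integrated} Galerkin bounds (\ref{gal_int}) (with $h$ replaced by $H$) combined with (\ref{gal_cuad}); Cauchy--Schwarz in time against the semigroup smoothing absorbs the singular weights and produces $CH^5$ with no logarithm. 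Your proposal never invokes (\ref{gal_int}), which is the missing ingredient. A secondary, repairable issue: for the convective integral the paper cites \cite[Lemma 4.13]{jbj_regularity} to get (\ref{jul6}), i.e.\ a bound $C\bigl((t-s)^{-1/2}+s^{-1/2}\bigr)\|A_h^{-1}e_h\|_0+CH^3s^{-1/2}\|e_h\|_1$, and then uses Lemma~\ref{le:cota_pre_e1} so that $H^3\|e_h\|_1\le CH^5$; your sketch of a bound for $\|A_h^{-3/2}\Pi_h((u_H\cdot\nabla)e_h)\|_0$ in terms of $\|A_h^{-1}e_h\|_0$ would additionally require an $s=3$ analogue of (\ref{nch1}) and a discrete-to-continuous negative-norm comparison that the paper does not provide, so you should not present it as ``exactly the same duality argument as in Lemma~\ref{le:aux2}''.
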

\begin{proof}
Let us consider $y_h(t)=A_h^{-1}e_h(t)$. From (\ref{e_h}) it follows
that
$$
y_h(t)=\int_0^t e^{-(t-s)A_h}
A_h^{-1}\Pi_h\bigl((u_H\cdot\nabla) e_h\bigr)\,ds
+ \int_0^t e^{-(t-s)A_h} A_h^{-1}\Pi_h\rho_{h,H}(s)\,ds.
$$
We first observe that arguing exactly as in the proof of \cite[Lemma 4.13]{jbj_regularity}
we get
\begin{align}
\left\| e^{-(t-s)A_h}
A_h^{-1}\Pi_h\bigl((u_H\cdot\nabla) e_h\bigr)\right\|_0\le &C \left(\frac{1}{\sqrt{t-s}}+\frac{1}{\sqrt{s}}\right)\|A_h^{-1}e_h\|_0\nonumber\\
&{}+
C \frac{H^3}{\sqrt{s}} \|e_h\|_1.
\label{jul6}
\end{align}
Then,
\begin{eqnarray*}
\|y_h(t)\|_0&\le& C\int_0^t \left(\frac{1}{\sqrt{t-s}}+\frac{1}{\sqrt{s}}\right)\|y_h(s)\|_0~ds\nonumber\\
&&+\left\|\int_0^te^{-(t-s)A_h} A_h^{-1}\Pi_h\rho_{h,H}(s)\,ds\right\|+C t^{1/2}H^3\max_{0\le s\le t}\|e_h(s)\|_1
\end{eqnarray*}
Applying now \cite[Lemma 4.9]{jbj_regularity} we get
\begin{align}
\max_{0\le t\le T}\|y_h(t)\|_0\le &C\max_{0\le t\le T}\left\|\int_0^te^{-(t-s)A_h} A_h^{-1}\Pi_h\rho_{h,H}(s)\,ds\right\|
\nonumber\\
&{}+C H^3 \max_{0\le t\le T}\|e_h(t)\|_1.
\label{jul8}
\end{align}
For the second term on the right-hand-side above we apply Lemma~\ref{le:cota_pre_e1}.
For the first one we use the decomposition
$$
\rho_{h,H}=\rho_{h,H}^1+\rho_{h,H}^2,\quad \rho_{h,H}^1=((u_H-u)\cdot \nabla (v_h-u))+((u_H-u)\cdot \nabla u),
$$
We now argue exactly as in \cite[(4.60) in Lemma 4.14]{jbj_regularity},
replacing one of the occurrences of~$z$ there by~$u-u_H$ and
making use of~(\ref{gal_cuad}) and~(\ref{gal_int}) with $h$ replaced
by~$H$. This will
allow us  to obtain
$$
\max_{0\le t\le T}\left\|\int_0^te^{-(t-s)A_h} A_h^{-1}\Pi_h\rho_{h,H}(s)\,ds\right\|\le C H^5,
$$
which concludes the proof.
\end{proof}
\begin{Theorem}
\label{teo3}
Let $(u,p)$ be the solution
of {\rm (\ref{onetwo})--(\ref{ic})}. There exists a positive
constant $C$ such that the discrete velocity~$v_h$
defined by $(\ref{atenopv})$ and the approximation to $u$ over the finer grid, $\tilde u_{h}$, satisfy the following bound:
\begin{equation}\label{supercuad_tg}
 \qquad \|v_{h}(t)-\tilde u_{h}(t)\|_{0} \le
 \frac{C}{t}|\log(h)|\left(|\log(H)|
 H^{5}\right),\quad t\in(0,T],\quad r\ge 4.
\end{equation}
\end{Theorem}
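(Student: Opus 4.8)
The plan is to repeat the argument used for Theorem~\ref{teo1}, but with the heavier temporal weight $y_h(t)=t\,e_h(t)$ in place of $t^{1/2}e_h(t)$, where $e_h=v_h-\tilde u_h$ satisfies~(\ref{e_h}). Differentiating $y_h$ and using~(\ref{e_h}) gives
\[
\dot y_h+A_h y_h+t\,\Pi_h(u_H\cdot\nabla e_h)=e_h+t\,\Pi_h\rho_{h,H},\qquad y_h(0)=0,
\]
whence, by the variation of constants formula,
\[
y_h(t)=\int_0^t e^{-(t-s)A_h}\Bigl(e_h(s)+s\,\Pi_h\rho_{h,H}(s)-s\,\Pi_h(u_H\cdot\nabla e_h)(s)\Bigr)ds.
\]

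First I would dispose of the convective term exactly as in Theorem~\ref{teo1}: factoring $e^{-(t-s)A_h}=A_h^{1/2}e^{-(t-s)A_h}A_h^{-1/2}$, using~(\ref{lee1}) and the bound $\|A_h^{-1/2}\Pi_h(u_H\cdot\nabla e_h)\|_0\le C\|e_h\|_0$ coming from~(\ref{eq:lips1}) (cf.~(\ref{eq:comolip})), this contribution is at most $C\int_0^t(t-s)^{-1/2}\|y_h(s)\|_0\,ds$, a term that the generalized Gronwall lemma~\cite[pp.~188--189]{Henry} absorbs. It then remains to estimate, uniformly in $t\in[0,T]$, the two integrals $\bigl\|\int_0^t e^{-(t-s)A_h}e_h(s)\,ds\bigr\|_0$ and $\bigl\|\int_0^t e^{-(t-s)A_h}s\,\Pi_h\rho_{h,H}(s)\,ds\bigr\|_0$.

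For the first one I would write $e_h=A_h\Pi_h(A_h^{-1}e_h)$ --- legitimate since $e_h\in V_{h,r}$, so that $\Pi_h$ acts as the identity on $A_h^{-1}e_h$ --- and apply Lemma~\ref{l4b} with $f(s)=A_h^{-1}e_h(s)$, obtaining $C|\log(h)|\max_{0\le s\le T}\|A_h^{-1}e_h(s)\|_0\le C|\log(h)|H^5$ by Lemma~\ref{le:Cons-4.2} (this is the only place the restriction $r\ge 4$ enters). For the second one, writing similarly $s\,\Pi_h\rho_{h,H}=A_h\Pi_h\bigl(s\,A_h^{-1}\Pi_h\rho_{h,H}\bigr)$ and invoking Lemma~\ref{l4b} gives the bound $C|\log(h)|\max_{0\le s\le T}s\,\|A_h^{-1}\Pi_h\rho_{h,H}(s)\|_0$, and by Lemma~\ref{le:aux1} (with $r=4$) the factor $s$ exactly cancels the $s^{-(r-2)/2}=s^{-1}$ singularity, so this is $\le C|\log(h)||\log(H)|H^5$. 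Combining these with the Gronwall step yields $\max_{0\le t\le T}\|y_h(t)\|_0\le C|\log(h)||\log(H)|H^5$, and dividing by $t$ gives the asserted estimate.

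The main obstacle is the term $e_h(s)$ produced by differentiating the weight $t\,e_h$. In Theorem~\ref{teo1} the analogous term $\frac{1}{2}t^{-1/2}e_h$ was harmless because it could be absorbed via~(\ref{labeta}) at the cost of only $\max\|A_h^{-1/2}e_h\|_0$, which is $O(|\log(H)|H^4)$ by Lemma~\ref{le:aux2}. Here, by contrast, one must control it at order $H^5$, and this is precisely what the auxiliary Lemma~\ref{le:Cons-4.2} --- whose proof in turn rests on the $H^1$-bound of Theorem~\ref{teo2} and on Lemma~\ref{le:aux2} --- delivers. Once that lemma is in hand, the rest is a routine transcription of the proof of Theorem~\ref{teo1}.
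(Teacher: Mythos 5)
Your proposal is correct and follows essentially the same route as the paper: weight $y_h=t\,e_h$, absorb the convective term via the Gronwall argument of Theorem~\ref{teo1} using $\|A_h^{-1/2}\Pi_h(u_H\cdot\nabla e_h)\|_0\le C\|e_h\|_0$, then apply Lemma~\ref{l4b} to both remaining integrals and conclude with Lemmas~\ref{le:aux1} and~\ref{le:Cons-4.2}. The only difference is that you spell out the factorizations $e_h=A_h(A_h^{-1}e_h)$ and $s\Pi_h\rho_{h,H}=A_h(sA_h^{-1}\Pi_h\rho_{h,H})$, which the paper leaves implicit.
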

\unskip
\begin{proof}
Let us define $y_h(t)=t e_h(t)$, where $e_h(t)=v_h(t)-\tilde u_h(t)$.
Arguing as in the proof
of Theorem~\ref{teo1}, instead of~(\ref{delteo1}) we now arrive at
\begin{align*}
 \max_{0\le t\le T}\left\| y_h(t)\right\|_0  \le &C \biggl(\max_{0\le
t\le T} \biggl\| \int_0^t\! e^{-A_h(t-s)}s\Pi_h\rho_{h,H}\,ds \biggr\|_0
\nonumber
\\&{}+ \max_{0\le t\le T}\biggl\|\int_0^t\|
e^{-A_h(t-s)} e_h\,ds \biggr\|_0\biggr).
\end{align*}
As in the proof of Theorem~\ref{teo2}, applying now Lemma~\ref{l4b}
to both terms on the right-hand side above we
get
$$
\max_{0\le t\le T} \|y_h(t)\|_0\le C|\log(h)|\left(\max_{0\le t\le T} \left\|s A_h^{-1}\Pi_h \rho_{h,H}\right\|_0
+\max_{0\le t \le T}\left\|A_h^{-1} e_h\right\|_0\right).
$$
where now Lemmas~\ref{le:aux1} and~\ref{le:Cons-4.2} finish the proof.
\end{proof}
\begin{lema}
\label{le:ultipri}
Let $(u,p)$ be the solution of {\rm (\ref{onetwo})--(\ref{ic})}. Then there
exists a positive constant $C$ such that
the discrete velocity $v_h$ defined by~$(\ref{atenopv})$
and the approximation to $u$ over the finer grid, $\tilde u_{h}$ satisfy
the following bound:
\begin{eqnarray*}
\|A_h^{-1/2}(v_h(t)-\tilde u_h(t))\|_0 \le \frac{C}{t^{1/2}} |\log(h)|H^5,\quad r\ge 4,\quad t\in (0,T].
\end{eqnarray*}
\end{lema}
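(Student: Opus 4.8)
The plan is to adapt the weighted-norm technique from the proof of Theorem~\ref{teo1}, now with weight $t^{1/2}$ together with an extra factor $A_h^{-1/2}$, in the spirit of the proofs of Lemmas~\ref{le:aux2} and~\ref{le:Cons-4.2}. Write $e_h=v_h-\tilde u_h$, which solves~(\ref{e_h}), and set $y_h(t)=t^{1/2}A_h^{-1/2}e_h(t)$, so that the claim is equivalent to $\max_{0\le t\le T}\|y_h(t)\|_0\le C|\log(h)|H^5$. Applying $A_h^{-1/2}$ to~(\ref{e_h}), multiplying by $t^{1/2}$, and accounting for the term $\frac12 t^{-1/2}A_h^{-1/2}e_h$ produced by differentiating the weight, Duhamel's formula represents $y_h(t)$ as $I_1+I_2+I_3$, where $I_1=-\int_0^t e^{-(t-s)A_h}s^{1/2}A_h^{-1/2}\Pi_h(u_H\cdot\nabla e_h)\,ds$ is the convective contribution, $I_2=\int_0^t e^{-(t-s)A_h}s^{1/2}A_h^{-1/2}\Pi_h\rho_{h,H}(s)\,ds$ is the $\rho_{h,H}$ contribution, and $I_3=\frac12\int_0^t e^{-(t-s)A_h}s^{-1/2}A_h^{-1/2}e_h(s)\,ds$ comes from the derivative of the weight. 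In each of the three terms I would factor $e^{-(t-s)A_h}A_h^{-1/2}=A_h^{1/2}e^{-(t-s)A_h}A_h^{-1}$, so that by~(\ref{lee1}) the semigroup contributes the integrable factor $(t-s)^{-1/2}$ while $A_h^{-1}\Pi_h(\cdot)$, respectively $A_h^{-1}e_h$, is left inside.

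For $I_3$ this gives $\|I_3\|_0\le C\int_0^t (t-s)^{-1/2}s^{-1/2}\|A_h^{-1}e_h(s)\|_0\,ds$; since $r\ge4$ I may insert the bound $\|A_h^{-1}e_h(s)\|_0\le CH^5$ of Lemma~\ref{le:Cons-4.2}, and the remaining $s^{-1/2}(t-s)^{-1/2}$ integrates to a Beta-function constant as in~(\ref{labeta}), so $\|I_3\|_0\le CH^5$. For $I_2$ one gets $\|I_2\|_0\le C\int_0^t (t-s)^{-1/2}s^{1/2}\|A_h^{-1}\Pi_h\rho_{h,H}(s)\|_0\,ds$, and Lemma~\ref{le:aux1} (for the relevant value $r=4$) bounds the integrand by $C(t-s)^{-1/2}s^{-1/2}|\log(H)|H^5$, whence $\|I_2\|_0\le C|\log(H)|H^5\le C|\log(h)|H^5$. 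The term $I_1$ is the delicate one: using the estimate established in the proof of Lemma~\ref{le:aux2}, namely $\|A_h^{-1}\Pi_h(u_H\cdot\nabla e_h)(s)\|_0\le C\|A_h^{-1/2}e_h(s)\|_0+CH^2\|e_h(s)\|_1$ (see~(\ref{jul11})), together with the identity $s^{1/2}\|A_h^{-1/2}e_h(s)\|_0=\|y_h(s)\|_0$, one obtains $\|I_1\|_0\le C\int_0^t (t-s)^{-1/2}\|y_h(s)\|_0\,ds + CH^2\int_0^t (t-s)^{-1/2}s^{1/2}\|e_h(s)\|_1\,ds$. For the last integral I would use the sharp $H^1$-estimate of Theorem~\ref{teo2}, which gives $s^{1/2}\|e_h(s)\|_1\le Cs^{-1/2}|\log(h)|(|\log(H)|H^4+T^{1/2}h^3)$; integrating against $(t-s)^{-1/2}$ and multiplying by $H^2$ produces $C|\log(h)|(|\log(H)|H^6+H^2h^3)$, which is $\le C|\log(h)|H^5$ because $\sup_{0<H<1}|\log(H)|H<\infty$ and $h<H$.

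Collecting the three estimates yields the singular Gronwall inequality
\[
\|y_h(t)\|_0\ \le\ C\int_0^t \frac{\|y_h(s)\|_0}{\sqrt{t-s}}\,ds\ +\ C|\log(h)|H^5,\qquad t\in(0,T],
\]
so that the generalized Gronwall lemma~\cite[pp.~188--189]{Henry} gives $\max_{0\le t\le T}\|y_h(t)\|_0\le C|\log(h)|H^5$, i.e. $\|A_h^{-1/2}(v_h(t)-\tilde u_h(t))\|_0=t^{-1/2}\|y_h(t)\|_0\le Ct^{-1/2}|\log(h)|H^5$, as claimed.

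The main obstacle is precisely the convective remainder in $I_1$: the naive bound $CH^2\|e_h\|_1\le CH^4$ coming from Lemma~\ref{le:cota_pre_e1} is one power of $H$ short, so one is forced to feed in the $t$-weighted $H^1$-bound of Theorem~\ref{teo2} (and, for $I_3$, the $A_h^{-1}$-bound of Lemma~\ref{le:Cons-4.2}), which is exactly where the hypothesis $r\ge4$ enters. A secondary point requiring care is the bookkeeping of the logarithmic factors, so that only a single power $|\log(h)|$ survives in the end; this works out because the factor $|\log(H)|H^2$ accompanying the gained power of $H$ in Theorem~\ref{teo2}'s bound is harmless.
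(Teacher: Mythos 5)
Your argument is correct and follows essentially the same skeleton as the paper's proof: the weighted variable $y_h=t^{1/2}A_h^{-1/2}e_h$, Duhamel's formula for (\ref{e_h}), the splitting of the convective term via the estimate (\ref{jul11}) from the proof of Lemma~\ref{le:aux2}, the generalized Gronwall lemma, and Lemmas~\ref{le:aux1} and~\ref{le:Cons-4.2} for the $\rho_{h,H}$ and weight-derivative contributions. The one genuine difference is the treatment of the remainder $CH^2\int_0^t(t-s)^{-1/2}s^{1/2}\|e_h(s)\|_1\,ds$: the paper pulls out $\max_t t^{1/2}\|e_h\|_1$ and bounds it by $C|\log(h)|H^3$ through a fresh weighted estimate for $t^{1/2}A_h^{1/2}e_h$ (arguing as in Theorem~\ref{teo1} and using (\ref{eq:nch11}), (\ref{segun}) and the case $l=0$ of Lemma~\ref{le:cota_pre_e1}), whereas you keep $\|e_h\|_1$ inside the convolution and invoke the already-proved Theorem~\ref{teo2}; this costs an extra factor coming from $|\log(H)|H^6$ and $H^2h^3$, which you correctly absorb using $\sup_{0<H<1}H|\log(H)|<\infty$ and $h<H$. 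Both routes are valid and non-circular (Theorem~\ref{teo2} precedes this lemma and does not depend on it); yours is slightly shorter because it reuses Theorem~\ref{teo2}, while the paper's auxiliary bound $t^{1/2}\|e_h\|_1\le C|\log(h)|H^3$ avoids the extra logarithmic factor at the intermediate stage, the final bound being identical.
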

\unskip

\begin{proof}
Setting $y_h(t)= t^{1/2}A_h^{-1/2} e_h(t)$ and
arguing exactly as in the proof of Lemma~\ref{le:aux2}, instead
of~(\ref{dellema9}) we now obtain
\begin{align*}
\left\| y_h(t)\right\|_0 \le& \int_0^t \frac{C}{\sqrt{t-s}} \|y_h\|_0\,ds  +\biggl\| \int_0^t e^{-(t-s)A_h}
s^{1/2}A_h^{-1/2}\Pi_h\rho_{h,H}(s)\,ds \biggr\|_0\nonumber\\
&{}+\biggl\|\int_0^t e^{-(t-s)A_h} A_h^{-1/2}\frac{e_h(s)}{2 s^{1/2}}~ds\biggr\|_0 +C H^2\max_{0\le s\le t}s^{1/2}\|e_h(s)\|_1,
\end{align*}
so that a generalized Gronwall lemma \cite[pp.~188--189]{Henry} allow us to
write
\begin{align*}
\max_{0\le t\le T}\left\| y_h(t)\right\|_0 \le &C\max_{0\le t\le T}
\biggl\|\int_0^t e^{-(t-s)A_h} s^{1/2}A_h^{-1/2}\Pi_h\rho_{h,H}(s)\,ds \biggr\|_0\nonumber\\
{}+\max_{0\le t\le T}&\biggl\|\int_0^t e^{-(t-s)A_h} A_h^{-1/2}\frac{e_h(s)}{2 s^{1/2}}~ds\biggr\|_0+
C H^2\!\!\max_{0\le t\le T}t^{1/2}\|e_h(t)\|_1.
\end{align*}
Using (\ref{labeta}) we obtain
\begin{eqnarray*}
\max_{0\le t\le T}\left\| y_h(t)\right\|_0&\le& C B\left(\frac{1}{2},\frac{1}{2}\right)
\left(\max_{0\le s\le T}\|s A_h^{-1}\Pi_h \rho_{h,H}\|_0+\max_{0\le s\le T}\|A_h^{-1}e_h\|_0\right)\nonumber\\
&&+
C H^2\max_{0\le t\le T}t^{1/2}\|e_h\|_1.
\end{eqnarray*}
For the first two terms on the right-hand-side above we apply Lemmas~\ref{le:aux1} and~\ref{le:Cons-4.2} respectively.
For the last term we observe that denoting by $y_h(t)=t^{1/2}A_h^{1/2}e_h$ and
arguing as in Theorem~\ref{teo1} we get
\begin{align*}
\max_{0\le t\le T}\left\| y_h(t)\right\|_0\le &C\Bigl( |\log(h)|
\max_{0\le t \le T}\|t^{1/2}A_h^{-1/2}\Pi_h \rho_{h,H}\|_0
\\
&{}+ B\left(\frac{1}{2},\frac{1}{2}\right)\max_{0\le t\le T}\|e_h\|_0\Bigr),
\end{align*}
so that applying now~(\ref{eq:nch11}) and~(\ref{segun}) to bound the first term on
the right-hand side above, and the case $l=0$ in Lemma~\ref{le:cota_pre_e1} for
the second one, the proof is completed.
\end{proof}
\begin{Theorem}
\label{teo4}
Let $(u,p)$ be the solution
of {\rm (\ref{onetwo})--(\ref{ic})}. There exists a positive
constant $C$ such that the discrete velocity~$v_h$
defined by $(\ref{atenopv})$ and the approximation to $u$ over the finer grid,
$\tilde u_{h}$, satisfy the following bound for $r\ge 4$:
\begin{equation}\label{supercub_tg_1}
 \qquad \|v_{h}(t)-\tilde u_{h}(t)\|_{1} \le
 \frac{C}{t^{3/2}}|\log(h)|\left(|\log(h)|
 H^{5}+T^{1/2}h^4\right),\quad t\in(0,T].
\end{equation}
\end{Theorem}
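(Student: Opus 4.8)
The plan is to follow exactly the pattern used in Theorems~\ref{teo1}--\ref{teo3}, but now weighting the error by $t^{3/2}$ and measuring it in the $A_h^{1/2}$ (i.e.\ $H^1$) norm, so as to absorb the initial-time singularities coming from~(\ref{eq:u-inf}). First I would set $y_h(t)=t^{3/2}A_h^{1/2}e_h(t)$, where $e_h=v_h-\tilde u_h$, and differentiate using~(\ref{e_h}). Since $\dot e_h+A_he_h+\Pi_h(u_H\cdot\nabla e_h)=\Pi_h\rho_{h,H}$, an elementary computation gives
$$
\dot y_h+A_hy_h+t^{3/2}\Pi_h(u_H\cdot\nabla e_h)=t^{3/2}\Pi_h\rho_{h,H}+\tfrac{3}{2}t^{1/2}A_h^{1/2}e_h,
$$
and hence, by the variation-of-constants formula,
$$
y_h(t)=\int_0^te^{-(t-s)A_h}s^{3/2}A_h^{1/2}\Pi_h(u_H\cdot\nabla e_h)\,ds
+\int_0^te^{-(t-s)A_h}\Bigl(s^{3/2}A_h^{1/2}\Pi_h\rho_{h,H}+\tfrac{3}{2}s^{1/2}A_h^{1/2}e_h\Bigr)\,ds.
$$
The convective term is handled as in Theorem~\ref{teo1}: by~(\ref{eq:lips1}) (applied to $u_H$ and $v_h$, both satisfying the threshold condition, cf.\ Remark~\ref{re:1}) we have $\|A_h^{-1/2}\Pi_h(u_H\cdot\nabla e_h)\|_0\le C\|e_h\|_0\le C\|A_h^{1/2}e_h\|_0$ up to an inverse inequality, so~(\ref{lee1}) turns the first integral into $C\int_0^t(t-s)^{-1/2}\|y_h(s)\|_0\,ds$, which a generalized Gronwall lemma \cite[pp.~188--189]{Henry} removes.

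This leaves
$$
\max_{0\le t\le T}\|y_h(t)\|_0\le C\max_{0\le t\le T}\Bigl\|\int_0^te^{-(t-s)A_h}s^{3/2}A_h^{1/2}\Pi_h\rho_{h,H}(s)\,ds\Bigr\|_0
+C\max_{0\le t\le T}\Bigl\|\int_0^te^{-(t-s)A_h}s^{1/2}A_h^{1/2}e_h(s)\,ds\Bigr\|_0.
$$
For the second term I would write $A_h^{1/2}=A_h\cdot A_h^{-1/2}$ and invoke Lemma~\ref{l4b} (the $|\log(h)|$ maximal-regularity estimate) to bound it by $C|\log(h)|\max_{0\le s\le T}s^{1/2}\|A_h^{-1/2}e_h(s)\|_0$, which is exactly what Lemma~\ref{le:ultipri} estimates: it is $\le C|\log(h)|\cdot\frac{C}{s^{1/2}}|\log(h)|H^5\cdot s^{1/2}=C|\log(h)|^2H^5$. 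For the first term I would again factor $A_h^{1/2}=A_h\cdot A_h^{-1/2}$ and use Lemma~\ref{l4b} to get $C|\log(h)|\max_{0\le s\le T}s^{3/2}\|A_h^{-1/2}\Pi_h\rho_{h,H}(s)\|_0$, then bound $\|A_h^{-1/2}\Pi_h\rho_{h,H}\|_0\le C\|\rho_{h,H}\|_{-1}$ via~(\ref{eq:nch11}) and apply~(\ref{primer}) from Lemma~\ref{le:aux1b}: $s^{3/2}\|\rho_{h,H}\|_{-1}\le s^{3/2}\bigl(\frac{C}{s^{(r-1)/2}}|\log(H)|H^{r+1}+\frac{C}{s^{(r-2)/2}}h^r\bigr)$, which for $r\ge4$ is dominated (uniformly in $s\in(0,T]$) by $C|\log(H)|H^5+CT^{1/2}h^4$. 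Collecting the two contributions gives $\max_{0\le t\le T}\|y_h(t)\|_0\le C|\log(h)|\bigl(|\log(h)|H^5+T^{1/2}h^4\bigr)$ (absorbing $|\log(H)|$ into $|\log(h)|$ since $h<H$), and dividing by $t^{3/2}$ yields~(\ref{supercub_tg_1}).

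The main obstacle, as in the earlier theorems, is the second integral — the lower-order "self-interaction" term $\tfrac{3}{2}s^{1/2}A_h^{1/2}e_h$ arising from the time weight: one must not estimate it crudely (which would lose a power of $H$) but instead route it through the $A_h^{-1/2}$-norm bound of $e_h$ supplied by Lemma~\ref{le:ultipri}, paying only the harmless $|\log(h)|$ factor from Lemma~\ref{l4b}. The bookkeeping of the powers of $s$ near $s=0$ (checking that $s^{3/2}$ exactly compensates the $s^{-(r-1)/2}$ and $s^{-(r-2)/2}$ singularities in~(\ref{primer}) when $r\ge4$, and that the $A_h^{1/2}e_h$ term is integrable) is routine but is where the hypothesis $r\ge4$ and the precise exponent $3/2$ in the weight are actually used.
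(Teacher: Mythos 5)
Your proposal is correct and follows essentially the same route as the paper: the weight $y_h=t^{3/2}A_h^{1/2}e_h$, the generalized Gronwall step, Lemma~\ref{l4b} applied to both remaining integrals, then (\ref{eq:nch11}) together with (\ref{primer}) for the $\rho_{h,H}$ term and Lemma~\ref{le:ultipri} for the $e_h$ term. One small correction: for the convective integral the bound you cite, $\|A_h^{-1/2}\Pi_h(u_H\cdot\nabla e_h)\|_0\le C\|e_h\|_0$ from (\ref{eq:lips1}), would force you to peel $A_he^{-(t-s)A_h}$ off the semigroup and gives the non-integrable kernel $(t-s)^{-1}$; at this $H^1$ level you should instead invoke (\ref{eq:lips0}) (or (\ref{eq:lips0b})), i.e. $\|\Pi_h(u_H\cdot\nabla e_h)\|_0\le C\|e_h\|_1\le C\|A_h^{1/2}e_h\|_0$, so that only $A_h^{1/2}e^{-(t-s)A_h}$ is needed and the claimed $C\int_0^t(t-s)^{-1/2}\|y_h(s)\|_0\,ds$ bound follows.
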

\unskip
\begin{proof}
Let $y_h(t)=t^{3/2}A_h^{1/2}e_h$ and argue
as in the proof of Theorem~\ref{teo2} to get
$$
\|y_h(t)\|_0\le C|\log(h)|\left(\max_{0\le t\le T} \left\|t^{3/2} A_h^{-1/2}\Pi_h \rho_{h,H}\right\|_0
+\max_{0\le t \le T}\left\|t^{1/2}A_h^{-1/2} e_h\right\|_0\right),
$$
for $t\in(0,T]$.
To bound the first term on the right-hand side above we apply~(\ref{eq:nch11}) and~(\ref{primer}), and for the second one we apply Lemma~\ref{le:ultipri}.
\end{proof}
We now summarize the main results of the section in the following theorem.
\begin{Theorem}
\label{teotodo}
Let $(u,p)$ be the solution
of {\rm (\ref{onetwo})--(\ref{ic})}. There exists a positive
constant $C$ such that the approximation to $u$ over the finer grid, $\tilde u_{h}$, satisfy the following bounds for $r= 3,4$ and $t\in(0,T]$:
\begin{align*}
\|u(t)-\tilde u_h(t)\|_0\le& \frac{C}{t^{(r-2)/2}}|\log(h)||\log(H)|H^{r+1}+\frac{C}{t^{(r-2)/2}}h^r.
\\
\|u(t)-\tilde u_h(t)\|_1\le&\frac{C}{t^{(r-1)/2}}|\log(h)|\left(|\log(h)|H^{r+1}+T^{1/2}h^r\right)+\frac{C}{t^{(r-2)/2}}h^{r-1},
\end{align*}
where in the last inequality we can replace the second $|\log(h)|$ by $|\log(H)|$ in the case $r=3$.
\end{Theorem}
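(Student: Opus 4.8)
The plan is to combine the two-grid estimates already established in Theorems~\ref{teo1}--\ref{teo4} with the error bounds for the auxiliary approximation $v_h$ coming from the standard MFE analysis of \cite{jbj_regularity}, using the triangle inequality $\|u-\tilde u_h\|_j\le \|u-v_h\|_j+\|v_h-\tilde u_h\|_j$ for $j=0,1$. The term $\|v_h-\tilde u_h\|_j$ is exactly what Theorems~\ref{teo1}--\ref{teo4} control: for $r=3$ one uses Theorem~\ref{teo1} ($j=0$) and Theorem~\ref{teo2} ($j=1$), which give $Ct^{-1/2}|\log(h)||\log(H)|H^4$ and $Ct^{-1}|\log(h)|(|\log(H)|H^4+T^{1/2}h^3)$ respectively; for $r=4$ one uses Theorem~\ref{teo3} ($j=0$) and Theorem~\ref{teo4} ($j=1$), which give $Ct^{-1}|\log(h)||\log(H)|H^5$ and $Ct^{-3/2}|\log(h)|(|\log(h)|H^5+T^{1/2}h^4)$. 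In every case these match the claimed $H$-dependent parts $t^{-(r-2)/2}|\log(h)||\log(H)|H^{r+1}$ and $t^{-(r-1)/2}|\log(h)|(|\log(h)|H^{r+1}+T^{1/2}h^r)$, with the observation that for $r=3$ the second $|\log(h)|$ in the $H^{r+1}$ term is in fact $|\log(H)|$ (this is the parenthetical remark at the end of the statement).

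Next I would supply the bound for $\|u-v_h\|_j$. Since $v_h$ is the standard MFE (Galerkin) approximation to $(u,p)$ in the fine mesh of size $h$ — indeed $v_h$ is governed by \eqref{atenopv}, which is the usual discretization with the true convective term — the decomposition $u-v_h=(I-\Pi_h)u+z_h$, together with the projection estimate \eqref{eq:error-Pi_h} and the bound \eqref{eq:cota-z-1} from Lemma~\ref{le:estz_h}, yields $\|u-v_h\|_j\le Ct^{-(r-2)/2}h^{r-j}$ for $j=0,1$. This contributes the $Ct^{-(r-2)/2}h^r$ term in the $L^2$ estimate and the $Ct^{-(r-2)/2}h^{r-1}$ term in the $H^1$ estimate; these are the non-logarithmic, purely $h$-dependent terms in the final bounds, and they dominate the $T^{1/2}h^r$ term in the $H^1$ estimate (which comes from the two-grid part and has a worse time weight but an extra power of $h$, so it is kept separately as stated).

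Adding the two contributions and absorbing constants, the stated bounds follow directly; the $T$-dependence is carried along only in the $h^r$ term of the $H^1$ estimate because that is where it appears in Theorems~\ref{teo2} and~\ref{teo4}. I do not expect any genuine obstacle here: all the analytical work has been done in the preceding lemmas and theorems, and this final theorem is purely a bookkeeping step assembling those pieces via the triangle inequality and matching the time-singularity exponents $t^{-(r-2)/2}$ and $t^{-(r-1)/2}$ against the exponents appearing in Theorems~\ref{teo1}--\ref{teo4}. The only point requiring a moment's care is checking that, for $r=3$, the weight $t^{-1/2}$ of Theorem~\ref{teo1} agrees with $t^{-(r-2)/2}=t^{-1/2}$ and that $t^{-1}$ of Theorem~\ref{teo2} agrees with $t^{-(r-1)/2}=t^{-1}$, and likewise for $r=4$ that $t^{-1}$ and $t^{-3/2}$ match $t^{-(r-2)/2}$ and $t^{-(r-1)/2}$; these are immediate.
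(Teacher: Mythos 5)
Your proposal is correct and follows exactly the paper's own argument: the decomposition $u-\tilde u_h=(u-v_h)+(v_h-\tilde u_h)$, with the first term bounded through (\ref{eq:error-Pi_h}) and (\ref{eq:cota-z-1}) and the second through Theorems~\ref{teo1}--\ref{teo4}. The bookkeeping of the time weights and logarithmic factors, including the $r=3$ replacement of $|\log(h)|$ by $|\log(H)|$, matches the paper's proof.
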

\unskip
\begin{proof}
We use the decomposition $u-\tilde u_h=(u-v_h)+(v_h-\tilde u_h)$. To bound the first term we apply (\ref{eq:error-Pi_h}) and (\ref{eq:cota-z-1})
while for the second we apply Theorems~\ref{teo1}-\ref{teo4}.
\end{proof}

Now, we get the error bounds for the pressure. We begin with some error estimates for the time derivative of $v_h-\tilde u_h$.
\begin{lema}
\label{le:devt}
Let $(u,p)$ be the solution of {\rm (\ref{onetwo})--(\ref{ic})}. Then there
exists a positive constant $C$ such that
the discrete velocity $v_h$ defined by~$(\ref{atenopv})$
and the approximation to $u$ over the finer grid, $\tilde u_{h}$ satisfy
the following bound for $r=3,4$:
\begin{eqnarray}\label{cotae_t}
\|\dot v_h(t)-\dot {\tilde u}_h(t)\|_{-1}\le \frac{C}{t^{(r-1)/2}}|\log(h)|\left(|\log(h)|H^{r+1}+h^r\right),\ t\in(0,T].
\end{eqnarray}
In the case $r=3$ the second $\log(h)$ can be replaced by $\log(H)$.
\end{lema}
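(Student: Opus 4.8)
The plan is to read off $\dot e_h$ from equation (\ref{e_h}) for $e_h=v_h-\tilde u_h$, to bound the three resulting terms by quantities already estimated in Theorems~\ref{teo1}--\ref{teo4} and Lemmas~\ref{le:aux1b}, \ref{le:lips01}, and to reduce the genuine $H^{-1}(\Omega)^d$ norm to the discrete one $\|A_h^{-1/2}\cdot\|_0$. For the reduction, note that $\dot e_h(t)\in V_{h,r}$ for $t\in(0,T]$ (both $v_h$ and $\tilde u_h$ solve finite-dimensional linear ODEs in $V_{h,r}$ via (\ref{atenop})--(\ref{atenopv})), and that $\Pi_h$ is the $L^2$-orthogonal projection onto $V_{h,r}$, so $(\dot e_h,\phi)=(\dot e_h,\Pi_h\phi)$ for every $\phi\in H^1_0(\Omega)^d$. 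Applying Cauchy--Schwarz with $A_h^{\pm1/2}$, using $\|A_h^{1/2}\Pi_h\phi\|_0=\|\nabla\Pi_h\phi\|_0$ together with the $H^1$-stability of $\Pi_h$ contained in (\ref{eq:error-Pi_h}) (take $l=1$, $j=1$), one gets $\|\dot e_h\|_{-1}\le C\|A_h^{-1/2}\dot e_h\|_0$. I would record this as the first step.

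From (\ref{e_h}) one then has $A_h^{-1/2}\dot e_h=-A_h^{1/2}e_h-A_h^{-1/2}\Pi_h(u_H\cdot\nabla e_h)+A_h^{-1/2}\Pi_h\rho_{h,H}$, and it suffices to bound the three terms. The first term is $\|A_h^{1/2}e_h\|_0=\|e_h\|_1$, controlled by Theorem~\ref{teo2} when $r=3$ and by Theorem~\ref{teo4} when $r=4$; in both cases the bound coincides with the claimed right-hand side up to the harmless factor $T^{1/2}$, and for $r=3$ Theorem~\ref{teo2} in fact yields $|\log(H)|$ in place of the second $|\log(h)|$, which explains the closing remark of the lemma. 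The second term is handled by (\ref{eq:lips1}) of Lemma~\ref{le:lips01} applied to $u_H$ (which satisfies the threshold condition) and to $w_h=e_h$ (which satisfies the threshold-type bound since both $v_h$ and $\tilde u_h$ do, cf.\ Remark~\ref{re:1}), exactly as in (\ref{eq:comolip}): $\|A_h^{-1/2}\Pi_h(u_H\cdot\nabla e_h)\|_0\le K\|e_h\|_0$, and the $L^2$ error $\|e_h\|_0$ is bounded by Theorem~\ref{teo1} for $r=3$ and by Theorem~\ref{teo3} for $r=4$; in both cases $t^{1/2}\le T^{1/2}$ turns the available $t^{-1/2}$ (resp.\ $t^{-1}$) factor into the required $t^{-(r-1)/2}$, so this term is absorbed. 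The third term is bounded by (\ref{eq:nch11}), giving $\|A_h^{-1/2}\Pi_h\rho_{h,H}\|_0\le C\|\rho_{h,H}\|_{-1}$, and then by (\ref{primer}) of Lemma~\ref{le:aux1b}, whose right-hand side $\frac{C}{t^{(r-1)/2}}|\log(H)|H^{r+1}+\frac{C}{t^{(r-2)/2}}h^r$ is dominated by the claimed bound after using $|\log(H)|\le|\log(h)|$ and, once more, $t^{1/2}\le T^{1/2}$ to upgrade $t^{-(r-2)/2}$ to $t^{-(r-1)/2}$.

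Collecting the three estimates and taking the supremum over $t\in(0,T]$ yields (\ref{cotae_t}); the case $r=3$ with $\log(H)$ in place of the second $\log(h)$ follows because the only place where $|\log(h)|^2$ could appear is the first term, where Theorem~\ref{teo2} already provides $|\log(h)||\log(H)|$. The main (and really only) point requiring care is the first step, i.e.\ that for an element of $V_{h,r}$ the genuine $H^{-1}$ norm is controlled by its $A_h^{-1/2}$-norm; this rests on the $L^2$-orthogonality of $\Pi_h$ and its $H^1$-stability, and for the Hood--Taylor element, where $V_{h,r}\not\subset V$, one must note that the identities $(\dot e_h,\phi)=(\dot e_h,\Pi_h\phi)$ and $\|A_h^{1/2}\chi_h\|_0=\|\nabla\chi_h\|_0$ for $\chi_h\in V_{h,r}$ nonetheless still hold, which is enough. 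Everything else is bookkeeping of the powers of $t$, $h$, $H$ and of the logarithmic factors already supplied by Theorems~\ref{teo1}--\ref{teo4} and Lemmas~\ref{le:aux1b}, \ref{le:lips01}.
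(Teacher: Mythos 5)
Your proposal is correct and follows essentially the same route as the paper: reduce $\|\dot e_h\|_{-1}$ to $C\|A_h^{-1/2}\dot e_h\|_0$, read $\dot e_h$ off from (\ref{e_h}), bound the three terms by $\|e_h\|_1$, $C\|e_h\|_0$ via (\ref{eq:comolip}), and $\|A_h^{-1/2}\Pi_h\rho_{h,H}\|_0$ via (\ref{eq:nch11}) and (\ref{primer}), and then invoke Theorems~\ref{teo1}--\ref{teo4}. The only difference is that you spell out the justification of $\|\dot e_h\|_{-1}\le C\|A_h^{-1/2}\dot e_h\|_0$ and the bookkeeping of the powers of $t$ and the logarithms, which the paper leaves implicit.
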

\begin{proof}
Using (\ref{e_h}) and taking into account that $\|\dot e_h\|_{-1}\le C\|A_h^{-1/2}\dot e_h\|_0$ we obtain
\begin{eqnarray*}
\|\dot e_h\|_{-1}&\le& \|A_h^{1/2}e_h\|_0+\|A_h^{-1/2}\Pi_h\left((u_H\cdot\nabla)e_h+\rho_{h,H}\right)\|_0
\nonumber\\
&\le&\|e_h\|_1+C \|e_h\|_0+\|A_h^{-1/2}\Pi_h\rho_{h,H}\|_0,
\end{eqnarray*}
after using (\ref{eq:comolip}) in the last inequality.
Applying now Theorems~\ref{teo1}-\ref{teo4} together with~(\ref{eq:nch11})
and~(\ref{primer}) we reach (\ref{cotae_t}).
\end{proof}

The following Lemma is proved in \cite[Corollary 4.19]{jbj_regularity} and
Proposition~3.1 in~\cite{heyran0} and \cite{heyran2} (see also \cite{bjj} and
\cite{jbj_fully}).
\begin{lema}\label{lag_h}
Let $(u,p)$ be the solution of {\rm (\ref{onetwo})--(\ref{ic})} and let  $(u_h,p_h)$
and~$(v_h,g_h)$
the approximations defined
in (\ref{ten})-(\ref{ten2}) and (\ref{tenv})--(\ref{ten2v}), respectively.
Then, the following bound holds for $r=2,3,4$
\begin{align}\label{eq_lap_h}
\|p_h(t)-p(t)\|_{L^2/{\Bbb R}}\le \frac{C}{t^{(r-2)/2}}h^{r-1},\quad t\in(0,T],
\\
\label{eq_lag_h}
\|g_h(t)-p(t)\|_{L^2/{\Bbb R}}\le \frac{C}{t^{(r-2)/2}}h^{r-1},\quad t\in(0,T].
\end{align}
\end{lema}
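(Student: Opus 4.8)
The plan is to derive both bounds from the corresponding velocity error estimates by exploiting the discrete inf-sup condition~(\ref{lbbh}), which is the standard route to pressure estimates once the velocity error (and the time derivative of the velocity error) is under control. Since the statement attributes~(\ref{eq_lap_h}) to~\cite[Corollary 4.19]{jbj_regularity} and to Proposition~3.1 in~\cite{heyran0},~\cite{heyran2}, the bound for $p_h$ is simply quoted; the work is in~(\ref{eq_lag_h}), which concerns the auxiliary pressure $g_h$ attached to the auxiliary velocity $v_h$ defined by~(\ref{tenv})--(\ref{ten2v}). First I would write the equation satisfied by the pressure error $g_h - q_h$, where $q_h$ is a suitable fully discrete (or Fortin-type) interpolant of $p$ in $Q_{h,r-1}$: subtracting the weak form~(\ref{tenv}) from the weak form of the momentum equation~(\ref{onetwo}) tested against $\phi_h\in X_{h,r}$, one gets, for all $\phi_h$,
\begin{equation*}
(\nabla(g_h - q_h),\phi_h) = (\dot v_h - \dot u,\phi_h) + (\nabla(v_h - u),\nabla\phi_h) + b(u,u,\phi_h) - b(u,u,\phi_h) + (\nabla(p - q_h),\phi_h),
\end{equation*}
i.e. the nonlinear terms cancel exactly because $v_h$ was built with the \emph{exact} convective term $b(u,u,\cdot)$; this is precisely why the auxiliary approximation is introduced.

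Next I would estimate each term on the right. The term $(\nabla(v_h-u),\nabla\phi_h)$ is bounded by $\|v_h - u\|_1\|\phi_h\|_1$, and $\|v_h-u\|_1 \le \|(I-\Pi_h)u\|_1 + \|z_h\|_1 \le C t^{-(r-2)/2} h^{r-1}$ by~(\ref{eq:error-Pi_h}) and the case $j=1$ of~(\ref{eq:cota-z-1}) in Lemma~\ref{le:estz_h}. The term $(\dot v_h - \dot u,\phi_h)$ is handled by writing $\dot v_h - \dot u = \dot v_h - \frac{d}{dt}\Pi_h u + \frac{d}{dt}(\Pi_h u - u) = -\dot z_h - (I-\Pi_h)\dot u$ and using $\|\dot z_h\|_{-1}\le C\|A_h^{-1/2}\dot z_h\|_0$ together with the $z_h$-type estimates from~\cite{jbj_regularity} (the standard bound $\|A_h^{-1/2}\dot z_h\|_0 \le C t^{-(r-2)/2} h^{r-1}$), plus $\|(I-\Pi_h)\dot u\|_{-1} \le C h^{r-1}\|\dot u\|_{r-2} \le C t^{-(r-2)/2}h^{r-1}$ using~(\ref{eq:u-inf}). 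The interpolation-error term $(\nabla(p-q_h),\phi_h)$ contributes $C\|p - q_h\|_{L^2/\mathbb{R}}\|\phi_h\|_1 \le C t^{-(r-2)/2}h^{r-1}\|\phi_h\|_1$ by standard approximation in $Q_{h,r-1}$ and~(\ref{eq:u-inf}). Then the inf-sup condition~(\ref{lbbh}) gives
\begin{equation*}
\beta\|g_h - q_h\|_{L^2/\mathbb{R}} \le \sup_{\phi_h\in X_{h,r}} \frac{(\nabla(g_h-q_h),\phi_h)}{\|\phi_h\|_1} \le \frac{C}{t^{(r-2)/2}}h^{r-1},
\end{equation*}
and a triangle inequality with the interpolation estimate for $p - q_h$ finishes~(\ref{eq_lag_h}).

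The main obstacle I anticipate is controlling $\|\dot v_h - \dot u\|_{-1}$ with the sharp temporal weight $t^{-(r-2)/2}$: this requires the $H^{-1}$-type bound on $\dot z_h = \frac{d}{dt}(\Pi_h u - v_h)$, which is not one of the estimates reproduced verbatim in the excerpt (only the bound on $z_h$ itself, Lemma~\ref{le:estz_h}, is given) and must be taken from the underlying analysis in~\cite{jbj_regularity}; one has to be careful that this estimate holds with exactly the stated power of $t$ and no logarithmic loss, since any extra $|\log h|$ here would propagate into the pressure bound. All other ingredients are routine. Given that the lemma explicitly cites~\cite[Corollary 4.19]{jbj_regularity} and Proposition~3.1 of~\cite{heyran0},~\cite{heyran2} for both inequalities, the cleanest write-up is simply to invoke those references for~(\ref{eq_lap_h}) and~(\ref{eq_lag_h}) directly, noting that the passage from the velocity and velocity-time-derivative estimates to the pressure is the standard inf-sup argument sketched above, performed there in detail.
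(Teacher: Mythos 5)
Your proposal matches the paper, which offers no internal proof and simply cites \cite[Corollary 4.19]{jbj_regularity} and Proposition~3.1 of \cite{heyran0}, \cite{heyran2} — exactly the write-up you recommend in your closing paragraph. Your sketch of the underlying inf-sup argument for $g_h$ (with the nonlinear terms cancelling and the $\|A_h^{-1/2}\dot z_h\|_0$ estimate imported from \cite{jbj_regularity}) is a faithful outline of what those references do, so there is nothing to correct.
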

\begin{Theorem}
\label{teo5}
Let $(u,p)$ be the solution
of {\rm (\ref{onetwo})--(\ref{ic})}. There exists a positive
constant $C$ such that the approximation to $p$ over the finer grid, $\tilde p_{h}$, satisfies the following bound
for $t\in(0,T]$ and $r=3,4$:
\begin{eqnarray*}
\|\tilde p_h(t)-p(t)\|_{L^2/{\Bbb R}}\le \frac{C}{t^{(r-2)/2}}h^{r-1}+\frac{C}{t^{(r-1)/2}}|\log(h)|\left(|\log(h)|H^{r+1}+h^r\right).
\end{eqnarray*}
In the case $r=3$ the second $\log(h)$ can be replaced by $\log(H)$.
\end{Theorem}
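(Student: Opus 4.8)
The plan is to bound $\tilde p_h-p$ by splitting it, as usual for such pressure estimates, as
\[
\tilde p_h-p=(\tilde p_h-g_h)+(g_h-p),
\]
where $(v_h,g_h)$ is the auxiliary approximation defined in (\ref{tenv})--(\ref{ten2v}). The second summand is already controlled by (\ref{eq_lag_h}) in Lemma~\ref{lag_h}, giving the $Ct^{-(r-2)/2}h^{r-1}$ contribution. So the real work is to estimate $q_h:=\tilde p_h-g_h\in Q_{h,r-1}$.

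The key step is the inf-sup condition (\ref{lbbh}): it suffices to bound, for every $\phi_h\in X_{h,r}$,
\[
(\nabla q_h,\phi_h)=(q_h,\nabla\cdot\phi_h)
\]
by $C\|q_h\|_{L^2/{\mathbb R}}$-dominating quantities times $\|\phi_h\|_1$. Subtracting the weak form (\ref{ten_p1})--(\ref{ten2_p1}) of $(\tilde u_h,\tilde p_h)$ from that of $(v_h,g_h)$ in (\ref{tenv})--(\ref{ten2v}), and recalling $e_h=v_h-\tilde u_h$ and the residual decomposition behind (\ref{e_h}), one gets an identity of the form
\[
(\nabla q_h,\phi_h)=-(\dot e_h,\phi_h)-(\nabla e_h,\nabla\phi_h)-(u_H\cdot\nabla e_h,\phi_h)+(\rho_{h,H},\phi_h),
\]
valid for all $\phi_h\in X_{h,r}$ (the divergence-free constraint makes $\nabla g_h,\nabla\tilde p_h$ drop out against discretely divergence-free test functions, while against general $\phi_h\in X_{h,r}$ only the difference $\nabla q_h$ survives on the left). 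Hence
\[
\|q_h\|_{L^2/{\mathbb R}}\le \frac1\beta\Bigl(\|\dot e_h\|_{-1}+\|e_h\|_1+\|u_H\cdot\nabla e_h\|_{-1}+\|\rho_{h,H}\|_{-1}\Bigr).
\]
Now each term on the right is already estimated: $\|\dot e_h\|_{-1}$ by Lemma~\ref{le:devt}, $\|e_h\|_1$ by Theorems~\ref{teo2} and~\ref{teo4} (i.e. (\ref{supercuad_tg_1}) and (\ref{supercub_tg_1})), $\|\rho_{h,H}\|_{-1}$ by (\ref{primer}) in Lemma~\ref{le:aux1b}, and the convective term $\|u_H\cdot\nabla e_h\|_{-1}$ is controlled via (\ref{eq:lips1b}) from Lemma~\ref{le:lips01} (using Remark~\ref{re:1}) by $C\|e_h\|_0$, which is bounded by Theorem~\ref{teo1}. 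Collecting the dominant contributions — the $\|e_h\|_1$ bound $\frac{C}{t}|\log(h)|(|\log(H)|H^4+T^{1/2}h^3)$ for $r=3$, and the corresponding $r=4$ bound, together with the $h^{r-1}$ term from $g_h-p$ and the $\dot e_h$ term from Lemma~\ref{le:devt} — yields exactly the claimed estimate, with the remark about replacing the second $|\log(h)|$ by $|\log(H)|$ when $r=3$ coming directly from the corresponding remarks in Theorem~\ref{teo2} and Lemma~\ref{le:devt}.

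The main obstacle is getting the error equation for $\nabla q_h$ exactly right: one must be careful that the pressure terms are tested against \emph{all} of $X_{h,r}$ (not just $V_{h,r}$), so that $\nabla q_h$ appears rather than disappearing, and simultaneously that the time-derivative and viscous terms are handled in the $H^{-1}$/$H^1$ duality pairing so that the already-available bounds for $\dot e_h$, $e_h$, and $\rho_{h,H}$ apply without loss of powers of $t$ or $h$. Once that identity is in place, the rest is routine bookkeeping of the previously established estimates, choosing in each regime ($r=3$ vs.\ $r=4$) the sharpest of the available bounds for $\|e_h\|_1$ and $\|\dot e_h\|_{-1}$. The temporal weights combine to give the stated $t^{-(r-1)/2}$ and $t^{-(r-2)/2}$ factors.
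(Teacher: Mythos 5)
Your proposal is correct and follows essentially the same route as the paper: the decomposition $\tilde p_h-p=(\tilde p_h-g_h)+(g_h-p)$ with (\ref{eq_lag_h}) for the second piece, the inf-sup condition (\ref{lbbh}) applied to the error equation obtained by subtracting (\ref{tenv}) from (\ref{ten_p1}), and then the bounds $\|(u_H\cdot\nabla)e_h\|_{-1}\le C\|e_h\|_0$ from (\ref{eq:lips1b}), (\ref{primer}) for $\|\rho_{h,H}\|_{-1}$, Lemma~\ref{le:devt} for $\|\dot e_h\|_{-1}$, and Theorems~\ref{teo1}--\ref{teo4} for $e_h$. The only discrepancy is an immaterial overall sign in your error identity, which does not affect the norm estimates.
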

\begin{proof}
We use the decomposition
$$
\|\tilde p_h-p\|_{L^2/{\Bbb R}} \le \|\tilde p_h-g_h\|_{L^2/{\Bbb R}}+\| g_h-p\|_{L^2/{\Bbb R}}.
$$
To bound the second term on the right-hand-side above we apply (\ref{eq_lag_h}). For the first one subtracting
(\ref{tenv}) from (\ref{ten_p1}) and applying the inf-sup condition (\ref{lbbh}) we obtain
\begin{eqnarray*}
\beta \|\tilde p_h-g_h\|_{L^2/{\Bbb R}}\le \|\tilde u_h-v_h\|_1+\|(u_H\cdot\nabla)e_h\|_{-1}+\|\rho_{h,H}\|_{-1}
+\|\dot e_h\|_{-1}.
\end{eqnarray*}
We first observe that applying (\ref{eq:lips1b}) we get $\|(u_H\cdot\nabla)e_h\|_{-1}\le C \|e_h\|_0$.
To bound  $\|\rho_{h,H}\|_{-1}$ we apply~(\ref{primer}).
Now, the proof concludes applying  Theorems~\ref{teo1}--\ref{teo4}
together with (\ref{cotae_t}).
\end{proof}
We state in the following theorem the results that can be obtained for the mini-element.
\begin{Theorem}
\label{teomini}
Let $(u,p)$ be the solution
of {\rm (\ref{onetwo})--(\ref{ic})}. There exists a positive
constant $C$ such that the approximations over the finer grid computed
using the mini-element, $(\tilde u_h,\tilde p_{h})$, satisfy the following bounds
for $t\in(0,T]$:
\begin{eqnarray*}
\|\tilde u(t)-u(t)\|_0&\le& C H^2+C h^2,\nonumber\\
\|\tilde u(t)-u(t)\|_1&\le&C |\log(h)|H^2+C h,\nonumber\\
\|\tilde p_h(t)-p(t)\|_{L^2/{\Bbb R}}&\le& {C}|\log(h)|H^2+C h.
\end{eqnarray*}
\end{Theorem}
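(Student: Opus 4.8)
The strategy is to repeat the programme of Section~\ref{se:3} with $r=2$; the simplification is that for the mini-element only the single scale $H^{2}$ (up to a $|\log(h)|$ factor) has to be tracked, rather than the whole hierarchy $H^{r+1},\dots,H^{5}$ of the Hood--Taylor analysis, and that all estimates hold uniformly on $[0,T]$ because for $r=2$ the exponent $(r-2)/2$ vanishes and the solution is regular up to $t=0$. As in Theorem~\ref{teotodo} we split $u-\tilde u_h=(u-v_h)+e_h$, where $(v_h,g_h)$ is the auxiliary approximation (\ref{tenv})--(\ref{ten2v}) and $e_h=v_h-\tilde u_h$ solves (\ref{e_h}). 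The first summand is bounded exactly as before: by~(\ref{eq:error-Pi_h}) and the $r=2$ analogue of the $z_h$ estimate of Lemma~\ref{le:estz_h} (see \cite{jbj_regularity}) one has $\|u-v_h\|_j\le Ch^{2-j}$, $j=0,1$, which accounts for the $h^{2}$, $h$ and $h$ contributions in the three stated bounds.

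For the error $e_h$ the crucial ingredient is a bound on $\rho_{h,H}=u_H\cdot\nabla v_h-u\cdot\nabla u$ in the $H^{-1}$ norm. Using the decomposition (\ref{decomrho}), the inequality~(\ref{eq:lips1b}) of Lemma~\ref{le:lips01} (with the first argument replaced by $u$ or $u_H$ as allowed by Remark~\ref{re:1}), and the first-order Galerkin bound $\|u-u_H\|_0\le CH^2$ obtained from~(\ref{gal_cuad}) with $r=2$, one gets $\|\rho_{h,H}^1\|_{-1}\le C\|u-u_H\|_0\le CH^2$ and $\|\rho_{h,H}^2\|_{-1}\le C\|u-v_h\|_0\le Ch^2$, hence $\|\rho_{h,H}(t)\|_{-1}\le CH^2$ for all $t\in(0,T]$. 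Note that here no super-convergence in $L^2$ or in negative norms of $u-u_H$ is invoked, so Lemma~\ref{le:l3} plays no role. The threshold condition~(\ref{eq:threshold}) under which Lemma~\ref{le:lips01} is applied holds unconditionally for $u_H$ and $v_h$ by~(\ref{gal_cuad}) and the $z_h$ bound, and for $w_h=e_h$ (needed only in the linearized convective term) by the usual continuation argument as in~\cite[Remark 4.1]{jbj_regularity} together with the bounds on $e_h$ established below.

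The three estimates then follow along the lines of Theorems~\ref{teo1}, \ref{teo2} and~\ref{teo5}, but with no time weights. \emph{$L^2$ bound:} from the variation-of-constants formula applied to~(\ref{e_h}), the inequality~(\ref{eq:lips1}) (so that $\|A_h^{-1/2}\Pi_h(u_H\cdot\nabla e_h)\|_0\le C\|e_h\|_0$), the smoothing estimate~(\ref{lee1}) ($\|A_h^{1/2}e^{-(t-s)A_h}\|_0\le C(t-s)^{-1/2}$), and~(\ref{eq:nch11}) to replace $\|A_h^{-1/2}\Pi_h\rho_{h,H}\|_0$ by $\|\rho_{h,H}\|_{-1}\le CH^2$, one arrives at $\|e_h(t)\|_0\le\int_0^t C(t-s)^{-1/2}\|e_h(s)\|_0\,ds+CH^2$, whence a generalized Gronwall lemma \cite[pp.~188--189]{Henry} gives $\max_{[0,T]}\|e_h\|_0\le CH^2$. \emph{$H^1$ bound:} setting $y_h=A_h^{1/2}e_h$ and writing $e^{-(t-s)A_h}A_h^{1/2}g=A_h e^{-(t-s)A_h}A_h^{-1/2}g$, one applies Lemma~\ref{l4b} to the two resulting integrals $\int_0^t A_h e^{-(t-s)A_h}\bigl(A_h^{-1/2}\Pi_h(u_H\cdot\nabla e_h)\bigr)\,ds$ and $\int_0^t A_h e^{-(t-s)A_h}\bigl(A_h^{-1/2}\Pi_h\rho_{h,H}\bigr)\,ds$; since $\|e_h\|_0$ is already controlled no further Gronwall argument is needed and one obtains $\|e_h(t)\|_1\le C|\log(h)|H^2$. \emph{Pressure:} subtracting~(\ref{tenv}) from~(\ref{ten_p1}) and using the inf-sup condition~(\ref{lbbh}) bounds $\|\tilde p_h-g_h\|_{L^2/{\Bbb R}}$ by $C\bigl(\|e_h\|_1+\|(u_H\cdot\nabla)e_h\|_{-1}+\|\rho_{h,H}\|_{-1}+\|\dot e_h\|_{-1}\bigr)$; by~(\ref{eq:lips1b}) and the above the first three terms are $\le C|\log(h)|H^2$, and for the last one the mini-element analogue of Lemma~\ref{le:devt} ($\|\dot e_h\|_{-1}\le C\|A_h^{-1/2}\dot e_h\|_0\le C(\|e_h\|_1+\|e_h\|_0+\|\rho_{h,H}\|_{-1})$, from~(\ref{e_h}), (\ref{eq:lips1}) and~(\ref{eq:nch11})) gives the same order. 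Adding $\|g_h-p\|_{L^2/{\Bbb R}}\le Ch$ from Lemma~\ref{lag_h} with $r=2$ completes the pressure estimate.

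The only genuinely delicate point is the treatment of the linearized convective term $\Pi_h(u_H\cdot\nabla e_h)$ in the evolution equation for $e_h$: all the Lipschitz-type bounds of Lemma~\ref{le:lips01} are invoked with one argument living on the coarse mesh and the other, $e_h$, on the fine mesh, which is legitimate precisely because the first-order Galerkin error $\|u-u_H\|_j=O(H^{2-j})$ makes $u_H$ satisfy the threshold condition at the coarse scale. Everything else is a straightforward simplification of the arguments already carried out for $r=3,4$.
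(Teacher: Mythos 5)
Your proposal is correct and follows exactly the route the paper intends: the paper states Theorem~\ref{teomini} without proof, remarking only that it follows from a ``similar but simpler'' version of the Section~\ref{se:3} analysis, and your argument is precisely that simplification (splitting $u-\tilde u_h=(u-v_h)+e_h$, bounding $\|\rho_{h,H}\|_{-1}\le CH^2$ via Lemma~\ref{le:lips01} and the $r=2$ case of (\ref{gal_cuad}), a Gronwall argument for the $L^2$ bound, Lemma~\ref{l4b} for the $|\log(h)|H^2$ bound in $H^1$, and the inf-sup argument of Theorem~\ref{teo5} for the pressure). The absence of time weights for $r=2$ and the omission of the superconvergence machinery (Lemma~\ref{le:l3}, bound (\ref{supercuad})) are handled correctly, so no gap remains.
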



\section{Semi-discretization in space. The second two-grid algorithm.}
As in the previous section we will concentrate on the approximations obtained with the Hood-Taylor mixed finite element
and $r=3$ or $r=4$ and we will state at the end of the section the results that can be obtained for the mini-element method
with a much simpler analysis.

In the second algorithm we consider, the first level, as before, is given by the
standard mixed finite-element approximation to (\ref{onetwo})--(\ref{ic}), that is,
the solution of~(\ref{ten})--(\ref{ten2}) with initial condition
$u_H(0)=\Pi_H(u_{0})$.
In the second level we solve a linearized problem on a finer grid and given $\tilde u_h(0)=\Pi_hu_0$, we compute
$\tilde u_{h}(t)\in X_{h,r}$ and $\tilde p_{h}(t)\in Q_{h,r-1}$, $t\in(0,T]$, satisfying, for all $\phi_{h} \in X_{h,r}$
and $\psi_{h} \in Q_{h,r-1}$
\begin{eqnarray}
&&(\dot  {\tilde u}_{h}, \phi_{h})
+ ( \nabla \tilde u_{h}, \nabla \phi_{h}) + b(u_{H},\tilde u_h, \phi_{h}) +b(\tilde u_h,u_H,\phi_h)
+( \nabla \tilde p_{h}, \phi_{h})  = \nonumber\\
&&\qquad\qquad\qquad\qquad\qquad\qquad\qquad\qquad\quad\quad(f, \phi_{h})+b(u_H,u_H,\phi_h),\label{second_1}
\end{eqnarray}
\begin{equation}
(\nabla \cdot \tilde u_{h}, \psi_{h})  = 0.\label{second_2}
\end{equation}
Observe that the approximation $\tilde u_H$ is the result of one step of Newton's
method for the Galerkin $(u_h,p_h)$~approximation in $X_{h,r}\times Q_{h,r-1}$
(equations~(\ref{ten})--(\ref{ten2}) with $H$ replaced by~$h$) with
$(u_H,p_H)$
as initial approximation. For this reason, in this section we study the error
$e_h=u_h-\tilde u_h$.

It is easy to obtain that
\begin{eqnarray}\label{new_e}
\dot e_h+A_h e_h+B_h(u_H,e_h)+B_h(e_h,u_H)=
\Pi_h\rho_{h,H},
\end{eqnarray}
where
$$
 \rho_{h,H} =-F(\epsilon_{h,H},\epsilon_{h,H}),
$$
where, here and in the sequel,
$$
\epsilon_{h,H}=u_H-u_h.
$$

The analysis in this section is closely related to that in
the previous section. However, some extra results are needed. We shall use
the following two bounds,
\begin{align}
\left\| \phi_h\right\|_{L^{2d/(d-1)}} &\le C\left\|\phi_h\right\|_0^{1/2}
 \bigl\|A_h^{1/2}\phi_h\bigr\|_0^{1/2},\qquad \forall \phi_h\in V_{h,r},
\label{cota_hr0}
\\
\left\| \phi_h\right\|_{L^\infty} &\le C\bigl\|A_h^{1/2}\phi_h\bigr\|_0^{1/2}
 \bigl\|A_h\phi_h\bigr\|_0^{1/2},\qquad \forall \phi_h\in V_{h,r},
\label{cota_hr}
\end{align}
which follow from Corollary~4.4 and Lemma~4.4
in~\cite{heyran0}.
Also we shall use the following two bounds
\begin{align}
\label{jul2}
\|A_h^{-1/2}B_h(\epsilon_{h,H},\epsilon_{h,H})\|_0&\le
C\bigl\|\epsilon_{h,H}\bigr\|_0^{1/2} \bigl\|\epsilon_{h,H}\bigr\|_1^{3/2},
\\
\label{jul3}
\|A_h^{-1}B_h(\epsilon_{h,H},\epsilon_{h,H})\|_0&\le
C\bigl\|\epsilon_{h,H}\bigr\|_0 \bigl\|\epsilon_{h,H}\bigr\|_1,
\end{align}
with $C$ independent of~$h$, and where here and in the sequel
$B_h(v_h,w_h)=\Pi_hF(u_h,w_h)$. Both are easily obtained by duality
arguments, the first one from~(\ref{cota_hr0}) and the second one
from~(\ref{cota_hr}).
Notice also that as a consequence of~(\ref{cota_hr0})-(\ref{cota_hr})
and~(\ref{lee1})
we have that $\bigl\|e^{-tA_h}\phi_h\bigr\|_{L^\infty}\le
Ct^{-3/4}\left\|\phi_h\right\|_0$ and $\bigl\|e^{-tA_h}A_h^{1/2}\phi_h\bigr\|_{L^{2d/(d-1)}}\le
Ct^{-3/4}\left\|\phi_h\right\|_0$ so that by using duality arguments together with these two inequalities the following
two bounds easily follow
\begin{align}
\label{jul4}
\bigl\| e^{-(t-s)A_h}B_h(\epsilon_{h,H},\epsilon_{h,H})\bigr\|_0\le &\frac{C}{(t-s)^{3/4}}
\bigl\|\epsilon_{h,H}\bigr\|_0 \bigl\|\epsilon_{h,H}\bigr\|_1.
\\
\label{jul4b}
\bigl\| e^{-(t-s)A_h}A_h^{1/2}B_h(\epsilon_{h,H},\epsilon_{h,H})\bigr\|_0\le &\frac{C}{(t-s)^{3/4}}
\bigl\|\epsilon_{h,H}\bigr\|_1^2.
\end{align}

\begin{lema}
\label{le:newton1}
There exists a positive constant $C=C(M_2)$ such that
$$
\left\| u-u_H\right\|_{L^\infty} \le CH^{1/2}.
$$
\end{lema}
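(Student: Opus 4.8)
The plan is to sidestep the sharp Galerkin estimates of Lemma~\ref{cotas_gal}, whose constants degenerate as $t\to0^+$, and to rely only on bounds that are uniform on $[0,T]$; these are available because $\|u(t)\|_2\le M_2$ for every $t$. Concretely, I would compare $u_H$ with the componentwise Lagrange interpolant $I_Hu(t)\in X_{H,r}$ of $u(t)$ (well defined since $d\le3$ and $r\in\{3,4\}$ in this section, so that $u(t)\in H^2(\Omega)^d\cap V\hookrightarrow C(\overline\Omega)^d$ with $u(t)=0$ on $\partial\Omega$), and then use an inverse inequality to pass from the $L^2$ norm to the $L^\infty$ norm.

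First, split
$$\left\|u-u_H\right\|_{L^\infty}\le \left\|u-I_Hu\right\|_{L^\infty}+\left\|I_Hu-u_H\right\|_{L^\infty}.$$
For the first term, the standard interpolation estimate (see, e.g., \cite{ciar0}) gives $\left\|u-I_Hu\right\|_{L^\infty}\le CH^{2-d/2}\left\|u\right\|_2\le CM_2H^{2-d/2}$, and since $d\le3$ this is $O(H^{1/2})$, the exponent $1/2$ being the worst case $d=3$.

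For the second term, $I_Hu-u_H\in X_{H,r}$, so the inverse inequality~(\ref{inv}) (applied elementwise with $l=m=0$, $q=\infty$, $q'=2$) yields $\left\|I_Hu-u_H\right\|_{L^\infty}\le CH^{-d/2}\left\|I_Hu-u_H\right\|_0$. I would then bound $\left\|I_Hu-u_H\right\|_0\le \left\|I_Hu-u\right\|_0+\left\|u-u_H\right\|_0$, estimating the first summand by $\left\|I_Hu-u\right\|_0\le CH^2\left\|u\right\|_2\le CM_2H^2$, and the second by the time-uniform Galerkin bound coming from~(\ref{gal_cuad}) in Lemma~\ref{cotas_gal} with $r=2$ and $j=0$, namely $\left\|u(t)-u_H(t)\right\|_0\le CH^2$ with $C=C(M_2)$. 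Hence $\left\|I_Hu-u_H\right\|_0\le CM_2H^2$ and $\left\|I_Hu-u_H\right\|_{L^\infty}\le CM_2H^{2-d/2}\le CM_2H^{1/2}$; combining the two terms gives the claimed bound.

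There is no genuine difficulty here; the only point requiring attention is precisely that one must \emph{not} invoke the $O(H^{r-1})$ or $O(H^{r})$ rates near $t=0$, and that trading the crude, time-uniform $O(H^2)$ bound in $L^2$ against the factor $H^{-d/2}$ from the inverse inequality lands exactly on the exponent $2-d/2=1/2$ in three dimensions.
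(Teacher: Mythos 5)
Your proof is correct, and it differs from the paper's in how the non-discrete part of the error is handled, while the discrete part is treated in essentially the same way. The paper splits $u-u_H=(I-\Pi_H)u+(\Pi_Hu-u_H)$ with $\Pi_H$ the discrete Leray projection: the second term is bounded exactly as you do (inverse inequality giving $H^{-3/2}$ against the time-uniform $O(H^2)$ bound in $L^2$, i.e.\ the case $r=2$, $j=0$ of (\ref{gal_cuad}) plus (\ref{eq:error-Pi_h})), while the first term is estimated through the discrete Agmon-type inequality $\|v\|_{L^\infty}\le C\|v\|_{L^6}^{1/2}\|\nabla v\|_{L^6}^{1/2}$ of \cite[(4.43)]{heyran0} combined with the $L^6$-gradient stability of the projection, $\|\nabla\Pi_Hu\|_{L^6}\le C\|Au\|_0$ (inequality (\ref{eq:jul1})). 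You instead compare with the nodal interpolant $I_Hu$, so the corresponding term is disposed of by the classical estimate $\|u-I_Hu\|_{L^\infty}\le CH^{2-d/2}\|u\|_2$, legitimate here because $H^2\hookrightarrow C(\overline\Omega)$ for $d\le 3$; this avoids the projection-stability machinery entirely, at the price of one extra triangle inequality ($\|I_Hu-u_H\|_0\le\|I_Hu-u\|_0+\|u-u_H\|_0$) before invoking the same uniform-in-time $O(H^2)$ Galerkin bound. Both routes hinge on the identical key observation you emphasize: one must use only estimates that do not degenerate as $t\to0^+$, and $H^{-3/2}\cdot H^2=H^{1/2}$. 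Two small points to keep in mind: $I_Hu$ need not lie exactly in $X_{H,r}$ when $\partial\Omega_H\neq\partial\Omega$, but since you apply the inverse inequality (\ref{inv}) elementwise to the piecewise polynomial $I_Hu-u_H$ this is harmless (and the paper is equally informal about the domain approximation); and the constant in the $r=2$ case of (\ref{gal_cuad}) indeed depends only on $M_2$ and the data, matching the claimed $C=C(M_2)$.
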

\begin{proof}
We will use the
fact that, due to Lemma~4.3 and~4.4 in~\cite{heyran0}, and Corollary 4.4
in~\cite{heyran0},
\begin{equation}
\label{eq:jul1}
\left\| \nabla\Pi_Hu\right\|_{L^6}\le C\left\|A
u\right\|_0
\end{equation}
We write $u-u_H=(I-\Pi_H)u+(\Pi_Hu-u_H)$. Applying (\ref{inv}), we have
$\left\| \Pi_H u-u_H\right\|_{L^\infty}\le CH^{-3/2}\left\| \Pi_hu-u_H\right\|_{0}\le
CH^{1/2}$, where in the last inequality we have applied~(\ref{eq:error-Pi_h})
and~(\ref{gal_cuad}). On the other hand, applying~\cite[(4.43)]{heyran0}
\begin{align*}
\left\| (I-\Pi_H)u\right\|_{L^\infty} &\le C\left\| (I-\Pi_H)u\right\|_{L^6}^{1/2}
\left\|\nabla(I-\Pi_H)u\right\|_{L^6}^{1/2}\\
&\le C\left\| (I-\Pi_H)u\right\|_{1}^{1/2}(\left\|\nabla u\right\|_{L^6}+
\left\|\nabla\Pi_hu\right\|_{L^6})^{1/2}.
\end{align*}
Now,
where, in the last inequality we have applied (\ref{sob1}) and \cite[Lemma 4.4]{heyran0}.
Furthermore, applying (\ref{eq:error-Pi_h}), (\ref{sob1}) and~(\ref{eq:jul1}) the proof is
finished.
\end{proof}

\begin{lema}
\label{le:cota_pre_e1_second}
Let $(u,p)$ be the solution of {\rm (\ref{onetwo})--(\ref{ic})}. Then there
exists a positive constant $C$ such that the approximations $u_h$
and $\tilde u_{h}$ to the velocity $u$ over the fine mesh satisfy
the following bound:
\begin{equation}
\|A_h^{l/2}(u_h(t)-\tilde u_h(t))\|_0 \le C H^{7/2-l},\quad r\ge 3,\quad l=0,1,\quad t\in (0,T].
\end{equation}
\end{lema}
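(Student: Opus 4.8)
The plan is to follow the same pattern as in Lemma~\ref{le:cota_pre_e1} (the analogous statement for the first algorithm): write down the mild/variation-of-constants form of the equation satisfied by $e_h=u_h-\tilde u_h$, apply the smoothing estimate~(\ref{lee1}), absorb the linear-in-$e_h$ terms via a generalized Gronwall argument, and then estimate the inhomogeneous term coming from $\Pi_h\rho_{h,H}$, where here $\rho_{h,H}=-F(\epsilon_{h,H},\epsilon_{h,H})$ with $\epsilon_{h,H}=u_H-u_h$. Concretely, set $y_h(t)=A_h^{l/2}e_h(t)$; from~(\ref{new_e}) one gets
\[
y_h(t)=-\int_0^t e^{-(t-s)A_h}A_h^{l/2}\Pi_h\bigl(B_h(u_H,e_h)+B_h(e_h,u_H)\bigr)\,ds
+\int_0^t e^{-(t-s)A_h}A_h^{l/2}\Pi_h\rho_{h,H}(s)\,ds .
\]
For $l=0,1$ the first integrand is controlled using~(\ref{eq:lips0b})/(\ref{eq:lips1}) (via Remark~\ref{re:1}, applicable to $u_H$ and $u_h$, both of which satisfy the threshold condition~(\ref{eq:threshold})), which give $\|A_h^{(-1+l)/2}\Pi_h(B_h(u_H,e_h)+B_h(e_h,u_H))\|_0\le C\|A_h^{l/2}e_h\|_0$; hence that term is bounded by $C\int_0^t (t-s)^{-1/2}\|y_h\|_0\,ds$. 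The generalized Gronwall lemma~\cite[pp.~188--189]{Henry} then reduces the estimate to bounding $\max_{0\le t\le T}\|\int_0^t e^{-(t-s)A_h}A_h^{l/2}\Pi_h\rho_{h,H}(s)\,ds\|_0$.

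For the remaining term I would use the special structure of $\rho_{h,H}$. In the case $l=1$ the cleanest route is the bound~(\ref{jul4b}), $\|e^{-(t-s)A_h}A_h^{1/2}B_h(\epsilon_{h,H},\epsilon_{h,H})\|_0\le C(t-s)^{-3/4}\|\epsilon_{h,H}\|_1^2$; integrating the integrable singularity $(t-s)^{-3/4}$ over $[0,t]$ and using $\|\epsilon_{h,H}\|_1=\|u_H-u_h\|_1\le \|u_H-u\|_1+\|u-u_h\|_1\le C H^2/t^{1/2}$ (from~(\ref{gal_cuad}) with $r=2$, resp.\ the $r$ dependence absorbed since $r\ge3$) gives, after handling the weight $t^{1/2}$ appearing from $\|\epsilon_{h,H}\|_1^2$, a bound of order $H^{5/2}=H^{7/2-1}$. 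For $l=0$ I would instead use~(\ref{jul2}), namely $\|A_h^{-1/2}B_h(\epsilon_{h,H},\epsilon_{h,H})\|_0\le C\|\epsilon_{h,H}\|_0^{1/2}\|\epsilon_{h,H}\|_1^{3/2}$, together with~(\ref{labeta}) (the $\int_0^t s^{-1/2}\|A_h^{1/2}e^{-(t-s)A_h}\|_0\,ds$ estimate) exactly as in Lemma~\ref{le:cota_pre_e1}: this produces $\max_t\|y_h\|_0\le C\,B(\tfrac12,\tfrac12)\max_s s^{1/2}\|A_h^{-1/2}\Pi_h\rho_{h,H}\|_0$, and then $\|\epsilon_{h,H}\|_0\le CH^2$, $\|\epsilon_{h,H}\|_1\le CH^2 s^{-1/2}$ yield $s^{1/2}\cdot (H^2)^{1/2}(H^2 s^{-1/2})^{3/2}=H^{2}\cdot H^{3}\cdot s^{-1/4}$... so one must be slightly careful with the time weights; tracking them shows the $H$-power is $2\cdot(1/2)+2\cdot(3/2)=7/2$, i.e.\ $H^{7/2}=H^{7/2-0}$, with the remaining $s$-powers integrable or bounded on $[0,T]$. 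Either way the key input is that $\epsilon_{h,H}$ is controlled by the \emph{coarse}-mesh Galerkin error, which is $O(H^2)$ in $L^2$ and $O(H^2 t^{-1/2})$ in $H^1$ by Lemma~\ref{cotas_gal}.

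The main obstacle I anticipate is the bookkeeping of the temporal weights: $\rho_{h,H}$ is \emph{quadratic} in $\epsilon_{h,H}$, so the $H^1$-norm contributions each bring a factor $t^{-1/2}$, and one must verify that the resulting singularities (together with the $(t-s)^{-3/4}$ or $s^{-1/2}$ kernels) remain integrable and that no spurious negative power of $t$ survives in the final bound, which is stated uniformly for $t\in(0,T]$. A secondary subtlety is justifying that $u_h$ and $u_H$ both satisfy the threshold condition~(\ref{eq:threshold}) so that Lemma~\ref{le:lips01} applies to the linear terms with the claimed constant (this is the analogue of Remark~\ref{re:1} and follows from Lemma~\ref{cotas_gal} together with~\cite[Remark 4.1]{jbj_regularity}). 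Once those points are dispatched the two cases $l=0,1$ are obtained by the two-line Gronwall argument above.
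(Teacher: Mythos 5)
Your skeleton (mild formulation of (\ref{new_e}), absorption of the linear terms $B_h(u_H,e_h)+B_h(e_h,u_H)$ via Lemma~\ref{le:lips01}, generalized Gronwall, and reduction through (\ref{labeta}) to $\max_s s^{1/2}\bigl\|A_h^{(l-1)/2}\Pi_h\rho_{h,H}\bigr\|_0$, i.e.\ to (\ref{jul5})) is exactly the paper's; the gaps are in the only step that actually produces the exponent $7/2-l$, namely the estimate of the quadratic term $\rho_{h,H}=-F(\epsilon_{h,H},\epsilon_{h,H})$. For $l=1$ the paper does not use (\ref{jul4b}): it writes $s^{1/2}\|\Pi_h\rho_{h,H}\|_0\le Cs^{1/2}\|\epsilon_{h,H}\|_{L^\infty}\|\epsilon_{h,H}\|_1\le C H^{1/2}\cdot H^2$, and the essential ingredient is the $L^\infty$ bound $\|u-u_H\|_{L^\infty}\le CH^{1/2}$ of Lemma~\ref{le:newton1}, which your proposal never invokes. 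Your substitute via (\ref{jul4b}) does not close as sketched: inserting $\|\epsilon_{h,H}(s)\|_1\le CH^2s^{-1/2}$ produces the non-integrable weight $s^{-1}$ against $(t-s)^{-3/4}$; the non-singular alternative $\|\epsilon_{h,H}\|_1\le CH$ only gives $O(H^2)$; and the mixed choice gives $CH^3t^{-1/4}$, which is useless inside the Gronwall argument because the maximum over $(0,T]$ must be finite and the lemma's bound is uniform in $t$. ((\ref{jul4b}) is the right tool later, in Theorem~\ref{teo4_second}, precisely because there the extra weight $s^{(r-1)/2}$ pays for $\|\epsilon_{h,H}\|_1^2$.)

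For $l=0$ you pick the paper's tool (\ref{jul2}), but your bookkeeping is wrong: with only the inputs you list ($\|\epsilon_{h,H}\|_0\le CH^2$ and $\|\epsilon_{h,H}\|_1\le CH^2s^{-1/2}$) one gets $s^{1/2}\|\epsilon_{h,H}\|_0^{1/2}\|\epsilon_{h,H}\|_1^{3/2}\le CH^4s^{-1/4}$, which is neither $H^{7/2}$ nor uniform, and the count ``$2\cdot\tfrac12+2\cdot\tfrac32=7/2$'' is actually $4$. To obtain the stated uniform $H^{7/2}$ bound you must also use the non-singular estimate $\|\epsilon_{h,H}\|_1\le CH$ (the case $r=2$, $j=1$ of (\ref{gal_cuad})), as the paper does through the factorization $s^{1/2}\|\epsilon_{h,H}\|_0^{1/2}\|\epsilon_{h,H}\|_1^{3/2}=\bigl(\|\epsilon_{h,H}\|_0\|\epsilon_{h,H}\|_1\bigr)^{1/2}\bigl\|s^{1/2}\epsilon_{h,H}\bigr\|_1\le (CH^2\cdot CH)^{1/2}\cdot CH^2=CH^{7/2}$. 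So the Gronwall reduction in your proposal is fine, but closing the argument requires Lemma~\ref{le:newton1} in the case $l=1$ and the mixed $r=2$/$r=3$ Galerkin bounds in the case $l=0$; as written, both cases are left with unresolved (and in the $l=1$ case unresolvable by your route without further splitting) time-weight singularities.
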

\begin{proof} Follow the arguments in the proof
of~Lemma~\ref{le:cota_pre_e1} to obtain~(\ref{jul5}). Now, for $l=1$
we write $s^{1/2}\left\| \Pi_h\rho_{h,H}\right\|_0\le Cs^{1/2}\left\|\epsilon_{h,H}
\right\|_{L^\infty}\left\|\epsilon_{h,H}\right\|_1$, so that applying Lemma~\ref{le:newton1}
and~(\ref{gal_cuad}) the proof of the case~$l=1$ is finished.
For $l=0$, applying~(\ref{jul2}) we have
\begin{align*}
s^{1/2}\left\|A_h^{-1/2} \Pi_h\rho_{h,H}\right\|_0&\le
Cs^{1/2}\left\|\epsilon_{h,H}\right\|_0^{1/2}\left\|\epsilon_{h,H}\right\|_1^{3/2}\\
&{}=
C\bigl(\left\|\epsilon_{h,H}\right\|_0
\left\|\epsilon_{h,H}\right\|_1\bigr)^{1/2}\bigl\|s^{1/2}\epsilon_{h,H}\bigr\|_1,
\end{align*}
so that applying~(\ref{gal_cuad}) the proof is finished.
\end{proof}

\begin{lema}
\label{le:aux2_second}
Let $(u,p)$ be the solution of {\rm (\ref{onetwo})--(\ref{ic})}. Then there
exists a positive constant $C$ such that the approximations $u_h$
and $\tilde u_{h}$ to the velocity $u$ over the fine mesh satisfy
the following bound:
\begin{eqnarray*}
\|A_h^{-1}(u_h(t)-\tilde u_h(t))\|_0 \le C H^5,\quad r\ge 3,\quad t\in (0,T].
\end{eqnarray*}
\end{lema}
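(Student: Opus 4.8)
The goal is the estimate $\|A_h^{-1}(u_h-\tilde u_h)\|_0\le CH^5$ for $r\ge 3$. Writing $e_h=u_h-\tilde u_h$, equation~(\ref{new_e}) gives
$$
e_h(t)=\int_0^t e^{-(t-s)A_h}A_h^{-1}\Pi_h\bigl(-B_h(u_H,e_h)-B_h(e_h,u_H)+\rho_{h,H}\bigr)\,ds,
$$
so that, setting $y_h(t)=A_h^{-1}e_h(t)$, I would bound the two contributions separately: the linear terms $B_h(u_H,e_h)+B_h(e_h,u_H)$, which I expect to handle by a generalized Gronwall argument exactly as in Lemma~\ref{le:Cons-4.2} (and its source \cite[Lemma~4.13]{jbj_regularity}), and the forcing term $\rho_{h,H}=-F(\epsilon_{h,H},\epsilon_{h,H})$, which is where the new work lies. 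The structure should mirror Lemma~\ref{le:Cons-4.2} verbatim: first produce a bound of the form
$$
\bigl\|e^{-(t-s)A_h}A_h^{-1}\Pi_h\bigl(B_h(u_H,e_h)+B_h(e_h,u_H)\bigr)\bigr\|_0\le C\Bigl(\tfrac{1}{\sqrt{t-s}}+\tfrac{1}{\sqrt s}\Bigr)\|y_h\|_0+C\frac{H^3}{\sqrt s}\|e_h\|_1,
$$
then apply \cite[Lemma~4.9]{jbj_regularity} to absorb the singular kernels, reducing matters to estimating $\max_t\bigl\|\int_0^t e^{-(t-s)A_h}A_h^{-1}\Pi_h\rho_{h,H}(s)\,ds\bigr\|_0$ plus the lower-order term $CH^3\max_t\|e_h(t)\|_1$. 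The latter is controlled by Lemma~\ref{le:cota_pre_e1_second}, which gives $\|e_h\|_1\le CH^{5/2}$ (the $l=1$ case), hence $H^3\cdot H^{5/2}\ll H^5$ — actually this even oversatisfies the claim, so no sharpness issue there.

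**The main term.** The heart of the proof is $\max_t\bigl\|\int_0^t e^{-(t-s)A_h}A_h^{-1}\Pi_h\rho_{h,H}(s)\,ds\bigr\|_0\le CH^5$. Here I would use the smoothing estimate~(\ref{jul4}),
$$
\bigl\|e^{-(t-s)A_h}B_h(\epsilon_{h,H},\epsilon_{h,H})\bigr\|_0\le \frac{C}{(t-s)^{3/4}}\|\epsilon_{h,H}\|_0\|\epsilon_{h,H}\|_1,
$$
combined with $\|A_h^{-1}\|_0\le C$ — but that alone only yields $H^2\cdot H^1$-type scaling after inserting~(\ref{gal_cuad}), which is $H^3$, not enough. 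The gain must come from pushing the $A_h^{-1}$ through and exploiting the $L^2$-superconvergence of $u_H-u_h$: the point is that $\epsilon_{h,H}=u_H-u_h$, though only $O(H^2)$ in $H^1$, is much smaller in weaker norms. I would therefore follow the dual/interpolation route used for~(\ref{jul3}), refined so as to extract $\|A_h^{-1}\rho_{h,H}\|_0$-type quantities; more precisely I expect to mimic \cite[(4.60) in Lemma~4.14]{jbj_regularity} — the same reference invoked in Lemma~\ref{le:Cons-4.2} — with the roles of the errors there played by $u-u_H$, and to use~(\ref{gal_cuad}) and~(\ref{gal_int}) with $h$ replaced by $H$. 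Concretely, splitting $\epsilon_{h,H}=(u_H-u)-(u_h-u)$ and using for the cross and diagonal terms the bounds $\|u_H-u\|_0\le CH^r t^{-(r-2)/2}$, $\|u_H-u\|_1\le CH^{r-1}t^{-(r-2)/2}$, $\int_0^t\|u_H-u\|_1^2\le CH^4$ (from Lemma~\ref{cotas_gal}), together with $\|u_h-u\|_0\le Ch^r t^{-(r-2)/2}$, $\|u_h-u\|_1\le Ch^{r-1}t^{-(r-2)/2}$, and $h<H$, the worst term should scale like $H^2\cdot H^2\cdot|\log|$-free product, i.e.\ $O(H^5)$, once the time integral against $(t-s)^{-3/4}$ and the $L^2$-in-time bound are distributed via Cauchy–Schwarz.

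**Expected obstacle.** The delicate point is getting the power of $H$ all the way up to $5$ rather than $3$ or $4$: the naive estimate using $\|\epsilon_{h,H}\|_0\|\epsilon_{h,H}\|_1\sim H^r\cdot H^{r-1}=H^{2r-1}$ is already $H^5$ for $r=3$ and better for $r=4$, so in fact for $r\ge 3$ the quadratic structure of $\rho_{h,H}$ is exactly what saves us — the difficulty is purely in making sure the $A_h^{-1}$ and the convolution against $e^{-(t-s)A_h}$ are used so that no positive power of $1/H$ or uncontrolled $|\log H|$ sneaks back in, and that the time-weight singularities $t^{-(r-2)/2}$ near $s=0$ are integrable against $(t-s)^{-3/4}$, which they are for $r\le 4$ (the borderline being $(r-2)/2<1$). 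So I would be careful to (i) invoke~(\ref{jul4})/(\ref{jul3}) rather than crude product estimates, (ii) handle $s\in(0,t/2)$ and $s\in(t/2,t)$ separately so the two singularities never collide, and (iii) use~(\ref{gal_int}) (with $h\to H$) for the terms that are not integrable pointwise in $s$. The linear-term Gronwall part is entirely routine given Lemmas~\ref{le:Cons-4.2}'s template and Lemma~\ref{le:cota_pre_e1_second}; I anticipate the writeup to be little more than ``argue exactly as in Lemma~\ref{le:Cons-4.2}, replacing the estimates on $z$ by the bounds on $u-u_H$ from Lemma~\ref{cotas_gal}.''
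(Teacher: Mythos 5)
Your plan follows essentially the same route as the paper: set $y_h=A_h^{-1}e_h$, run the generalized Gronwall argument of Lemma~\ref{le:Cons-4.2} (the term $B_h(e_h,u_H)$ also produces a harmless $CH^2\|e_h\|_0/\sqrt{s}$ contribution which, like your $H^3\|e_h\|_1$ term, is $O(H^{11/2})$ by Lemma~\ref{le:cota_pre_e1_second}), and control the forcing term through the quadratic structure of $\rho_{h,H}$ by applying~(\ref{jul3}) pointwise and then~(\ref{gal_int}) with Cauchy--Schwarz in time, which is exactly the paper's argument and gives $CH^3\cdot CH^2=CH^5$. The only slips are cosmetic: your intermediate inequality omits the $H^2\|e_h\|_0$ term, and the pointwise product $\|\epsilon_{h,H}\|_0\|\epsilon_{h,H}\|_1$ carries a singularity $t^{-(r-2)}$ rather than $t^{-(r-2)/2}$, which is precisely why the $L^2$-in-time bound~(\ref{gal_int}) you invoke as a fallback is in fact the needed tool.
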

\begin{proof}
Follow the arguments in the proof of Lemma~\ref{le:Cons-4.2}, but notice that now
due to the terms $(e_h\cdot \nabla) u_H$ and~$(\nabla\cdot u_H)e_H$, instead
of~(\ref{jul6}) we have
\begin{align*}
\left\| e^{-(t-s)A_h}
A_h^{-1}\Pi_h\bigl((u_H\cdot\nabla) e_h\bigr)\right\|_0\le &C \left(\frac{1}{\sqrt{t-s}}+\frac{1}{\sqrt{s}}\right)\|A_h^{-1}e_h\|_0\nonumber\\
&{}+
C\Bigl( \frac{H^3}{\sqrt{s}} \|e_h\|_1+
\frac{H^2}{\sqrt{s}} \|e_h\|_0\Bigr).
\nonumber
\end{align*}
Thus, instead of~(\ref{jul8}) we arrive at
\begin{align*}
\max_{0\le t\le T}\|y_h(t)\|_0\le &C\max_{0\le t\le T}\left\|\int_0^te^{-(t-s)A_h} A_h^{-1}\Pi_h\rho_{h,H}(s)\,ds\right\|
\nonumber\\
&{}+C \bigl(H^3 \max_{0\le t\le T}\|e_h(t)\|_1+H^2 \max_{0\le t\le T}\|e_h(t)\|_0\bigr).
\end{align*}
Thanks to Lemma~\ref{le:cota_pre_e1_second} we have that the last
two terms on the right-hand side above are bounded by~$CH^{11/2}$. For the first
one, applying first~(\ref{jul3}) and then~(\ref{gal_int}) we conclude that it is
also bounded by $CH^5$.
\end{proof}

\begin{lema}
\label{le:cota_pre_e1_second_jul}
Let $(u,p)$ be the solution of {\rm (\ref{onetwo})--(\ref{ic})}. Then there
exists a positive constant $C$ such that the approximations $u_h$
and $\tilde u_{h}$ to the velocity $u$ over the fine mesh satisfy
the following bound:
\begin{equation}
\|A_h^{-1/2}(u_h(t)-\tilde u_h(t))\|_0
\le \frac{C}{t^{1/2}} H^{5},\quad r\ge 3,\quad t\in (0,T].
\end{equation}
\end{lema}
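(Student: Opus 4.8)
The plan is to adapt the proof of Lemma~\ref{le:ultipri} --- the corresponding estimate for the first algorithm --- by inserting the weight $t^{1/2}$, while handling the convective terms $B_h(u_H,e_h)+B_h(e_h,u_H)$ of~(\ref{new_e}) as in the proof of Lemma~\ref{le:aux2_second}. Put $e_h=u_h-\tilde u_h$ and $y_h(t)=t^{1/2}A_h^{-1/2}e_h(t)$. From~(\ref{new_e}), together with $\dot y_h=\frac{1}{2}t^{-1/2}A_h^{-1/2}e_h+t^{1/2}A_h^{-1/2}\dot e_h$ and $t^{1/2}A_h^{1/2}e_h=A_h y_h$, one gets
\[
\dot y_h+A_h y_h=\frac{1}{2t^{1/2}}A_h^{-1/2}e_h-t^{1/2}A_h^{-1/2}\bigl(B_h(u_H,e_h)+B_h(e_h,u_H)\bigr)+t^{1/2}A_h^{-1/2}\Pi_h\rho_{h,H},
\]
so that, since $y_h(0)=0$, Duhamel's formula yields
\[
y_h(t)=\int_0^t e^{-(t-s)A_h}\Bigl[\frac{A_h^{-1/2}e_h}{2s^{1/2}}-s^{1/2}A_h^{-1/2}\bigl(B_h(u_H,e_h)+B_h(e_h,u_H)\bigr)+s^{1/2}A_h^{-1/2}\Pi_h\rho_{h,H}\Bigr]\,ds.
\]

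For the convective integral I would proceed as in the proof of Lemma~\ref{le:aux2_second}: write $e^{-(t-s)A_h}A_h^{-1/2}=A_h^{1/2}e^{-(t-s)A_h}A_h^{-1}$, estimate $\|A_h^{1/2}e^{-(t-s)A_h}\|_0$ by~(\ref{lee1}), split $u_H=u+(u_H-u)$, and for the $u$-part use a duality argument in which the gradient is integrated by parts off $u$, so that only $\|u\|_2\le M_2$ and $\|e_h\|_{-1}\le C\|A_h^{-1/2}e_h\|_0$ enter, with no singularity in $s$; the $(u_H-u)$-part is controlled by $\|u_H-u\|_0\le Cs^{-1/2}H^3$ and $\|\nabla\cdot u_H\|_{-1}=\|\nabla\cdot(u_H-u)\|_{-1}\le Cs^{-1/2}H^3$ from~(\ref{gal_cuad}), together with $\|u-u_H\|_\infty\le CH^{1/2}$ from Lemma~\ref{le:newton1}. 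Absorbing the surviving $s^{1/2}$, the convective integral is then bounded by $\int_0^t C(t-s)^{-1/2}\|y_h(s)\|_0\,ds+CH^3\max_{0\le s\le t}\|e_h(s)\|_1+CH^2\max_{0\le s\le t}\|e_h(s)\|_0$, and a generalized Gronwall lemma~\cite[pp.~188--189]{Henry} gives
\[
\max_{0\le t\le T}\|y_h\|_0\le C\max_{0\le t\le T}\Bigl\|\int_0^t e^{-(t-s)A_h}s^{1/2}A_h^{-1/2}\Pi_h\rho_{h,H}\,ds\Bigr\|_0+C\max_{0\le t\le T}\Bigl\|\int_0^t e^{-(t-s)A_h}\frac{A_h^{-1/2}e_h}{2s^{1/2}}\,ds\Bigr\|_0+CH^3\!\!\max_{0\le t\le T}\|e_h\|_1+CH^2\!\!\max_{0\le t\le T}\|e_h\|_0.
\]

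It then remains to bound the four terms on the right by $CH^5$. For the second one, writing again $e^{-(t-s)A_h}A_h^{-1/2}=A_h^{1/2}e^{-(t-s)A_h}A_h^{-1}$ and using~(\ref{labeta}), it is $\le C\max_{0\le s\le T}\|A_h^{-1}e_h(s)\|_0$, hence $\le CH^5$ by Lemma~\ref{le:aux2_second}. For the first, the same manipulation and~(\ref{labeta}) bound it by $C\max_{0\le s\le T}s\|A_h^{-1}\Pi_h\rho_{h,H}(s)\|_0$; since $\Pi_h\rho_{h,H}=-B_h(\epsilon_{h,H},\epsilon_{h,H})$, from~(\ref{jul3}), the decomposition $\epsilon_{h,H}=(u_H-u)-(u_h-u)$ with $h<H$, and~(\ref{gal_cuad}) (used with the index $3$),
\[
s\|A_h^{-1}\Pi_h\rho_{h,H}\|_0\le Cs\|\epsilon_{h,H}\|_0\|\epsilon_{h,H}\|_1=C\bigl(s^{1/2}\|\epsilon_{h,H}\|_0\bigr)\bigl(s^{1/2}\|\epsilon_{h,H}\|_1\bigr)\le CH^3\cdot CH^2=CH^5.
\]
Finally, by Lemma~\ref{le:cota_pre_e1_second} one has $\|e_h\|_1\le CH^{5/2}$ and $\|e_h\|_0\le CH^{7/2}$ uniformly in $t$, so the last two terms are $\le CH^{11/2}\le CH^5$ (recall $H<h_0<1$). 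Collecting, $\max_{0\le t\le T}t^{1/2}\|A_h^{-1/2}(u_h(t)-\tilde u_h(t))\|_0\le CH^5$, which is the assertion.

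I expect the convective estimate to be the only delicate point: unlike the first algorithm, the term $B_h(e_h,u_H)$ is now present, so $\|A_h^{-1}(B_h(u_H,e_h)+B_h(e_h,u_H))\|_0$ must be controlled without integrating the gradient off $u_H$; the remedy, already worked out in the proof of Lemma~\ref{le:aux2_second} (using Lemma~\ref{le:newton1} and repeated passage of derivatives onto smooth test functions and onto the factor $u$), keeps the $(u_H-u)$-contributions at order $H^3$ when paired with $\|e_h\|_1$ and at order $H^2$ when paired with $\|e_h\|_0$. Once this is in place, the remainder is pure bookkeeping of powers of $H$, which close at $H^5$ with room to spare thanks to the $O(H^{7/2})$ and $O(H^{5/2})$ bounds for $e_h$ in Lemma~\ref{le:cota_pre_e1_second}.
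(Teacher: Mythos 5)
Your proposal is correct and follows essentially the same route as the paper: the weighted quantity $y_h=t^{1/2}A_h^{-1/2}e_h$, the splitting $u_H=u+(u_H-u)$ to bound $\|A_h^{-1}(B_h(u_H,e_h)+B_h(e_h,u_H))\|_0$ by $C(\|A_h^{-1/2}e_h\|_0+s^{-1/2}H^3\|e_h\|_1+s^{-1/2}H^2\|e_h\|_0)$, the generalized Gronwall lemma, (\ref{labeta}) together with (\ref{jul3}) and (\ref{gal_cuad}) for the $\rho_{h,H}$ term, Lemma~\ref{le:aux2_second} for $\|A_h^{-1}e_h\|_0$, and Lemma~\ref{le:cota_pre_e1_second} for the remaining $O(H^{11/2})$ terms. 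This matches the paper's proof (cf.\ (\ref{eq:antes_gro_jul})--(\ref{jul13})) in all essential points.
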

\begin{proof} Let $y_h(t)=t^{1/2}A_h^{-1/2} e_h(t)$ and
follow the arguments in the proof
of Lemma~\ref{le:aux2}  so that instead of~(\ref{eq:antes_gro}) we now
have
\begin{eqnarray}\label{eq:antes_gro_jul}
\left\| y_h(t)\right\|_0 &\le& \int_0^t \frac{C}{\sqrt{t-s}} \left\|
A_h^{-1}s^{1/2}(B_h(e_h,u_H)+B_h(u_H,e_h)\right\|_0\,ds  \nonumber\\
&&+\biggl\| \int_0^t e^{-(t-s)A_h}
A_h^{-1/2}\Pi_hs^{1/2}\rho_{h,H}(s)\,ds \biggr\|_0,
\nonumber
\\
&&+\biggl\| \frac{1}{2}\int_0^t e^{-(t-s)A_h}
s^{-1/2}A_h^{-1/2}e_h(s)\,ds \biggr\|_0.
\end{eqnarray}
Now observe that by using $\left\| u_H(s)-u(s)\right\|_j\le CH^{3-j}/s^{1/2}$, instead
of~(\ref{jul11}) we now have
$$
 \left\|
A_h^{-1}s^{1/2}(B_h(e_h,u_H)+B_h(u_H,e_h)\right\|_0
\le C\Bigl( \left\|y_h(s)\right\|_0 + H^3\left\|e_h\right\|_1+H^2\left\|e_h\right\|_0\Big).
$$
Thus, instead of~(\ref{jul12}) we now get
\begin{align}
\max_{0\le t\le T}\left\| y_h(t)\right\|_0 \le &C
B\left(\frac{1}{2},\frac{1}{2}\right)\Bigl(\max_{0\le s\le T} s\bigl\|
A_{h}^{-1}\Pi_h \rho_{h,H} \bigr\|_0+
\max_{0\le s\le T}\bigl\|
A_{h}^{-1} e_h(s) \bigr\|_0\Bigr)
\nonumber \\
&{}+C\bigl( H^3\max_{0\le t\le T}\|e_h\|_1+H^2\max_{0\le t\le T}\|e_h\|_0\bigr).
\label{jul13}
\end{align}
Due to Lemma~\ref{le:cota_pre_e1_second} we have that the last two terms
on the right-hand side of~(\ref{jul13}) are $o(H^5)$, and due
to~Lemma~\ref{le:aux2_second} the second one is $O(H^5)$. Finally
due to~(\ref{jul3}) the first one can be bounded by
$C\bigl\|s^{1/2} \epsilon_{h,H}\bigr\|_0\bigl\|s^{1/2} \epsilon_{h,H}\bigr\|_1$, which, due to
(\ref{gal_cuad}) is also $O(H^5)$.
\end{proof}

\begin{Theorem}
\label{teo1_second}
Let $(u,p)$ be the solution of {\rm (\ref{onetwo})--(\ref{ic})}. Then there
exists a positive constant $C$ such that the approximations $u_h$
and $\tilde u_{h}$ to the velocity $u$ over the fine mesh satisfy
the following bound:
\begin{equation}\label{supercuad_tg_second}
 \qquad \|u_{h}(t)-\tilde u_{h}(t)\|_{0} \le
 \frac{C}{t}
 H^{5},\quad t\in(0,T],\quad r\ge 3.
\end{equation}
\end{Theorem}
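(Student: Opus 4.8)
The plan is to follow the pattern of the proofs of Theorems~\ref{teo1} and~\ref{teo3}: multiply the error by~$t$ to gain one power of~$H$, absorb the linearised convective term with a generalized Gronwall inequality, and estimate what is left; the point will be to organise these estimates around the operator~$A_h^{-1/2}$ so that no logarithmic factor is incurred. Recall that $e_h=u_h-\tilde u_h$ solves~(\ref{new_e}) with $e_h(0)=0$. Setting $y_h(t)=t\,e_h(t)$, a short computation gives $\dot y_h+A_h y_h+t\bigl(B_h(u_H,e_h)+B_h(e_h,u_H)\bigr)=t\,\Pi_h\rho_{h,H}+e_h$, and hence, since $y_h(0)=0$,
$$
y_h(t)=-\int_0^t e^{-(t-s)A_h}s\bigl(B_h(u_H,e_h)+B_h(e_h,u_H)\bigr)(s)\,ds+\int_0^t e^{-(t-s)A_h}\bigl(s\,\Pi_h\rho_{h,H}(s)+e_h(s)\bigr)\,ds.
$$

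First I would dispose of the two linear convective terms. By~(\ref{eq:lips1}), applied with $u_H$ in the role of one of the fixed arguments (legitimate by Remark~\ref{re:1}, since $u_H\in V_{H,r}\subset V_{h,r}$), one has $\bigl\|A_h^{-1/2}\Pi_h\bigl(F(u_H,e_h)+F(e_h,u_H)\bigr)(s)\bigr\|_0\le C\|e_h(s)\|_0$, so by~(\ref{lee1})
$$
\Bigl\|e^{-(t-s)A_h}s\bigl(B_h(u_H,e_h)+B_h(e_h,u_H)\bigr)(s)\Bigr\|_0\le\frac{C}{\sqrt{t-s}}\,s\,\|e_h(s)\|_0=\frac{C}{\sqrt{t-s}}\,\|y_h(s)\|_0 .
$$
A generalized Gronwall lemma~\cite[pp.~188--189]{Henry} then absorbs this contribution and leaves
$$
\max_{0\le t\le T}\|y_h(t)\|_0\le C\Bigl(\max_{0\le t\le T}\Bigl\|\int_0^t e^{-(t-s)A_h}e_h(s)\,ds\Bigr\|_0+\max_{0\le t\le T}\Bigl\|\int_0^t e^{-(t-s)A_h}s\,\Pi_h\rho_{h,H}(s)\,ds\Bigr\|_0\Bigr).
$$

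For the first term on the right I would write $e_h=A_h^{1/2}\bigl(A_h^{-1/2}e_h\bigr)$ and combine~(\ref{lee1}) with Lemma~\ref{le:cota_pre_e1_second_jul}: $\|e^{-(t-s)A_h}e_h(s)\|_0\le C(t-s)^{-1/2}\|A_h^{-1/2}e_h(s)\|_0\le C(t-s)^{-1/2}s^{-1/2}H^5$, and~(\ref{labeta}) integrates this to $CH^5$. For the second term, $\Pi_h\rho_{h,H}=-B_h(\epsilon_{h,H},\epsilon_{h,H})$, so~(\ref{jul4}) gives $\|e^{-(t-s)A_h}s\,\Pi_h\rho_{h,H}(s)\|_0\le C\,s\,(t-s)^{-3/4}\|\epsilon_{h,H}(s)\|_0\|\epsilon_{h,H}(s)\|_1$; since $\|\epsilon_{h,H}(s)\|_j\le CH^{3-j}s^{-1/2}$ for $j=0,1$ (by~(\ref{gal_cuad}) and $h<H$), the factor~$s$ exactly cancels the $s^{-1}$ singularity of $\|\epsilon_{h,H}(s)\|_0\|\epsilon_{h,H}(s)\|_1$ and $\int_0^t C H^5(t-s)^{-3/4}\,ds\le CT^{1/4}H^5$. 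Putting the three displays together yields $\max_{0\le t\le T}\|y_h(t)\|_0\le CH^5$, i.e.\ $\|e_h(t)\|_0\le CH^5/t$.

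The only point where the argument genuinely differs from that of Theorems~\ref{teo1}--\ref{teo3} is this last bookkeeping of temporal singularities, which is what keeps the bound free of logarithms: the $e_h$-term must be handled through the $A_h^{-1/2}$-scaled estimate of Lemma~\ref{le:cota_pre_e1_second_jul} (which carries the decay $s^{-1/2}$) rather than through Lemma~\ref{l4b}, and the quadratic residual $\rho_{h,H}$ must be estimated through the sharp $(t-s)^{-3/4}$ smoothing bound~(\ref{jul4}), whose $\|\epsilon_{h,H}\|_0\|\epsilon_{h,H}\|_1\le CH^5/s$ behaviour is then compensated by the explicit weight~$s$. Everything else parallels the corresponding steps for the first algorithm and is routine.
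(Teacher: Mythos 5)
Your proof is correct and follows essentially the same route as the paper's: set $y_h=te_h$, absorb the linearized terms $B_h(u_H,e_h)+B_h(e_h,u_H)$ via (\ref{eq:lips1}) and the generalized Gronwall lemma, bound the quadratic residual $s\,\Pi_h\rho_{h,H}$ with the smoothing estimate (\ref{jul4}) together with (\ref{gal_cuad}), and handle the $e_h$ term through (\ref{labeta}) and Lemma~\ref{le:cota_pre_e1_second_jul}. Your write-up merely makes explicit the bookkeeping of temporal weights that the paper leaves implicit.
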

\begin{proof}
Let $y_h(t)=te_h(t)$ and argue as in the proof of Theorem~\ref{teo1} so that
similarly to~(\ref{delteo1}) we now get
\begin{align*}
 \max_{0\le t\le T}\left\| y_h(t)\right\|_0
 \le &C \biggl(\max_{0\le
t\le T} \biggl\| \int_0^t\! e^{-A_h(t-s)}s\Pi_h\rho_{h,H}\,ds \biggr\|_0
\nonumber\\
{}&+ \max_{0\le t\le T}\biggl\|\int_0^t\!
e^{-A_h(t-s)}e_h\,ds \biggr\|_0\biggr).
\nonumber
\end{align*}
Using~(\ref{jul4}) to bound the first term on the right-hand side above,
and~(\ref{labeta}) for the second one, we get
\begin{align*}
 \max_{0\le t\le T}\left\| y_h(t)\right\|_0
 \le &C\Bigl(T^{1/4} \max_{0\le t\le T} \bigl\|t^{1/2} \epsilon_{h,H}(t)\bigr\|_0
\bigl\|t^{1/2} \epsilon_{h,H}(t)\bigr\|_1
\\
&{}+ B\left(\frac{1}{2},\frac{1}{2}\right)
\max_{0\le t\le T} \bigl\|t^{1/2} A_h^{-1/2}e_h(t)\bigr\|_0\Bigr),
\end{align*}
so that applying~(\ref{gal_cuad}) and
Lemma~\ref{le:cota_pre_e1_second_jul} the proof is finished.
\end{proof}

\begin{Theorem}
\label{teo4_second}
Let $(u,p)$ be the solution of {\rm (\ref{onetwo})--(\ref{ic})}. Then there
exists a positive constant $C$ such that the approximations $u_h$
and $\tilde u_{h}$ to the velocity $u$ over the fine mesh satisfy
the following bound:
\begin{equation}\label{supercub_tg_1_second}
 \qquad \|u_{h}(t)-\tilde u_{h}(t)\|_{1} \le
 \frac{C}{t^{(r-1)/2}}
 H^{r+1},\quad t\in(0,T],\quad r=3,4.
\end{equation}
\end{Theorem}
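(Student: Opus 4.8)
The plan is to reproduce, in the present setting, the chain of arguments that took us from Lemma~\ref{le:cota_pre_e1_second_jul} up to Theorem~\ref{teo4} for the first algorithm, exploiting two features special to the second algorithm. First, the forcing term $\Pi_h\rho_{h,H}=-\Pi_hF(\epsilon_{h,H},\epsilon_{h,H})$ is \emph{quadratic} in $\epsilon_{h,H}=u_H-u_h$, so the smoothing estimates~(\ref{jul4})--(\ref{jul4b}), with the \emph{integrable} factor $(t-s)^{-3/4}$, play the role that Lemma~\ref{l4b} played before; this is precisely why no $|\log h|$ factor appears. Second, the sharp negative-norm bounds for $e_h=u_h-\tilde u_h$ are already available from Lemma~\ref{le:cota_pre_e1_second}, Lemma~\ref{le:aux2_second}, Lemma~\ref{le:cota_pre_e1_second_jul} and Theorem~\ref{teo1_second}. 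Concretely, set $\gamma=(r-1)/2$ and $y_h(t)=t^{\gamma}A_h^{1/2}e_h(t)$, so that $\|y_h(t)\|_0=t^{\gamma}\|e_h(t)\|_1$ and the assertion is equivalent to $\max_{0\le t\le T}\|y_h(t)\|_0\le CH^{r+1}$. From~(\ref{new_e}) one gets $\dot y_h+A_hy_h+t^{\gamma}A_h^{1/2}(B_h(u_H,e_h)+B_h(e_h,u_H))=\gamma t^{\gamma-1}A_h^{1/2}e_h+t^{\gamma}A_h^{1/2}\Pi_h\rho_{h,H}$.

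Writing the variation-of-constants formula for $y_h$, I would first absorb the convective contribution by a generalized Gronwall argument: since $s^{\gamma}A_h^{1/2}B_h(u_H,e_h)=A_h^{1/2}[s^{\gamma}B_h(u_H,e_h)]$, the estimate~(\ref{lee1}) and the bound $\|B_h(u_H,e_h)\|_0\le K\|e_h\|_1$ from~(\ref{eq:lips0}) of Lemma~\ref{le:lips01} (and likewise for $B_h(e_h,u_H)$, both legitimate because $u_H$ and $u_h$ satisfy the threshold~(\ref{eq:threshold}) uniformly in $t$, cf. Remark~\ref{re:1}) bound this term by $\int_0^tC(t-s)^{-1/2}\|y_h(s)\|_0\,ds$. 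At this $A_h^{1/2}$-level the extra pieces $(e_h\cdot\nabla)u_H$ and $(\nabla\cdot u_H)e_h$ that complicated Lemma~\ref{le:aux2_second} produce no new terms, precisely because $\|B_h(e_h,u_H)\|_0$ is controlled directly by $\|e_h\|_1$. Gronwall then reduces matters to bounding $\max_t\|\int_0^te^{-(t-s)A_h}s^{\gamma}A_h^{1/2}\Pi_h\rho_{h,H}\,ds\|_0$ and $\max_t\|\int_0^te^{-(t-s)A_h}\gamma s^{\gamma-1}A_h^{1/2}e_h(s)\,ds\|_0$.

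For the principal term I would use~(\ref{jul4b}) to get $\|e^{-(t-s)A_h}A_h^{1/2}\Pi_h\rho_{h,H}\|_0\le C(t-s)^{-3/4}\|\epsilon_{h,H}\|_1^2$, and then insert $\|\epsilon_{h,H}(s)\|_1\le\|u_H-u\|_1+\|u_h-u\|_1\le Cs^{-(r-2)/2}H^{r-1}$ from~(\ref{gal_cuad}); the resulting integral $CH^{2(r-1)}\int_0^t(t-s)^{-3/4}s^{(3-r)/2}\,ds$ is $O(T^{1/4}H^4)$ for $r=3$ and $O(t^{-1/4}H^6)$ for $r=4$ by the Beta function. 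For the $e_h$-correction term I would trade a fractional power of $A_h$ off the semigroup: for $r=4$ this reads $s^{1/2}A_h^{1/2}e_h=A_h^{1/2}[s^{1/2}e_h]$, so with $\|A_h^{1/2}e^{-(t-s)A_h}\|_0\le C(t-s)^{-1/2}$ from~(\ref{lee1}) and the sharp bound $\|e_h(s)\|_0\le Cs^{-1}H^5$ of Theorem~\ref{teo1_second} the term equals $CH^5\int_0^t(t-s)^{-1/2}s^{-1/2}\,ds=CH^5B(\tfrac12,\tfrac12)$; for $r=3$ the bare integral would diverge at $s=0$, so I would either interpolate the bounds $\|e_h\|_0\le CH^{7/2}$ (Lemma~\ref{le:cota_pre_e1_second}) and $\|A_h^{-1/2}e_h\|_0\le Cs^{-1/2}H^5$ (Lemma~\ref{le:cota_pre_e1_second_jul}) to get $\|A_h^{-1/4}e_h\|_0\le Cs^{-1/4}H^{17/4}$ and write $A_h^{1/2}e_h=A_h^{3/4}[A_h^{-1/4}e_h]$ (whence $CH^{17/4}\int_0^t(t-s)^{-3/4}s^{-1/4}\,ds=CH^{17/4}B(\tfrac34,\tfrac14)$), or split the integral at $s=t/2$, using $\|A_h^{-1}e_h\|_0\le CH^5$ (Lemma~\ref{le:aux2_second}) near $s=0$ and $\|e_h\|_0\le Cs^{-1}H^5$ near $s=t$, getting $O(t^{-1/2}H^5)$. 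Either way no logarithm enters. Collecting: for $r=3$ the bounds are already $O(H^4)$, while for $r=4$ the residual singular factors $t^{-1/4}H^6$ (and, if the split route is used for the correction term, $t^{-1/2}H^5$) are each absorbed on the complementary initial slice $t\le H^4$ (resp. $t\le H^2$) by the crude uniform bound $\|e_h(t)\|_1\le CH^{5/2}$ of Lemma~\ref{le:cota_pre_e1_second}, so that $\max_t\|y_h(t)\|_0\le CH^{r+1}$, which is~(\ref{supercub_tg_1_second}).

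The main obstacle I anticipate is the bookkeeping of the time weights. The weight $t^{\gamma}$ built into $y_h$ is tuned exactly to the claimed order, so each application of~(\ref{lee1}) or~(\ref{jul4b}) must spend fractional powers of $A_h$ in such a way that the leftover powers of $s$ and $t-s$ assemble into a \emph{bounded} Beta integral rather than a divergent one; the two borderline cases---$r=3$ in the $e_h$-correction term and $r=4$ in the $\rho_{h,H}$-term---are the delicate ones, and they are precisely the places where one must pair the Duhamel estimate (valid for $t$ not too small) with the elementary uniform estimate of Lemma~\ref{le:cota_pre_e1_second} on the remaining initial interval.
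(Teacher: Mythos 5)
Your skeleton is the same as the paper's: the weighted variable $y_h(t)=t^{(r-1)/2}A_h^{1/2}e_h(t)$, absorption of $B_h(u_H,e_h)+B_h(e_h,u_H)$ by a generalized Gronwall argument at the $A_h^{1/2}$ level (so that, as you note, no extra $H^3\|e_h\|_1$-type terms appear), the smoothing bound~(\ref{jul4b}) for the quadratic forcing $\Pi_h\rho_{h,H}=-\Pi_hF(\epsilon_{h,H},\epsilon_{h,H})$, and Theorem~\ref{teo1_second} together with Lemma~\ref{le:cota_pre_e1_second} for the lower-order correction term. Where you diverge is the bookkeeping of the time weight. The paper splits $s^{(r-1)/2}\|\epsilon_{h,H}\|_1^2=\bigl(s^{1/2}\|\epsilon_{h,H}\|_1\bigr)\bigl(s^{(r-2)/2}\|\epsilon_{h,H}\|_1\bigr)$ and uses \emph{two different} rates from~(\ref{gal_cuad}) --- $\|\epsilon_{h,H}\|_1\le CH^2 s^{-1/2}$ for one factor and $\le CH^{r-1}s^{-(r-2)/2}$ for the other --- so the forcing term is bounded uniformly by $CT^{1/4}H^{r+1}$ and no singular residue ever appears; similarly, for $r=3$ it disposes of the correction term with the elementary interpolation $\|t^{1/2}e_h\|_0\le\|te_h\|_0^{1/2}\|e_h\|_0^{1/2}\le CH^{17/4}$, essentially your $A_h^{-1/4}$ route without fractional powers of $A_h$. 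You instead use the same rate $s^{-(r-2)/2}H^{r-1}$ for both factors, which for $r=4$ produces the bound $CH^6t^{-1/4}$ and forces the initial-slice patch $t\le H^4$.

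One caution about that patch: as literally written ("Gronwall then reduces matters to bounding $\max_t\|\int_0^t\cdots\|$") the argument does not go through for $r=4$, because the max-form conclusion $\max_t\|y_h\|\le C\max_t F(t)$ is vacuous when your bound $F(t)\le CH^6t^{-1/4}$ blows up at $t=0$. The repair you sketch is sound but must be carried out as a genuine two-interval argument (or via Henry's lemma in its full form with locally integrable, non-constant forcing): first bound $\|y_h\|\le t^{3/2}\|e_h\|_1\le CH^{6}H^{5/2}$ on $[0,H^4]$ from Lemma~\ref{le:cota_pre_e1_second}, then run the Gronwall inequality on $[H^4,T]$, where the contribution of $[0,H^4]$ to the convolution integral is controlled by that crude bound and the forcing is now uniformly $O(H^5)$. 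With that spelled out your proof is correct and yields~(\ref{supercub_tg_1_second}); the paper's asymmetric weight-splitting simply buys you a cleaner, patch-free argument with bounded Beta integrals throughout.
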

\begin{proof}
Let $y_h(t)=t^{(r-1)/2}A_h^{1/2}e_h(t)$ and follow the arguments in the
proof of~Lemma~\ref{le:cota_pre_e1} so that now, instead of~(\ref{delema8}) we get
\begin{align}
\max_{0\le t\le T}\left\| y_h(t)\right\|_0 \le &C\biggl(\max_{0\le t\le T}
\biggl\|\int_0^t e^{-(t-s)A_h}s^{(r-1)/2} A_h^{1/2}\Pi_h\rho_{h,H}(s)\,ds \biggr\|_0
\nonumber\\
&{}+\frac{(r-1)}{2}
\biggl\|\int_0^t e^{-(t-s)A_h}s^{(r-3)/2} A_h^{1/2}e_h(s)\,ds \biggr\|_0\biggr).
\label{jul15}
\end{align}
Applying~(\ref{jul4b}) to bound the first term on the right-hand side above
and~(\ref{labeta}) for the second one, we have
\begin{align}
\max_{0\le t\le T}\left\| y_h(t)\right\|_0 \le &C\Bigl(T^{1/4}\max_{0\le t\le T}
\bigl\|t^{1/2}\epsilon_{h,H}(t)\bigr\|_1\bigl\|t^{(r-2)/2}\epsilon_{h,H}(t)\bigr\|_1
\nonumber
\\
&{}+C
B\left(\frac{1}{2},\frac{1}{2}\right)
\max_{0\le t\le T} \bigl\|t^{(r-2)/2} e_h(t)\bigr\|_0\Bigr).
\label{jul14}
\end{align}
Due to~(\ref{gal_cuad}) the first term on the right-hand side above is
bounded by $CH^2H^{r-1}=CH^{r+1}$. For $r=4$, the second one is
bounded in~Theorem~\ref{teo1_second}. When $r=3$, we may write
$\bigl\|t^{1/2}e_h(t)\bigr\|_0=\left\|te_h(t)\right\|_0^{1/2}\left\|e_h(t)\right\|_0^{1/2}$
so that applying~Theorem~\ref{teo1_second} and Lemma~\ref{le:cota_pre_e1_second}, the
second term on the right-hand side of~(\ref{jul14}) is bounded
by~$CH^{5/2}H^{7/4}=o(H^4)$
\end{proof}

\begin{remark}\label{re:logh}\rm
For $r=3$ it is possible to prove the bound
$$
\|u_{h}(t)-\tilde u_{h}(t)\|_{1} \le
 \frac{C}{t}\left(
 H^{9/2}\left|\log(h)\right|+H^{17/4}\right),\qquad t\in(0,T],\quad r=3.
$$
To do so, apply Lemma~\ref{l4b} and~(\ref{jul2}) to bound the first term on the right-hand
side of~(\ref{jul15}) and the same bound as before for the second term.
\end{remark}
Finally, repeating (with obvious changes) the analysis in Section~\ref{se:3} for the pressure, the following result is easily proved

\begin{Theorem}
\label{teo5_second}
Let $(u,p)$ be the solution
of {\rm (\ref{onetwo})--(\ref{ic})}. There exists a positive
constant $C$ such that the approximation to $p$ over the finer grid, $\tilde p_{h}$, satisfy the following bound
for $t\in(0,T]$ and $r=3,4$:
\begin{equation}
\|\tilde p_h(t)-p_h(t)\|_{L^2/{\Bbb R}}\le  \frac{C}{t^{(r-1)/2}}
 H^{r+1},\qquad t\in(0,T].
\end{equation}
\end{Theorem}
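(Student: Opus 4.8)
The plan is to follow the proof of Theorem~\ref{teo5} step by step, with the simplification that here $\tilde p_h$ is compared directly against the fine-mesh Galerkin pressure $p_h$, so no auxiliary pressure is needed. First I would subtract the Galerkin equations~(\ref{ten})--(\ref{ten2}) written over the fine mesh (i.e.\ with $H$ replaced by $h$) from~(\ref{second_1})--(\ref{second_2}), tested against an arbitrary $\phi_h\in X_{h,r}$. Writing $\tilde u_h=u_h-e_h$ and $u_H=u_h+\epsilon_{h,H}$ and collecting terms, the trilinear part $b(u_h,u_h,\phi_h)-b(u_H,\tilde u_h,\phi_h)-b(\tilde u_h,u_H,\phi_h)-b(u_H,u_H,\phi_h)$ reduces to $b(\epsilon_{h,H},\epsilon_{h,H},\phi_h)+b(u_H,e_h,\phi_h)+b(e_h,u_H,\phi_h)$, and since $b(\epsilon_{h,H},\epsilon_{h,H},\phi_h)=-(\rho_{h,H},\phi_h)$ this recovers the weak form of~(\ref{new_e}) together with the pressure-gradient term. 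Integrating the pressure term by parts and invoking the inf--sup condition~(\ref{lbbh}) then gives
\begin{align*}
\beta\|\tilde p_h(t)-p_h(t)\|_{L^2/{\Bbb R}}\le{}&\|\dot e_h(t)\|_{-1}+\|e_h(t)\|_1\\
&{}+\|F(u_H,e_h)\|_{-1}+\|F(e_h,u_H)\|_{-1}+\|\rho_{h,H}(t)\|_{-1},
\end{align*}
so it remains to bound all five terms by the target quantity $Ct^{-(r-1)/2}H^{r+1}$.

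The term $\|e_h\|_1$, which turns out to be the dominant one, is exactly Theorem~\ref{teo4_second}. For the two trilinear terms I would apply~(\ref{eq:lips1b}) with $u_H$ (which satisfies the threshold condition~(\ref{eq:threshold}) by the case $r=2$ of~(\ref{gal_cuad})) in the role of $w_{h_1}^1=w_{h_2}^2$ and $e_h$ in the role of $w_h$, obtaining $\|F(u_H,e_h)\|_{-1}+\|F(e_h,u_H)\|_{-1}\le C\|e_h\|_0$, which by Theorem~\ref{teo1_second} is $O(t^{-1}H^5)$ and hence absorbed once $H$ is small. For $\|\rho_{h,H}\|_{-1}=\|F(\epsilon_{h,H},\epsilon_{h,H})\|_{-1}$ I would argue by duality against $H^1_0$, using the skew-symmetry~(\ref{skew}) to transfer one derivative off $\epsilon_{h,H}$ and then the Sobolev interpolation underlying~(\ref{jul2}), to get $\|\rho_{h,H}\|_{-1}\le C\|\epsilon_{h,H}\|_0^{1/2}\|\epsilon_{h,H}\|_1^{3/2}$; inserting~(\ref{gal_cuad}) with $h$ replaced by $H$ (legitimate because $h<H$) gives $Ct^{-(r-2)/2}H^{(4r-3)/2}$, which for $r=3,4$ is dominated by $Ct^{-(r-1)/2}H^{r+1}$ when $H$ is small.

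The remaining ingredient is $\|\dot e_h\|_{-1}$, for which I would prove the analogue of Lemma~\ref{le:devt}: from~(\ref{new_e}) and $\|\dot e_h\|_{-1}\le\|A_h^{-1/2}\dot e_h\|_0$ one gets $\|\dot e_h\|_{-1}\le\|e_h\|_1+\|A_h^{-1/2}(B_h(u_H,e_h)+B_h(e_h,u_H))\|_0+\|A_h^{-1/2}\Pi_h\rho_{h,H}\|_0$, where the middle term is $\le C\|e_h\|_0$ by~(\ref{eq:lips1}) and the last is $\le C\|\epsilon_{h,H}\|_0^{1/2}\|\epsilon_{h,H}\|_1^{3/2}$ by~(\ref{jul2}); both were just shown to be $O(t^{-(r-1)/2}H^{r+1})$. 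Collecting the five estimates and dividing by $\beta$ completes the proof.

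The work should be essentially bookkeeping, with two mildly delicate points. The first is carrying out the algebra of the residual equation so that the quadratic Newton remainder in $\epsilon_{h,H}$ collapses cleanly into $-(\rho_{h,H},\phi_h)$, i.e.\ verifying consistency with~(\ref{new_e}). The second is checking that the $\|\rho_{h,H}\|_{-1}$, $\|\dot e_h\|_{-1}$ and $\|e_h\|_0$ contributions are indeed of order no larger than $t^{-(r-1)/2}H^{r+1}$ for both $r=3$ and $r=4$; this amounts to comparing powers of $H$ and of $t\le T$ and using $h<H$ together with the smallness of $H$ (equivalently $HT\le 1$). No estimate beyond those already available in Section~\ref{se:3} and in the present section is needed.
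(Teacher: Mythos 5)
Your proposal is correct and is essentially the argument the paper intends: for Theorem~\ref{teo5_second} the paper gives no details beyond saying that the pressure analysis of Section~\ref{se:3} (inf-sup condition~(\ref{lbbh}) plus a Lemma~\ref{le:devt}-type bound for $\dot e_h$) is repeated with obvious changes, and your use of Theorems~\ref{teo1_second} and~\ref{teo4_second}, Lemma~\ref{le:lips01} and the~(\ref{jul2})-type estimate for $\rho_{h,H}=-F(\epsilon_{h,H},\epsilon_{h,H})$ supplies exactly those changes, with $\|e_h\|_1$ as the dominant term. Only a harmless sign slip: the Newton residual combination is $b(u_h,u_h,\phi_h)-b(u_H,\tilde u_h,\phi_h)-b(\tilde u_h,u_H,\phi_h)+b(u_H,u_H,\phi_h)$ (plus sign on the last term), which indeed collapses to $b(\epsilon_{h,H},\epsilon_{h,H},\phi_h)+b(u_H,e_h,\phi_h)+b(e_h,u_H,\phi_h)$ consistently with~(\ref{new_e}), as you state.
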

We now summarize the main results of the section in the following theorem.

\begin{Theorem}
\label{teotodo2}
Let $(u,p)$ be the solution
of {\rm (\ref{onetwo})--(\ref{ic})}. There exists a positive
constant $C$ such that the approximations $(\tilde u_{h},\tilde p_h)$ satisfy the following bounds for $r=3,4$ and $t\in(0,T]$:
\begin{align*}
\|u(t)-\tilde u_h(t)\|_0\le& \frac{C}{t}H^5+\frac{C}{t^{(r-2)/2}}h^r.
\\
\|u(t)-\tilde u_h(t)\|_1\le&\frac{C}{t^{(r-1)/2}}H^{r+1}+\frac{C}{t^{(r-2)/2}}h^{r-1},
\\
\|\tilde p_h(t)-p(t)\|_{L^2/{\Bbb R}}\le&  \frac{C}{t^{(r-1)/2}}
 H^{r+1}+\frac{C}{t^{(r-2)/2}}h^{r-1}.
\end{align*}
\end{Theorem}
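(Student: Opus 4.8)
The plan is to obtain all three bounds by the triangle inequality, splitting off the standard mixed finite element (Galerkin) approximation $(u_h,p_h)$ over the fine grid, i.e.\ the solution of~(\ref{ten})--(\ref{ten2}) with $H$ replaced by~$h$. I would write $u-\tilde u_h=(u-u_h)+(u_h-\tilde u_h)$ and, likewise, $p-\tilde p_h=(p-p_h)+(p_h-\tilde p_h)$, and then estimate each summand separately in the norm of interest.

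For the ``Galerkin part'' I would quote the classical error estimates over the fine mesh of size~$h$: the bound (\ref{gal_cuad}) of Lemma~\ref{cotas_gal}, used with $h$ in the role of~$H$, gives $\|u-u_h\|_j\le C t^{-(r-2)/2}h^{r-j}$ for $j=0,1$ (the range $2\le r\le 5$ there covers $r=3,4$), while the pressure estimate (\ref{eq_lap_h}) of Lemma~\ref{lag_h} gives $\|p-p_h\|_{L^2/{\mathbb R}}\le C t^{-(r-2)/2}h^{r-1}$.

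For the ``two-grid correction'' I would simply invoke the theorems proved earlier in this section: Theorem~\ref{teo1_second} for $\|u_h-\tilde u_h\|_0\le C t^{-1}H^5$, Theorem~\ref{teo4_second} for $\|u_h-\tilde u_h\|_1\le C t^{-(r-1)/2}H^{r+1}$, and Theorem~\ref{teo5_second} for $\|p_h-\tilde p_h\|_{L^2/{\mathbb R}}\le C t^{-(r-1)/2}H^{r+1}$. Adding the two contributions in each of the three norms reproduces the displayed inequalities verbatim, which finishes the proof.

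In other words, this statement is a bookkeeping corollary and I do not expect any genuine obstacle in its proof itself: all the substantive work lies in Lemmas~\ref{le:cota_pre_e1_second}--\ref{le:cota_pre_e1_second_jul} and Theorems~\ref{teo1_second}--\ref{teo5_second}, whose proofs in turn rest on the semigroup-and-Gronwall technique of Section~\ref{se:3} combined with the superconvergence estimates (\ref{gal_cuad})--(\ref{gal_int}) and the auxiliary bounds (\ref{jul2})--(\ref{jul4b}) for the quadratic nonlinearity $\rho_{h,H}=-F(\epsilon_{h,H},\epsilon_{h,H})$. The only point requiring a little care is to estimate the Galerkin term with its \emph{optimal} power of~$h$ (namely $h^{r-j}$ for the velocity and $h^{r-1}$ for the pressure, rather than the cruder $H^1$-projection rates), which is precisely what (\ref{gal_cuad}) and (\ref{eq_lap_h}) deliver.
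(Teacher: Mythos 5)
Your proposal is correct and coincides with the paper's own proof: the same decompositions $u-\tilde u_h=(u-u_h)+(u_h-\tilde u_h)$ and $p-\tilde p_h=(p-p_h)+(p_h-\tilde p_h)$ are used, with (\ref{gal_cuad}) and (\ref{eq_lap_h}) (on the fine mesh) for the Galerkin terms and Theorems~\ref{teo1_second}, \ref{teo4_second} and~\ref{teo5_second} for the two-grid corrections. Nothing essential differs from the paper's argument.
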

\begin{proof}
We use the decomposition $u-\tilde u=(u-u_h)+(u_h-\tilde u)$ and apply (\ref{gal_cuad}) to bound the first term and
Theorems~\ref{teo1_second} and~\ref{teo4_second} for the second. For the pressure, using the
decomposition $p-\tilde p_h=(p-p_h)+(p_h-\tilde p_h)$ and applying (\ref{eq_lap_h}) and Theorem~\ref{teo5_second} the proof is finished.
\end{proof}
Finally, with a much simpler analysis, that we do not detail here for brevity, the following result can be proved

\begin{Theorem}
\label{teomini_second}
Let $(u,p)$ be the solution
of {\rm (\ref{onetwo})--(\ref{ic})}. There exists a positive
constant $C$ such that the approximations over the finer grid computed
using the mini-element, $(\tilde u_h,\tilde p_{h})$, satisfy the following bounds
for $t\in(0,T]$:
\begin{eqnarray*}
\|\tilde u_h(t)-u(t)\|_0&\le& C H^3+C h^2,\nonumber\\
\|\tilde u_h(t)-u(t)\|_1&\le&C H^{2}+C h,\nonumber\\
\|\tilde p_h(t)-p(t)\|_{L^2/{\Bbb R}}&\le& {C}H^{2}+C h.
\end{eqnarray*}
\end{Theorem}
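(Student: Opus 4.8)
The statement for the mini-element (Theorem~\ref{teomini_second}) should follow the same architecture as the Hood--Taylor analysis of the second algorithm, but with $r=2$ and with all the ``super-convergence'' phenomena downgraded accordingly. The first step is to record the analogue of Lemma~\ref{cotas_gal} for the mini-element: the standard MFE approximation $(u_h,p_h)$ to the velocity satisfies $\|u(t)-u_h(t)\|_j\le C t^{(j-2)/2}h^{2-j}$ for $j=0,1$, $\|p(t)-p_h(t)\|_{L^2/\mathbb R}\le C t^{-1/2}h$, together with the integrated bound $\int_0^t\|u(s)-u_h(s)\|_j^2\,ds\le Ch^{2(2-j)}$; these are exactly the $r=2$ instances of (\ref{gal_cuad}), (\ref{eq_lap_h}) and (\ref{gal_int}) (the latter is still valid for $r=2$ by the references cited there, Heywood--Rannacher). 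Note that there is \emph{no} super-convergent estimate of the type (\ref{supercuad}) at $r=2$, which is why the bounds in Theorem~\ref{teomini_second} are only $O(H^2)$ and $O(H^3)$ rather than $O(H^3)$, $O(H^4)$.

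\medskip
\noindent The second step is to run the $e_h=u_h-\tilde u_h$ analysis of this section verbatim, tracking the powers of $H$. Equation (\ref{new_e}) still holds with $\rho_{h,H}=-F(\epsilon_{h,H},\epsilon_{h,H})$ and $\epsilon_{h,H}=u_H-u_h$; subtracting the mini-element Galerkin equations on the coarse and fine meshes and using $\|u-u_H\|_j\le Ct^{(j-2)/2}H^{2-j}$ and $\|u-u_h\|_j\le Ct^{(j-2)/2}h^{2-j}$ gives $\|\epsilon_{h,H}\|_j\le Ct^{(j-2)/2}H^{2-j}$. The chain Lemma~\ref{le:cota_pre_e1_second} $\to$ Lemma~\ref{le:aux2_second} $\to$ Lemma~\ref{le:cota_pre_e1_second_jul} $\to$ Theorem~\ref{teo1_second} $\to$ Theorem~\ref{teo4_second} then reproduces with $H^{3-l}$, $H^4$, $H^4$, $H^{?}$ in place of the Hood--Taylor exponents; concretely, the quadratic nonlinearity contributes $\|\epsilon_{h,H}\|_0\|\epsilon_{h,H}\|_1$-type products that are $O(H^2\cdot H)=O(H^3)$ in $L^2$ after the $A_h^{-1}$ smoothing (via (\ref{jul3})) and $O(H^2)$ in $H^1$ (via (\ref{jul4b}) and $\|\epsilon_{h,H}\|_1^2=O(H^2)$), and the extra Newton terms $B_h(e_h,u_H)$ cost at most a factor $\|u_H\|_{W^{1,\infty}}$ or, when integrated, $\|u-u_H\|_1=O(H)$, which never dominates. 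The upshot is $\|u_h-\tilde u_h\|_0\le Ct^{-1}H^3$ and $\|u_h-\tilde u_h\|_1\le Ct^{-1}H^2$, and by the inf-sup argument of Theorem~\ref{teo5_second}, $\|p_h-\tilde p_h\|_{L^2/\mathbb R}\le Ct^{-1}H^2$ as well (here one also needs the $r=2$ analogue of Lemma~\ref{le:devt}, $\|\dot e_h\|_{-1}\le C\|e_h\|_1+C\|e_h\|_0+\|A_h^{-1/2}\Pi_h\rho_{h,H}\|_0$, all of which are $O(H^2)$).

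\medskip
\noindent The third and final step is the triangle inequality, exactly as in the proof of Theorem~\ref{teotodo2}: write $u-\tilde u_h=(u-u_h)+(u_h-\tilde u_h)$ and $p-\tilde p_h=(p-p_h)+(p_h-\tilde p_h)$, bound the Galerkin errors $u-u_h$, $p-p_h$ by the mini-element estimates $Ch^2$, $Ch$, $Ch$ (absorbing the $t$-singularities into the generic constant for $t\in(0,T]$, or displaying them; the statement as written suppresses them), and bound the two-grid errors $u_h-\tilde u_h$, $p_h-\tilde p_h$ by the $O(H^3)$, $O(H^2)$, $O(H^2)$ bounds just obtained. This yields precisely the three inequalities of Theorem~\ref{teomini_second}.

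\medskip
\noindent \textbf{Main obstacle.} The delicate point is the $L^2$-norm velocity bound $\|u-\tilde u_h\|_0\le CH^3+Ch^2$: one must genuinely gain a full power of $H$ over the naive $O(H^2)$ (which is all $\|\epsilon_{h,H}\|_0$ gives) by exploiting the $A_h^{-1}$-type negative-norm smoothing of the semigroup against the quadratic residual, i.e.\ the mini-element analogues of Lemmas~\ref{le:aux2_second} and~\ref{le:cota_pre_e1_second_jul}, where one has to verify that the integrated estimate (\ref{gal_int}) with $h\rightsquigarrow H$ is available at $r=2$ and that the duality arguments behind (\ref{jul3})--(\ref{jul4}) still close without the higher regularity of $u$ used in the cubic case. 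Everything else is a bookkeeping exercise in degrading exponents.
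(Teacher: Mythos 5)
The paper itself does not detail a proof of Theorem~\ref{teomini_second} (it only says the result follows from ``a much simpler analysis''), and your overall strategy---rerun the second-algorithm analysis for $e_h=u_h-\tilde u_h$ with $r=2$, use the quadratic residual $\rho_{h,H}=-F(\epsilon_{h,H},\epsilon_{h,H})$ with $\|\epsilon_{h,H}\|_0\|\epsilon_{h,H}\|_1=O(H^3)$ and $\|\epsilon_{h,H}\|_1^2=O(H^2)$ via (\ref{jul3})--(\ref{jul4b}), then conclude by the triangle inequality with the mini-element Galerkin bounds---is exactly the intended route and gives the right powers of $H$ and $h$.

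There is, however, a genuine flaw in your handling of the $t$-dependence. You quote the $r=2$ Galerkin bounds as $\|u(t)-u_h(t)\|_j\le Ct^{(j-2)/2}h^{2-j}$ and $\|p(t)-p_h(t)\|_{L^2/\mathbb R}\le Ct^{-1/2}h$, but the singular factor in (\ref{gal_cuad}) and (\ref{eq_lap_h}) is $t^{-(r-2)/2}$, which equals $1$ for $r=2$: the mini-element Galerkin errors (and hence $\|\epsilon_{h,H}\|_j\le CH^{2-j}$) are uniform down to $t=0$ under the standing regularity assumptions. Having inserted spurious singular weights, you end up with intermediate bounds like $\|u_h-\tilde u_h\|_0\le Ct^{-1}H^3$ and then propose to ``absorb the $t$-singularities into the generic constant for $t\in(0,T]$''; this is not legitimate ($t^{-1}$ is unbounded on $(0,T]$), and since Theorem~\ref{teomini_second} asserts constants uniform in $t$, the proof as written does not establish the statement near $t=0$. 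The correct (and simpler) resolution is to drop the $t$-weights altogether: write the Duhamel formula for (\ref{new_e}), bound the linear Newton terms by $\|e^{-(t-s)A_h}B_h(u_H,e_h)\|_0\le C(t-s)^{-1/2}\|e_h\|_0$ (and similarly with $A_h^{1/2}$ for the $H^1$ bound) using Lemma~\ref{le:lips01} under the threshold condition, bound the residual term by $C(t-s)^{-3/4}\|\epsilon_{h,H}\|_0\|\epsilon_{h,H}\|_1\le C(t-s)^{-3/4}H^3$ via (\ref{jul4}) (respectively $C(t-s)^{-3/4}H^2$ via (\ref{jul4b})), and apply the generalized Gronwall lemma. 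This yields $\max_{0\le t\le T}\|e_h\|_0\le CH^3$ and $\max_{0\le t\le T}\|e_h\|_1\le CH^2$ with uniform constants, makes the negative-norm bootstrap chain (your analogues of Lemmas~\ref{le:cota_pre_e1_second}--\ref{le:cota_pre_e1_second_jul}, and the $r=2$ version of (\ref{gal_int})) unnecessary, and the pressure bound then follows from the inf-sup argument of Theorem~\ref{teo5_second} exactly as you indicate. With that repair your argument is complete.
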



\section{Fully discrete case.}
In this section we consider the fully discrete case. Let us assume that we integrate in time
equations (\ref{ten}-\ref{ten2}) and (\ref{ten_p1}-\ref{ten2_p1}) for the first method
or equations (\ref{ten}-\ref{ten2}) and (\ref{second_1}-\ref{second_2}) for the second method
using the backward Euler method or the two-step backward differentiation formula (BDF). In the case of the two-step BDF method
the first step is carried out using the backward Euler method.  We will
denote by ($U_H^n,P_H^n)$ the fully discrete Galerkin approximations to the velocity and
pressure at the time level $t_n=nk$ for $0\le n\le N$
and $k=\Delta t=T/N$. We will denote by $(\widetilde U_h^n,\widetilde P_h^n)$ the fully discrete approximations
to the velocity and pressure over the finer grid at the time level $t_n$.

Let us denote by $e_H^n= u_H(t_n)-U_H^n$ and by $\tilde e_h^n=\tilde u_h(t_n)-\widetilde U_h^n$
the temporal errors in the approximations $U_H^n$ and $\widetilde U_h^n$ respectively. Let
us denote by $\pi_H^n=p_H(t_n)-P_H^n$ and by $\tilde \pi_h^n=\tilde p_h(t_n)-\widetilde P_h^n$ the
temporal errors in the approximations $P_H^n$ and $\widetilde P_H^n$ respectively. In \cite{jbj_fully} we have proved the
following error bounds. There exist  constants $C_{l_0}$ and $k_0$ such that for $k\le k_0$ the temporal
errors of the Galerkin approximation satisfy the following error bounds
$$
\|e_H^n\|_0+t_n\|A_H e_H^n\|_0\le C_{l_0}\frac{k^{l_0}}{t_n^{l_0-1}},\ \|\pi_H^n\|_{L^2(\Omega)/{\Bbb R}}\le C_{l_0}\frac{k^{l_0}}{t_n^{(2l_0-1)/2}},\ 1\le n\le N,
$$
where $l_0=1$ for the Euler method and $l_0=2$ for the two-step BDF. Let us remark that using
$\|e_H^n\|_1\le C\|A_H^{1/2} e_H^n\|_0\le C \|e_H^n\|_0^{1/2}\|A_H e_H^n\|_0^{1/2}$ error bounds in the
$H^1$ norm are also obtained in a straightforward manner.

Using the same technique developed in \cite{jbj_fully} it can also be proved that analogous error bounds hold for the approximations
over the finer grid. More precisely, for both the first and second algorithms there exist  constants $C_{l_0}$ and $k_0$ such that for $k\le k_0$ the temporal
errors of the two-grid approximation satisfy the following error bounds
\begin{eqnarray*}
\|\tilde e_h^n\|_0+t_n\|A_h \tilde e_h^n\|_0\le C_{l_0}\frac{k^{l_0}}{t_n^{l_0-1}},\ \|\tilde \pi_h^n\|_{L^2(\Omega)/{\Bbb R}}\le C_{l_0}\frac{k^{l_0}}{t_n^{(2l_0-1)/2}},\ 1\le n\le N,
\end{eqnarray*}
where $l_0=1$ for the Euler method and $l_0=2$ for the two-step BDF.

Finally, using the decompositions
\begin{eqnarray*}
u(t_n)-\widetilde U_h^n&=&(u(t_n)-\tilde u_h(t_n))+\tilde e_h^n,\nonumber\\
p(t_n)-\widetilde P_h^n&=&(p(t_n)-\tilde p_h(t_n))+\tilde \pi_h^n,
\end{eqnarray*}
the error bounds of the fully discrete approximations are obtained as the sum of the spatial errors (the errors in the semi-discrete approximations
we have already bounded in the previous sections)
plus the temporal errors.

\end{document}